\numberwithin{equation}{section}
\newtheorem{theorem}{Theorem}[section]
\theoremstyle{plain}
\newtheorem{corollary}[theorem]{Corollary}
\newtheorem{corollary-definition}[theorem]{Corollary-Definition}
\newtheorem{definition}[theorem]{Definition}
\newtheorem{lemma}[theorem]{Lemma}
\newtheorem{proposition}[theorem]{Proposition}
\numberwithin{equation}{section}
\theoremstyle{definition}
\newtheorem{example}[theorem]{Example}
\newtheorem{remark}[theorem]{Remark}
\newcommand{\Ker}{\mathrm{Ker}\,}
\newcommand{\Hom}{\mathrm{Hom}}
\newcommand{\Ext}{\mathrm{Ext}}
\newcommand{\Z}{\mathbb{Z}}
\newcommand{\Q}{\mathbb{Q}}
\newcommand{\Mod}{\mathrm{Mod}}
\newcommand{\Modst}{\underline{\mathrm{Mod}}}
\newcommand{\h}{\underline{h}}
\newcounter{hours}
\newcounter{minutes}
\begin{document}
\title{The stable category of a left hereditary ring}

\author{Alex Martsinkovsky}
\address{Mathematics Department\\
Northeastern University\\
Boston, MA 02115, USA}
\email{alexmart@neu.edu}
\author{Dali Zangurashvili}
\address{Andrea Razmadze Mathematical Institute\\
Tbilisi Centre for Mathematical Sciences, 6 Tamarashvili St.\\
Tbilisi, 0177, Georgia}
\email{dalizan@rmi.ge}
\thanks{The first author was supported by the Collaborative Research Centre 701 "Spectral Structures and Topological Methods in Mathematics" at the University of Bielefeld during his visit in May - July of 2014. He thanks the University of Bielefeld for providing ideal conditions for work.}
\thanks{A major part of this paper was prepared during the second author's visit to Northeastern University (USA) in October-November 2012 under the financial support from the Short-Term Individual Travel Grant from Shota Rustaveli National Science Foundation (Ref. 03/109), which she gratefully acknowledges. She also gratefully acknowledges the Research Grant  DI/ 18/5-113/13 from the same foundation and the  Volkswagen Foundation's Research Grant (Ref. 85 989).}

\date{\today, \setcounter{hours}{\time/60} \setcounter{minutes}{\time-\value
{hours}*60} \thehours\,h\ \theminutes\,min}
\subjclass[2010]{Primary: 16D90; Secondary: 16E60, 18E10}
\keywords{left hereditary ring, stable module category, normal monomorphism, normal epimorphism, abelian category, adjoint functor, factorization system}

\begin{abstract}
The (co)completeness problem for the (projectively) stable module category of an associative ring is studied. (Normal) monomorphisms and (normal) epimorphisms in such a category are characterized. As an application, we give a criterion for the stable category of a left hereditary ring to be abelian. By a structure theorem of Colby-Rutter, this leads to an explicit description of all such rings.
\end{abstract}
\maketitle
\tableofcontents
\section{Introduction}

Throughout the paper, a ring will always be an associative ring with identity, and a module a left module. Given a ring, the corresponding stable module category has modules as objects, while its morphisms are equivalence classes of module homomorphisms modulo homomorphisms factoring through projectives. This category was introduced by Eckmann and Hilton in the 1950s, with a goal to build an algebraic ``prototype'' for duality in homotopy theory~(\cite[Ch. 13]{H}). Soon it found uses in modular representation theory of finite groups and, eventually, in representation theory of artin algebras.

One may think that the stable category is not abelian (and this is sometimes claimed in the folklore), but if $\Lambda$ is semisimple, then the corresponding stable category consists of zero objects only and is thus trivially abelian. There now arises a natural question of whether there is a nontrivial example of an abelian stable category. Notice that if we are allowed to replace the category of all modules by a subcategory, then it is not difficult  to come up with such an example. It is easy to see that the trivial group is the only projective in the abelian category of finite abelian groups, and therefore the corresponding stable category is isomorphic to the original category.
%
%
Thus, we keep the entire module category in the statement of the problem and assume that the ring $\Lambda$ is not semisimple. 

Our first nontrivial example of an abelian stable category comes from representations of quivers, more precisely, those of $A_{2}$:
\[
\xymatrix
	{
	1 \ar[r] & 2
	}
\]
Let $k$ be a field. The path algebra $kA_{2}$ is isomorphic to the algebra of triangular $2 \times 2$ matrices with entries in $k$.  Let $P_{i}$ and $ S_{i}$ denote, respectively, the projective and the simple module corresponding to the vertex $i = 1,2$. It is well-known that $kA_{2}$ is of finite representation type, with only indecomposables being $P_{2} = S_{2}$, $P_{1}$, and $S_{1}$. Passing to the quotient modulo projectives, we have that, up to isomorphism,  $S_{1}$ is the only nonzero indecomposable module in the stable category of \texttt{finitely generated} $kA_{2}$-modules, making that category equivalent to the category of finite-dimensional $k$-vector spaces, and hence abelian. To describe the stable category of the entire category of $kA_{2}$-modules, we recall (\cite[Corollary 4.4]{RT}) that, for algebras of finite representation type, each module, finitely generated or not, is a direct sum of indecomposables. It now follows that the stable category of all $kA_{2}$-modules is equivalent to the abelian category of all $k$-vector spaces.

The above example fits  a more general pattern. Let $A_{n}$, 
$n \geq 2$ be the equioriented quiver
\[
\xymatrix
	{
	1 \ar[r] & 2 \ar[r] & 3 \ar[r] & \ldots \ar[r] & n
	}
\] 
By analyzing the Auslander-Reiten quiver of $kA_{n}$, one can see that the stable category of finitely generated $kA_{n}$-modules is equivalent to the category of finitely generated modules over $kA_{n-1}$, and is thus abelian. Since $kA_{n}$ is also of finite representation type, we can use~\cite[Corollary 4.4]{RT} again and deduce that the stable category of all $kA_{n}$-modules is equivalent to the abelian category of all $kA_{n-1}$-modules.

With the motivating examples above, our goal in this paper is to 
determine when the stable category is abelian. In the case of a left hereditary ring, we give a complete answer.  Our main result, Theorem~\ref{main}, says that the stable category of such a ring is abelian if and only if the injective envelope of the ring viewed as a left module over itself is projective. By a structure theorem of Colby-Rutter, these are precisely finite direct products of complete blocked triangular matrix algebras over division rings.

The paper is organized as follows. In Section~\ref{Notation}, we set up notation and recall  basic facts about stable categories. 

In Section~\ref{Quotient}, we deal with the quotient functor from modules to modules modulo projectives. In particular, we characterize rings over which the quotient functor has left or right adjoints. 

In Section~\ref{StableMonos}, we characterize monomorphisms in stable categories. The case of a left hereditary ring is considered in more detail, which leads to seven new characterizations of such rings. As a consequence, we have that the stable category of a left hereditary ring is finitely complete. 

Section~\ref{StableEpis} deals with epimorphisms in stable categories. Theorem~\ref{char-epi} characterizes epimorphisms in stable categories in terms of null-homotopy of chain maps associated with certain pushouts. For left hereditary rings, we provide yet another criterion for a homomorphism to give rise to an  epimorphism in the stable category. For left hereditary rings, Theorem~\ref{iso}  gives a new necessary and sufficient condition for a homomorphism to represent an isomorphism in the stable category. Unlike Heller's general criterion, our condition is formulated in terms of submodules, rather than overmodules.  

In Section~\ref{epis-and-torsion}, we show that the study of  epimorphisms in the stable category reduces, to a large extent, ``modulo torsion''. Also, at the end of the section, we show that the full subcategory determined by the torsionfree modules is reflective in the stable category of the ring.

In Section~\ref{NormalMonos}, we study normal monomorphisms in the stable category. In particular, we show (Theorem~\ref{normal}) that all monomorphisms in the stable category of a left hereditary ring are normal if and only if the injective envelope of the ring, viewed as a left module over itself, is projective. As a consequence, the stable category of such a ring is well-powered. 

Section~\ref{NormalEpis} deals with normal epimorphisms. Compared with all epimorphisms, they give rise to null-homotopies of chain maps associated with certain additional pushouts (Proposition~\ref{pushout-split}). Over left hereditary rings, the existence of such null-homotopies  implies the normality (Theorem~\ref{normal-epi}). Lemma~\ref{conormal} shows that if the injective envelope of a left hereditary ring is projective, then the corresponding stable category is conormal. To determine when the converse is true, we show (Theorem~\ref{existence}) that when a left hereditary ring has the DCC on direct summands of itself and has a non-projective injective envelope, then there exists a nonzero projective module with a stable injective envelope; this construction gives rise to a bimorphism in the stable category which is not an isomorphism. At the end of the section, we briefly mention factorization systems, and give a necessary condition for the stable category of a left hereditary ring to admit epi-mono factorizations. 

In the last Section~\ref{Cokernels}, we show that if the injective envelope of a left hereditary ring is projective, then the corresponding stable category is cocomplete. This leads to  Theorem~\ref{main}, the main result of the paper. 


Since this paper straddles the area between module theory and category theory, a substantial effort has been made to present the results in a way accessible to a wide audience. Yet, if the reader needs more background from category theory, we recommend~\cite{AHS}, \cite{B}, and~\cite{M}. For a concise and focused treatment of ring- and module-theoretic concepts, the reader is referred to~\cite{AF}.

The authors thank Kiyoshi Igusa for a helpful comment on the above examples. 
Special thanks go to Oana Veliche who, carefully read the initial drafts of the paper and helped improve its readability. The second author expresses her sincere gratitude to Alex Martsinkovsky and Oana Veliche for their extraordinary kindness and hospitality during her visit to Northeastern University. 

\section{Notation and preliminaries}\label{Notation}

\textbf{Blanket assumptions}. As we mentioned in the introduction, throughout this paper, a ring means an associative ring with identity, and a module is a left module. The symbol 
$\mathrm{ker} f$ (respectively, $\mathrm{coker} f$) will denote the kernel (respectively, the cokernel) of the morphism $f$. The symbol $\mathrm{Ker} f$ (respectively, 
$\mathrm{Coker} f$) will denote the domain (respectively, the codomain) of the kernel (respectively, of the cokernel) of $f$. Also, a (co)limit in a category will always mean a small (co)limit. 
\medskip

Given a ring $\Lambda$, the category of left $\Lambda$-modules is denoted by  $\Lambda$-$\Mod$, and the corresponding stable category is denoted by $\Lambda$-$\Modst$, with the set of morphisms customarily denoted by $\underline{\Hom}(A,B)$, or simply $(\underline{A,B})$,  for any $\Lambda$-modules $A$ and $B$.  We shall say that homomorphisms $f$ and $g$ with the same domains and codomains are equivalent if $f-g$ factors through a projective. The morphism in the stable category represented by $f$ will be denoted by $\underline{f}$. Thus, 
$\underline{f} = 0$ in the stable category if and only if $f$ factors through a projective. On a rare occasion, we shall write $\underline{A}$ to indicate that the module $A$ is viewed as an object of the stable category.

The stable category is additive, with the biproduct induced by that in the original module category. The quotient functor 
 \[
 \mathcal{Q} : \Lambda\text{-}\mathrm{Mod} \to \Lambda\text{-}\underline{\mathrm{Mod}},
 \]
 defined tautologically on objects and by the quotient map on  morphisms, is clearly additive, full, and dense. It  has the following universal property. If~$\mathbf{A}$ is an additive category and $\mathscr{F} : \Lambda\text{-}\mathrm{Mod} \to \mathbf{A}$ is an additive functor vanishing on projectives, then there is a unique (up to isomorphism) additive functor 
 $\mathscr{G} : \Lambda\text{-}\underline{\mathrm{Mod}} \to \mathbf{A}$ making the diagram
\[
\xymatrix
	{
	\Lambda\text{-}\mathrm{Mod} \ar[r]^{\mathcal{Q}} 
	\ar[d]_{\mathcal{F}}
	& \Lambda\text{-}\underline{\mathrm{Mod}} 				\ar@{.>}[dl]^{\mathcal{G}}
\\
	\mathbf{A}
	&
	}
\] 
commute (up to isomorphism).
\smallskip

For any $\Lambda$-module $X$, we set $h^{\underline{X}} := (\underline{X,-})$ and $h_{\underline{X}} := (\underline{-, X})$, and view these as additive functors from the stable category of $\Lambda$-modules to the category of abelian groups. For a homomorphism $f : X \to Y$ of $\Lambda$-modules, we set $h^{\underline{f}} := (\underline{f,-})$ and $h_{\underline{f}} := (\underline{-, f})$. Composing these functors and natural transformations with~$\mathcal{Q}$, we define $\h^{X} := h^{\underline{X}}\mathcal{Q}$, 
$\h_{X} := h_{\underline{X}}\mathcal{Q}$, $\h^{f} := h^{f}\mathcal{Q}$, and 
$\h_{f} := h_{f}\mathcal{Q}$. These are additive functors from 
$\Lambda$-modules to abelian groups and, respectively, natural transformations between such functors. To simplify the notation, we drop the functor $\mathcal{Q}$ from the above definitions and, in an abused notation, simply write:

\begin{itemize}
\item[(i)] $\h^{X} = h^{\underline{X}}$ and $\h_{X} = h_{\underline{X}}$;
\smallskip
\label{underline}
\item[(ii)] $\h^{f}= h^{\underline{f}}$ and $\h_{f}= h_{\underline{f}}$,
\end{itemize}
\smallskip

To check whether a module is a zero object in the stable category, we have the following well-known and easily verified criteria.

\begin{lemma}\label{criteria}
 Let $A$ be a $\Lambda$-module. The following are equivalent:
 
\begin{enumerate}
  \item $A$ is a zero object in the stable category;
  \smallskip
  \item $A$ is projective;
  \smallskip
  \item $\h^{A} = 0$;
  \smallskip
  \item $\h_{A} = 0$. \qed
\end{enumerate} 
\end{lemma}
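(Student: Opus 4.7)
The plan is to prove the four conditions equivalent by establishing the cycle $(1) \Leftrightarrow (2)$, then the trivial implications $(2) \Rightarrow (3)$ and $(2) \Rightarrow (4)$, and finally the two converses $(3) \Rightarrow (2)$ and $(4) \Rightarrow (2)$, which both use the same idempotent-splitting trick.

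For $(1) \Leftrightarrow (2)$, I would unwind definitions: $A$ is a zero object in $\Lambda$-$\Modst$ precisely when $\underline{\id_A} = 0$, that is, when $\id_A$ factors through a projective module $P$. If $\id_A = \beta\alpha$ with $\alpha : A \to P$ and $\beta : P \to A$, then $A$ is a direct summand of $P$, hence projective. Conversely, if $A$ is projective, then $\id_A$ trivially factors through $A$ itself, giving $\underline{\id_A} = 0$, so $A \cong 0$ in the stable category. The implications $(2) \Rightarrow (3)$ and $(2) \Rightarrow (4)$ are then immediate: any morphism with domain or codomain a projective factors through a projective (take the identity on that projective), so its stable class is zero.

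For $(3) \Rightarrow (2)$, I would evaluate the vanishing functor $\underline{h}^A$ at $A$ itself, obtaining $\underline{\Hom}(A,A) = 0$, which forces $\underline{\id_A} = 0$, and we are back in the situation of $(1) \Rightarrow (2)$. The argument for $(4) \Rightarrow (2)$ is entirely symmetric: evaluating $\underline{h}_A$ at $A$ gives $\underline{\Hom}(A,A) = 0$, whence again $\underline{\id_A} = 0$ and $A$ is projective.

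There is no real obstacle here; the only subtlety worth mentioning is the standard fact that a direct summand of a projective module is projective, which is what converts ``$\id_A$ factors through a projective'' into ``$A$ is projective.'' Everything else is formal manipulation with the quotient functor and the Yoneda-type functors $\underline{h}^A$ and $\underline{h}_A$.
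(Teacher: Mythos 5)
Your argument is correct and is exactly the standard verification that the paper omits (the lemma is stated as ``well-known and easily verified'' with no proof given): identifying zero objects with objects whose identity vanishes, converting ``$\id_{A}$ factors through a projective'' into ``$A$ is a summand of a projective, hence projective,'' and recovering $\underline{\id_{A}}=0$ from either vanishing Hom-functor by evaluating at $A$. Nothing is missing.
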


Another well-known and useful fact is given by

\begin{lemma}
The isomorphism class of any morphism in the stable category contains a morphism that can be represented by an epimorphism in $\Lambda$-$\Mod$.
\end{lemma}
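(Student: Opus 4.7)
The plan is to show that any morphism $\underline{f} : \underline{A} \to \underline{B}$ in the stable category is isomorphic, as an object of the arrow category of $\Lambda\text{-}\Modst$, to one represented by an epimorphism in $\Lambda\text{-}\Mod$. The standard trick is to enlarge the source by a free module mapping onto the target, and observe that this enlargement is invisible in the stable category.

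Concretely, I would pick a representative $f : A \to B$ of the given class, choose any projective module $P$ together with a surjection $\pi : P \twoheadrightarrow B$ (for instance, a free cover), and form the homomorphism
\[
g := [\,f, \pi\,] : A \oplus P \longrightarrow B.
\]
Since $\pi$ is surjective, so is $g$, hence $g$ is an epimorphism in $\Lambda\text{-}\Mod$. It remains to verify that $\underline{g}$ and $\underline{f}$ are isomorphic morphisms in $\Lambda\text{-}\Modst$, in the sense that they fit into a commutative square whose vertical arrows are isomorphisms in the stable category.

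For this step I would invoke Lemma~\ref{criteria}: since $P$ is projective, $\underline{P}$ is a zero object in $\Lambda\text{-}\Modst$. Because the stable category is additive and its biproducts are inherited from $\Lambda\text{-}\Mod$, the canonical inclusion $\iota_A : A \to A \oplus P$ becomes an isomorphism $\underline{\iota_A}$ in $\Lambda\text{-}\Modst$, with inverse $\underline{\pi_A}$. The identity $g \circ \iota_A = f$ in $\Lambda\text{-}\Mod$ descends to $\underline{g}\circ\underline{\iota_A} = \underline{f}$, which (taking the identity as the vertical arrow on the target side) exhibits $\underline{f}$ and $\underline{g}$ as isomorphic morphisms in the arrow category of $\Lambda\text{-}\Modst$.

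There is no real obstacle here; the only thing to be careful about is pinning down what ``isomorphism class of a morphism'' means, namely isomorphism in the arrow category, and then the direct-sum-with-a-projective construction does all the work. The argument relies only on additivity of the stable category and the fact, already recorded in Lemma~\ref{criteria}, that projectives are the zero objects of $\Lambda\text{-}\Modst$.
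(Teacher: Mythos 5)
Your proposal is correct and is essentially the paper's own argument: both take an epimorphism $\pi : P \twoheadrightarrow B$ from a projective and replace $f$ by $f \perp \pi : A \oplus P \to B$, which is surjective and agrees with $\underline{f}$ up to the stable isomorphism $\underline{\iota_A}$. You merely spell out the arrow-category verification that the paper leaves implicit.
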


\begin{proof}
 Let $f : M \to N$ be a homomorphism of $\Lambda$-modules. If 
$g : P \to N$ is an epimorphism with $P$ projective, then 
$f \bot g : M \oplus P \to N$ is the desired homomorphism.
\end{proof}

To test whether a homomorphism $f$ gives rise to an isomorphism in the stable category, we have the following criterion of Heller~\cite[Theorem 2.2]{H2}.

\begin{theorem}\label{heller}
Let $f : A \to B$ be a homomorphism of $\Lambda$-modules. Then  $\underline{f}$ is an isomorphism if and only if there are projective modules $P$ and $Q$ and an isomorphism $\tilde{f}:A\oplus P \rightarrow B\oplus Q$ making the following diagram

\begin{equation}
\begin{gathered}
 \xymatrix{A\oplus P\ar[r]^{\tilde{f}}&B\oplus Q\ar[d]^{p}\\
A\ar[u]^{i}\ar[r]^{f}&B} 
\end{gathered} 
\end{equation}
where $i$ and $p$ are, respectively, the canonical inclusion and the canonical projection, commute. \qed
\end{theorem}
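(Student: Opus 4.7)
The easy direction $(\Leftarrow)$ is almost immediate. The canonical inclusion $i:A\to A\oplus P$ and canonical projection $p:B\oplus Q\to B$ both represent isomorphisms in the stable category: indeed $\pi_A i = 1_A$ while $1_{A\oplus P}-i\pi_A$ is the idempotent onto the projective summand $P$ and therefore factors through a projective, and symmetrically for $p$. Since $\tilde f$ is an isomorphism of modules, the identity $f=p\tilde f i$ exhibits $\underline f$ as a composite of three isomorphisms in $\Lambda$-$\Modst$.

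For $(\Rightarrow)$ the plan is threefold: first reduce $f$ to a surjection, then prove that a surjection representing a stable isomorphism must split with projective kernel, and finally read off $\tilde f$ from that splitting. Pick an epimorphism $g_0:R\twoheadrightarrow B$ from a projective module $R$ and set $\bar f:=(f,g_0):A\oplus R\to B$. This $\bar f$ is surjective; moreover $\bar f\,i_A=f$ and $\underline{i_A}$ is a stable isomorphism by the argument in $(\Leftarrow)$, so $\underline{\bar f}$ is likewise a stable isomorphism. Any construction producing the required isomorphism for $\bar f$ with $P=R$ and some $Q$ will then give the statement for $f$.

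Now comes the heart of the argument: the kernel $K:=\ker(\bar f)$ is projective. Let $g:B\to A\oplus R$ represent the stable inverse of $\underline{\bar f}$, and write $1_B-\bar f g=\delta\gamma$ with $\gamma:B\to Q$, $\delta:Q\to B$, and $Q$ projective. Projectivity of $Q$ together with surjectivity of $\bar f$ produces a lift $\delta'':Q\to A\oplus R$ with $\bar f\delta''=\delta$, so $s:=g+\delta''\gamma$ satisfies $\bar f s=1_B$. Hence $\bar f$ splits and $A\oplus R=K\oplus s(B)$ with $s(B)\cong B$; comparing with the stable isomorphism $\underline{\bar f}$ forces $\underline K=0$, whence \lemref{criteria} yields that $K$ is projective. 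Finally, define $\tilde f:A\oplus R\to B\oplus K$ by $x\mapsto(\bar f(x),\,x-s\bar f(x))$ with inverse $(b,k)\mapsto s(b)+k$; setting $P:=R$ and $Q:=K$, the identity $p\tilde f\, i(a)=\bar f(a,0)=f(a)$ completes the construction.

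The main obstacle is establishing projectivity of the kernel $K$. A tempting first approach is to write $\tilde f$ directly as a $2\times 2$ matrix in the data $(\alpha,\beta,\gamma,\delta)$ provided by the stable inverse, but naive ans\"atze such as $\bigl(\begin{smallmatrix}f&\delta\\-\alpha&0\end{smallmatrix}\bigr)$ are not invertible on the nose in general. Passing to an epimorphism and using projectivity of $Q$ to lift along $\bar f$ is the key move that converts stable invertibility into an honest splitting, after which the construction of $\tilde f$ is forced.
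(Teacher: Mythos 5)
Your proof is correct. Note, however, that the paper does not prove this statement at all: it is quoted verbatim as Heller's theorem, with a citation to \cite[Theorem 2.2]{H2} in place of a proof. So there is nothing in the paper to compare against except the reference; what you have written is, in effect, a self-contained reconstruction of Heller's original argument. The two key moves are exactly the standard ones: (i) replace $f$ by the surjection $\bar f = f \perp g_0 : A\oplus R \to B$ with $R$ projective, which changes $\underline{f}$ only by composition with the stable isomorphism $\underline{i_A}$; and (ii) upgrade the stable right inverse $g$ to an honest splitting $s$ of $\bar f$ by lifting the projective correction term $\delta\gamma = 1_B - \bar f g$ through the epimorphism $\bar f$ (this is the paper's Lemma~\ref{lift}, and the same lifting trick reappears in Proposition~\ref{prop-epi}). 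Once $\bar f$ splits, $A\oplus R \cong K \oplus s(B)$, and the fact that $\underline{\bar f}$ is an isomorphism forces $\underline{K}=0$, i.e.\ $K$ projective --- this is precisely Lemma~\ref{split}(a), whose proof in the paper is independent of Theorem~\ref{heller}, so there is no circularity. The explicit formula $\tilde f(x) = (\bar f(x),\, x - s\bar f(x))$ with inverse $(b,k)\mapsto s(b)+k$ is visibly an isomorphism making the square commute with $P=R$ and $Q=K$. Your closing remark about the failure of a naive $2\times 2$ matrix ansatz correctly identifies why the reduction to an epimorphism is the essential step.
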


For our purposes, it is convenient to introduce the following

\begin{definition}
 A module is \texttt{totally stable} if any endomorphism of that module is an automorphism whenever it is an automorphism modulo projectives.\footnote
 	{
This class of modules, without a name, was already considered in \cite{Aus69}, \S 2.
	} 
\end{definition}
It is not difficult to see~(\cite[Lemma 2.1, c)]{Aus69}) that a module $A$ is totally stable if and only if any endomorphism of $A$ factoring through a projective is in the radical of $\mathrm{End}\,A$. 


\begin{lemma}\label{zero dual}
 Equivalent homomorphisms with a zero dual domain are equal. Any module with a zero dual is totally stable.
\end{lemma}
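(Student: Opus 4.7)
The plan is to reduce both assertions to the single observation: if a module $A$ has zero dual $A^* = \mathrm{Hom}(A,\Lambda) = 0$, then $\mathrm{Hom}(A,P) = 0$ for every projective $\Lambda$-module $P$. Once this is established, the first sentence is immediate (a difference $f-g$ that factors through some projective $P$ must be zero, since the first leg of the factorization $A \to P \to B$ lies in the trivial group), and the second sentence follows at once from the alternative characterization of totally stable modules recalled right before the lemma.

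To establish the key observation, I would first reduce to the free case: every projective $P$ is a direct summand of some free module $F$, say with inclusion $\iota : P \hookrightarrow F$ and retraction $\rho : F \to P$ satisfying $\rho\iota = 1_P$; given $\alpha : A \to P$, the map $\iota\alpha : A \to F$ determines $\alpha$ via $\alpha = \rho\iota\alpha$, so it suffices to treat $P = F = \bigoplus_{j \in J}\Lambda$. For such an $F$, composition with the canonical projection $\pi_j : F \to \Lambda$ sends a homomorphism $\alpha : A \to F$ to an element $\pi_j\alpha \in A^* = 0$, hence $\pi_j\alpha = 0$ for every $j$. Since the image of any $a \in A$ under $\alpha$ has only finitely many nonzero coordinates, and each of these coordinates is $(\pi_j\alpha)(a) = 0$, we conclude $\alpha = 0$.

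For the first statement of the lemma, suppose $f,g : A \to B$ are equivalent with $A^* = 0$. Then $f - g$ factors as $A \xrightarrow{\alpha} P \xrightarrow{\beta} B$ with $P$ projective, and by the observation above $\alpha = 0$, giving $f = g$. For the second statement, suppose $A^* = 0$ and invoke the equivalent definition of total stability: $A$ is totally stable if and only if every endomorphism of $A$ that factors through a projective lies in $\mathrm{rad}(\mathrm{End}\,A)$. But by the same observation, the only such endomorphism is $0$, which trivially lies in the radical; hence $A$ is totally stable.

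There is no real obstacle here; the only subtlety is recognizing that the reduction $P \hookrightarrow F = \bigoplus_J \Lambda$ is legitimate for arbitrary (possibly non-finitely-generated) projectives, and that the coordinate projections $\pi_j$ genuinely separate points of $F$ so that vanishing of each $\pi_j\alpha$ forces $\alpha = 0$.
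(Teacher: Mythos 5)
Your proof is correct and follows essentially the same route as the paper, whose entire argument is the one-line observation that any homomorphism from a module with zero dual into a projective must vanish; you have simply supplied the routine verification (reduction to a free module and coordinate projections) and made explicit the appeal to the radical characterization of total stability that the paper states just before the lemma.
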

\begin{proof}
Indeed, any homomorphism from such a module factoring through a projective must be zero.
\end{proof}

Recall that a module is said to be \texttt{stable} if it has no nonzero projective summands. If no left ideal of $\Lambda$ not contained in the radical of $\Lambda$ is stable, then a finitely generated module is totally stable if and only if it is stable~\cite[Proposition 2.5 and the following note]{Aus69}. A ring satisfies the above condition whenever it is semiperfect or left hereditary, [ibid.]. In fact, as Lemma~\ref{hered-stable} below shows, over a left hereditary ring, any module, finitely generated or not, is totally stable if and only if it is stable.

\begin{lemma}\label{hered-stable}
 Let $\Lambda$ be a ring and $A$ a $\Lambda$-module. Consider the following conditions:
  
\begin{enumerate}
 \item $A^{\ast} = 0$, where $(-)^{\ast} = \Hom (-,\Lambda)$;
 \smallskip
 \item $A$ is totally stable;
 \smallskip
 \item $A$ is stable.
\end{enumerate}
Then (1) $\Rightarrow$ (2) $\Rightarrow$ (3). If $\Lambda$ is left hereditary, then these conditions are equivalent.
\end{lemma}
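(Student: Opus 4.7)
The plan is to prove the two implications in general and then use the hereditary hypothesis to close the loop. The first implication (1) $\Rightarrow$ (2) is immediate from Lemma~\ref{zero dual}, since that lemma already records that any module with a vanishing dual is totally stable.

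For (2) $\Rightarrow$ (3), I would argue by contrapositive. Suppose $A$ is not stable, so $A = P \oplus B$ with $P$ a nonzero projective summand. Let $e : A \to A$ be the endomorphism that is zero on $P$ and the identity on $B$, i.e.\ $e = \iota_B \pi_B$ where $\pi_B$ and $\iota_B$ are the canonical projection and inclusion. Then $\id_A - e = \iota_P \pi_P$ factors through the projective $P$, so $\underline{e} = \underline{\id_A}$ in the stable category, i.e.\ $e$ is an automorphism modulo projectives. However $\ker e = P \neq 0$, so $e$ is not an automorphism. Hence $A$ fails to be totally stable, as required.

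For the equivalence in the hereditary case, it suffices to prove (3) $\Rightarrow$ (1). Assume $A$ is stable and let $f \in A^\ast = \Hom(A, \Lambda)$. The image $f(A)$ is a submodule of ${}_\Lambda \Lambda$, i.e.\ a left ideal. Since $\Lambda$ is left hereditary, every submodule of the projective $\Lambda$ is projective, so $f(A)$ is projective. The surjection $A \twoheadrightarrow f(A)$ therefore splits, producing a decomposition $A \cong \Ker f \oplus f(A)$ in which $f(A)$ is a projective direct summand of $A$. Stability of $A$ forces $f(A) = 0$, hence $f = 0$. Since $f$ was arbitrary, $A^\ast = 0$, completing the proof.

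There is no real obstacle here; the only point requiring a small amount of care is the construction of the endomorphism $e$ in the step (2) $\Rightarrow$ (3), where one has to verify that $\id_A - e$ genuinely factors through a projective (which is automatic because $P$ itself is the projective summand). Everything else is formal, and the hereditary hypothesis enters exactly once, to guarantee that images of maps to $\Lambda$ are projective and therefore split off.
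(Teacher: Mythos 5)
Your proposal is correct and follows essentially the same route as the paper: (1) $\Rightarrow$ (2) by citing Lemma~\ref{zero dual}, (2) $\Rightarrow$ (3) via the endomorphism that is the identity on the complement and zero on the projective summand (the paper states this directly where you argue by contrapositive, but the underlying map and reasoning are identical), and (3) $\Rightarrow$ (1) by observing that the image of a linear form is a projective left ideal and hence splits off as a projective summand of $A$. No gaps.
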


\begin{proof}
 (1) $\Rightarrow$ (2). This is Lemma~\ref{zero dual}.
 \smallskip
 
 (2) $\Rightarrow$ (3). Suppose $A \simeq B \oplus P$, where $P$ is projective. Then the direct sum of the identity of $B$ and the zero endomorphism of $P$ is an automorphism of $A$ because it gives rise to an automorphism in the stable category. It follows that $P = (0)$. 
 \smallskip
 
 Now assume that $\Lambda$ is left hereditary.
 \smallskip
 
 (3) $\Rightarrow$ (1). The image of any linear form on $A$, being an ideal, is projective, and is therefore a projective summand of $A$. Since $A$ is stable, that image is zero.
\end{proof}

Combining Lemmas~\ref{zero dual} and~\ref{hered-stable}, we have

\begin{lemma}\label{stabledomain}
Over a left hereditary ring, equivalent homomorphisms with a stable domain are equal. \qed
\end {lemma}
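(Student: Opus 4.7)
The plan is to chain together the two preceding lemmas directly. Let $f, g : A \to B$ be equivalent homomorphisms over a left hereditary ring $\Lambda$, with $A$ stable. By definition, $f - g$ factors through a projective, so it suffices to show that any homomorphism from $A$ to $B$ factoring through a projective is zero.

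To that end, first invoke Lemma~\ref{hered-stable}: since $\Lambda$ is left hereditary, the implication (3) $\Rightarrow$ (1) tells us that stability of $A$ forces $A^{\ast} = \Hom(A,\Lambda) = 0$. With the vanishing of the dual in hand, apply the first assertion of Lemma~\ref{zero dual}: any two equivalent homomorphisms out of a module with zero dual are equal. Hence $f = g$.

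There is no real obstacle here; the lemma is essentially a repackaging of the chain $\text{stable} \Rightarrow A^{\ast}=0 \Rightarrow$ every homomorphism from $A$ factoring through a projective vanishes. The only thing worth remarking on is that left hereditariness is used precisely to close the gap between stability (a structural condition on summands) and vanishing of the dual (a homological condition), which is exactly the content of the nontrivial implication in Lemma~\ref{hered-stable}.
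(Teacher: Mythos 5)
Your proof is correct and is exactly the paper's argument: the lemma is stated immediately after the sentence ``Combining Lemmas~\ref{zero dual} and~\ref{hered-stable}, we have,'' so the intended proof is precisely the chain stable $\Rightarrow A^{\ast}=0$ (via the hereditary hypothesis) $\Rightarrow$ equivalent homomorphisms are equal. Nothing to add.
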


Returning to the last two conditions of Lemma~\ref{criteria}, we would like to relate, in subsequent sections, the functors of type 
$\h^{A}$ and $\h_{A}$ by various exact sequences. A proper context to speak of exactness is a category with kernels and cokernels, with a natural candidate here being the category of functors between two fixed categories as objects and natural transformations between those functors as morphisms. However, if the domain category is not small, the morphisms between two objects need not form a set. One way around this obstacle is to use the notion of quasicategories~\cite[3.49 - 3.51]{AHS}. However, for our purposes it suffices to consider only componentwise exactness, whence the following definition.

\begin{definition}
 Let $F$, $G$, and $H$ be functors  $\Lambda$-$\mathrm{Mod} \to \mathrm{Ab}$ to the category of abelian groups. A sequence  
 $0 \to F \to G \to H \to 0$ of natural transformations is said to be 
 c-exact if it is exact in each component.  Similarly, define c-kernels, c-cokernels,  c-monomorphisms and c-epimorphisms.
\end{definition}

\section{The quotient functor $\mathcal{Q}$}\label{Quotient}

In this section we give criteria for the existence of adjoints of the quotient functor $\mathcal{Q}: \Lambda$-$\mathrm{Mod} \to \Lambda$-$\underline{\mathrm{Mod}}$. To describe the class of rings for which the quotient functor has a \texttt{left} adjoint, we first need a preliminary result characterizing rings over 
which~$\mathcal{Q}$ preserves products.

\begin{proposition}\label{products}
Given a ring $\Lambda$, the following are equivalent:
\begin{enumerate}
 \item the quotient functor 
$\mathcal{Q} : \Lambda\text{-}\mathrm{Mod} \to \Lambda\text{-}\underline{\mathrm{Mod}}$ preserves  products;
\smallskip
\item the direct product of any  family of projectives is projective;
\smallskip
\item $\Lambda$ is left perfect and right coherent.
\end{enumerate}
\end{proposition}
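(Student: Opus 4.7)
The plan is to establish the cycle $(1)\Rightarrow(2)\Rightarrow(1)$ by a direct categorical argument about the quotient functor, and then invoke the classical Chase--Bass characterization for $(2)\Leftrightarrow(3)$.

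For $(1)\Rightarrow(2)$, I would take any family $\{P_i\}$ of projective modules. By Lemma~\ref{criteria}, each $\underline{P_i}$ is a zero object in $\Lambda$-$\Modst$, and since the product of zero objects in any additive category is again a zero object, $\prod_i \underline{P_i}=0$ in the stable category. If $\mathcal{Q}$ preserves products, then $\underline{\prod_i P_i}\cong \prod_i \underline{P_i}=0$, so another application of Lemma~\ref{criteria} forces $\prod_i P_i$ to be projective.

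For $(2)\Rightarrow(1)$, given $\{M_i\}$ with product $\prod_i M_i$ in $\Lambda$-$\Mod$ and canonical projections $\pi_i$, I have to verify that $(\underline{\prod_i M_i},\underline{\pi_i})$ is a product in $\Lambda$-$\Modst$. Representability (surjectivity of the canonical map $(\underline{X,\prod_i M_i})\to \prod_i(\underline{X,M_i})$) is automatic: any family of representatives $\phi_i\colon X\to M_i$ assembles into $\phi=(\phi_i)\colon X\to\prod_i M_i$ with $\pi_i\phi=\phi_i$ on the nose. The real content is injectivity: if $\phi\colon X\to\prod_i M_i$ has every $\pi_i\phi$ factoring through a projective, say $\pi_i\phi=\beta_i\alpha_i$ with $\alpha_i\colon X\to Q_i$ and $Q_i$ projective, I combine these into $\alpha=(\alpha_i)\colon X\to\prod_i Q_i$ and $\beta=\prod_i\beta_i\colon\prod_i Q_i\to\prod_i M_i$, obtaining $\phi=\beta\alpha$. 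By hypothesis $(2)$, $\prod_i Q_i$ is projective, so $\underline{\phi}=0$ in the stable category.

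The equivalence $(2)\Leftrightarrow(3)$ is the well-known Chase theorem: direct products of flat left modules are flat iff $\Lambda$ is right coherent, and by Bass' Theorem~P, $\Lambda$ is left perfect iff every flat left module is projective. The forward direction $(3)\Rightarrow(2)$ is then immediate since projectives are flat, products of flats are flat by coherence, and flats are projective by perfectness. The converse $(2)\Rightarrow(3)$ uses that $\Lambda^I$ projective for all index sets $I$ simultaneously forces right coherence (Chase) and left perfectness (via the DCC on principal right ideals that one extracts from projectivity of countable products of copies of $\Lambda$).

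The main obstacle, conceptually, is the injectivity step in $(2)\Rightarrow(1)$: one must recognize that a factorization modulo projectives on each coordinate can be \emph{simultaneously} assembled into a single factorization through a projective, and it is precisely hypothesis $(2)$ that makes the assembled module projective. The equivalence with the ring-theoretic condition $(3)$ is then a citation of Chase's classical result rather than a new argument.
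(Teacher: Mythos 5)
Your proposal is correct and follows essentially the same route as the paper: $(1)\Rightarrow(2)$ via products of zero objects, $(2)\Rightarrow(1)$ by assembling the coordinatewise factorizations $\pi_i\phi=\beta_i\alpha_i$ into a single factorization through $\prod_i Q_i$, and $(2)\Leftrightarrow(3)$ by citation of Chase. The only cosmetic difference is that you phrase the product verification in terms of the canonical map on stable Hom-sets rather than directly checking the universal property, which is an equivalent formulation.
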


\begin{proof}

(1) $\Rightarrow$ (2). 
Since $\mathcal{Q}$ preserves products, a product of any family of projectives exists in the stable category and is represented by the product of these modules in the original category. Since projectives are zero objects in the stable category, their product is also zero, i.e., the product of projectives is projective in the original category. 
\smallskip

(2) $\Rightarrow$ (1). 
Let $(A_{i})_{i\in I}$ be a family of modules indexed by a set  $I$. We want to show that the product of the $A_{i}$ in the stable category exists and is represented by the product $(\prod_{i \in I} A_{i}, p_{i})$ in the original category. Given a module~$B$ and a family of homomorphisms $(f_i : B \to A_i)_{i\in I}$,  we have a homomorphism $f:B \to \prod_{i \in I} A_{i}$ such that $p_i f=f_{i}$ for any $i\in I$. Applying $\mathcal{Q}$, we have the desired commutation relations for $\underline{f}$ in the stable category. To show that $\underline{f}$ is unique, assume that 
$\underline{p_i}\underline{g} = \underline{f_i}$ for some 
$g:B \to \prod_{i \in I} A_i$ and for all~$i$, i.e., each $p_i (f - g)$ factors through a projective~$P_i$. Then $f - g$ factors through the product of the $P_i$, which is  projective by the assumption. Thus $\underline{f}=\underline{g}$. 
 \smallskip
 
(2) $\Leftrightarrow$ (3). This is a result of Chase~\cite[Theorem 3.3]{Chase}.
\end{proof}

\begin{corollary}\label{small-products}
 If $\Lambda$ is left perfect and right coherent, then
 $\Lambda$-$\Modst$ has  products. \qed
\end{corollary}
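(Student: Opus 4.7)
The plan is to read this off directly from Proposition~\ref{products}. Under the hypothesis that $\Lambda$ is left perfect and right coherent, condition~(3) of that proposition holds, hence so does condition~(1): the quotient functor $\mathcal{Q}$ preserves products. What I want to emphasize is that the implication (2)$\Rightarrow$(1) in the proof of Proposition~\ref{products} does more than merely assert preservation of existing products; it actually constructs the product of an arbitrary set-indexed family $(A_i)_{i \in I}$ of $\Lambda$-modules inside $\Lambda$-$\Modst$, as $(\prod_{i \in I} A_i, \underline{p_i})$, where $p_i$ are the canonical projections in $\Lambda$-$\Mod$.

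From there the corollary is almost immediate. Given any family of objects of $\Lambda$-$\Modst$, use density of $\mathcal{Q}$ to pick a representative $\Lambda$-module for each one. Form their product in $\Lambda$-$\Mod$, and apply $\mathcal{Q}$. By the construction recalled above, the resulting object, together with the images of the canonical projections, is a product in the stable category. Hence $\Lambda$-$\Modst$ has all small products.

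There is essentially no obstacle; the entire content of the statement is already packaged in Proposition~\ref{products}. The only small point worth being explicit about is that existence of products in $\Lambda$-$\Modst$ does not follow abstractly from ``$\mathcal{Q}$ preserves products'' (which, read literally, only concerns those limits that already exist in both categories), but rather from the explicit construction in the proof of (2)$\Rightarrow$(1), combined with density of $\mathcal{Q}$.
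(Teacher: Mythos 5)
Your proposal is correct and matches the paper, which derives the corollary directly from Proposition~\ref{products}; the proof of (2)$\Rightarrow$(1) there indeed constructs the product of an arbitrary family in the stable category as the image under $\mathcal{Q}$ of the product in $\Lambda$-$\Mod$. Your remark that mere ``preservation'' would not literally suffice without that explicit construction is a fair and accurate reading of what the paper's argument actually establishes.
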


Specializing Proposition~\ref{products} to the case of a left hereditary ring, we can now answer the question of when 
$\mathcal{Q}$ has a left adjoint.

\begin{theorem}\label{left-adjoint}
 The following are equivalent: 
 
\begin{enumerate}
 \item The ring $\Lambda$ is left hereditary and satisfies the equivalent conditions of Proposition~\ref{products};
 \smallskip
 \item The quotient functor $\mathcal{Q}$ preserves  limits;
  \smallskip
 \item The quotient functor $\mathcal{Q}$ has a left adjoint.
\end{enumerate} 
\end{theorem}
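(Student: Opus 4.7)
The plan is to close the cycle by proving $(3)\Rightarrow(2)$, $(2)\Rightarrow(3)$, $(2)\Rightarrow(1)$, and $(1)\Rightarrow(2)$. The implication $(3)\Rightarrow(2)$ is immediate because a functor possessing a left adjoint is itself a right adjoint, and right adjoints preserve all limits. For $(2)\Rightarrow(3)$, I would apply Freyd's General Adjoint Functor Theorem: $\Lambda$-$\Mod$ is complete and locally small, so it suffices to verify the solution set condition. But density and fullness of $\mathcal{Q}$ make the SSC trivial: any object of the stable category is $\underline{M}$ for some module $M$, and any morphism $f:\underline{M}\to\underline{N}$ equals $\underline{g}$ for some $g:M\to N$, so the singleton $\{\id_{\underline{M}}\}$ serves as a solution set.

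For $(2)\Rightarrow(1)$, preservation of products gives condition (2) of Proposition~\ref{products}, so $\Lambda$ is left perfect and right coherent. To deduce that $\Lambda$ is left hereditary, take any module $M$ with projective presentation $0\to K\to P\to M\to 0$; applying the kernel-preserving functor $\mathcal{Q}$ and using $\underline{P}=0$, the morphism $\underline{K}\to 0$ must be the kernel of the zero map $0\to\underline{M}$, which is the zero object. Hence $\underline{K}=0$, i.e.\ $K$ is projective, so every first syzygy is projective and $\Lambda$ has global dimension at most one.

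The substantive direction is $(1)\Rightarrow(2)$. Products are preserved by Proposition~\ref{products}, and since $\mathcal{Q}$ is additive it suffices to verify preservation of kernels. Let $k:K\to M$ be the kernel of $f:M\to N$, and suppose $h:L\to M$ satisfies $\underline{f}\,\underline{h}=0$, witnessed by $fh=\beta\alpha$ with $\alpha:L\to P$, $\beta:P\to N$, and $P$ projective. Form the pullback $Q=M\times_N P$, with projections $q_1,q_2$, and the induced map $(h,\alpha):L\to Q$. The second projection $q_2:Q\to P$ has kernel $K$ and image $P':=\beta^{-1}(f(M))\subseteq P$; since $\Lambda$ is left hereditary, $P'$ is projective, so the sequence $0\to K\to Q\to P'\to 0$ splits and $Q\cong K\oplus P'$. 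Decomposing $(h,\alpha)$ as $(h_K,h_{P'})$ and composing with $q_1$ yields $h=k\,h_K+(q_1|_{P'})\circ h_{P'}$; the second summand factors through the projective $P'$, so $\underline{h}=\underline{k}\,\underline{h_K}$, giving existence. For uniqueness, if $k(h_1-h_2)=\beta'\alpha'$ with $\alpha':L\to\tilde P$, $\beta':\tilde P\to M$, and $\tilde P$ projective, then $\beta'\alpha'$ lands in $K$, so $\tilde P_0:=(\beta')^{-1}(K)\subseteq\tilde P$ is again projective by heredity; $\alpha'$ factors through $\tilde P_0$, and cancelling the monomorphism $k$ exhibits $h_1-h_2$ as a map factoring through the projective $\tilde P_0$, so $\underline{h_1}=\underline{h_2}$.

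The main obstacle is the kernel-preservation step in $(1)\Rightarrow(2)$. Both the existence of the factorisation (splitting $0\to K\to Q\to P'\to 0$) and its uniqueness (cutting $\tilde P$ down to $(\beta')^{-1}(K)$) hinge on submodules of projectives being projective, so left heredity is used in a genuinely essential way, not merely to shorten resolutions; one should expect that without it the kernel-preservation argument breaks down at either the splitting or the cancellation step.
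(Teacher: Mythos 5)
Your proposal is correct, and while its skeleton (a cycle of implications closed by an adjoint functor theorem) matches the paper's, the substance differs in two places. First, for $(2)\Rightarrow(3)$ the paper invokes the \emph{special} adjoint functor theorem, using that $\Lambda$-$\Mod$ is well-powered with a cogenerator; you use Freyd's general adjoint functor theorem and observe that the solution set condition is vacuous because $\mathcal{Q}$ is full and bijective on objects, so $\{\mathrm{id}_{\underline{M}}\}$ is a solution set. That is a slicker and more portable observation. Second, and more substantially, the paper outsources all of the module-theoretic content to Theorem~\ref{characterization}: left heredity is equivalent to $\mathcal{Q}$ preserving kernels (and finite limits), so $(1)\Rightarrow(2)$ and $(3)\Rightarrow(1)$ become one-line citations. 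You instead prove the two needed implications of that theorem from scratch: for kernel preservation you form the pullback $M\times_N P$, split it as $K\oplus P'$ using that $P'=\beta^{-1}(f(M))$ is projective by heredity, and handle uniqueness by cutting $\tilde P$ down to $(\beta')^{-1}(k(K))$ and cancelling the monomorphism $k$; for the converse you note that kernel preservation forces every first syzygy to be projective, which by Schanuel gives heredity. Both of your arguments are sound (your uniqueness step is essentially the paper's Lemma~\ref{basic} in disguise, and your existence step parallels the pullback argument hidden inside the proof of Theorem~\ref{characterization} via Proposition~\ref{half-exact}). The paper's route is shorter given the machinery it has already built; yours is self-contained and makes visible exactly where heredity is used, at the cost of re-deriving part of Theorem~\ref{characterization}.
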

 
\begin{proof}
 (1) $\Rightarrow$ (2). By Theorem~\ref{characterization}, proved in the next section, $\mathcal{Q}$ preserves kernels and, since it is additive, it preserves equalizers. As $\mathcal{Q}$ preserves  products, it preserves limits. 
  \smallskip

(2) $\Rightarrow$ (3). Since $\Lambda$-$\mathrm{Mod}$ has a cogenerator, $\mathcal{Q}$ has a left adjoint by virtue of the special adjoint functor theorem~\cite[Corollary to Theorem~5.8.2]{M}.
 \smallskip

(3) $\Rightarrow$ (1). If $\mathcal{Q}$ admits a left adjoint, it preserves kernels and  products. Using 
Theorem~\ref{characterization} again, we have that $\Lambda$ is left hereditary.
\end{proof}

\begin{corollary}\label{hered-complete}
 If $\Lambda$ is left hereditary, left perfect, and right coherent, then the stable category of $\Lambda$ is complete. \qed
\end{corollary}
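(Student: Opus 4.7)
The plan is to reduce completeness of $\Lambda$-$\Modst$ to the existence of small products and equalizers, and to obtain each by transferring the corresponding limit from $\Lambda$-$\Mod$ along the quotient functor $\mathcal{Q}$. Since the hypothesis ``left perfect and right coherent'' already supplies small products in the stable category by Corollary~\ref{small-products}, only equalizers remain to be constructed.

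For equalizers I would invoke Theorem~\ref{left-adjoint}: the stated hypotheses amount to condition (1) of that theorem (left hereditary together with the equivalent conditions of Proposition~\ref{products}), so $\mathcal{Q}$ preserves all small limits, and in particular preserves equalizers. Given a parallel pair $\underline{f},\underline{g}:A\to B$ in the stable category, I would use the fullness of $\mathcal{Q}$ to choose module-theoretic representatives $f,g:A\to B$, form their equalizer $e:E\to A$ in $\Lambda$-$\Mod$, and then appeal to limit preservation to conclude that $\mathcal{Q}(e):E\to A$ is an equalizer of $\underline{f}$ and $\underline{g}$ in $\Lambda$-$\Modst$. The point to double-check is that any other cone $\underline{h}:X\to A$ with $\underline{f}\,\underline{h}=\underline{g}\,\underline{h}$ factors uniquely through $\underline{e}$, but this is exactly the content of preservation of the equalizer diagram by~$\mathcal{Q}$.

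With small products and equalizers available, the classical product-equalizer construction yields all small limits, so $\Lambda$-$\Modst$ is complete. I do not expect any genuine obstacle: the real work is concentrated in Theorem~\ref{left-adjoint} together with Proposition~\ref{products} and Corollary~\ref{small-products}, and what remains is categorical bookkeeping ensuring that the limits computed in $\Lambda$-$\Mod$ descend cleanly to the stable category via the quotient functor.
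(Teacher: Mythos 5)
Your proof is correct and takes essentially the same route as the paper: the hypotheses are precisely condition (1) of Theorem~\ref{left-adjoint}, products in $\Lambda$-$\Modst$ come from Corollary~\ref{small-products}, equalizers come from the preservation of kernels/equalizers by $\mathcal{Q}$ (ultimately Theorem~\ref{characterization}), and the standard product--equalizer construction yields completeness. The one genuinely delicate point --- that a parallel pair in the stable category can always be lifted to $\Lambda$-$\Mod$ (unlike a general commutative diagram), so that the equalizer can be computed upstairs and pushed down --- you address correctly via fullness of $\mathcal{Q}$.
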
 


Similar to Proposition~\ref{products}, one easily proves

\begin{proposition}\label{P: quot-pres-coprod}
For any ring, the quotient functor preserves  coproducts, and hence the stable category of any ring has  coproducts. \qed
\end{proposition}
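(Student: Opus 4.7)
The plan is to mimic the argument for $(2) \Rightarrow (1)$ of Proposition~\ref{products}, but replacing products with coproducts, so that the key input becomes the trivial fact that a direct sum of projectives is projective; this is why no additional hypothesis on $\Lambda$ is needed.

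First, I would fix a family $(A_{i})_{i \in I}$ of $\Lambda$-modules and argue that the coproduct $(\bigoplus_{i \in I} A_{i}, \iota_{i})$ computed in $\Lambda$-$\mathrm{Mod}$ remains a coproduct in $\Lambda$-$\underline{\mathrm{Mod}}$ after applying $\mathcal{Q}$. Given a test module $B$ and stable morphisms $\underline{f_{i}} : A_{i} \to B$, pick honest representatives $f_{i} : A_{i} \to B$ and invoke the universal property of the coproduct in $\Lambda$-$\mathrm{Mod}$ to obtain a unique $f : \bigoplus A_{i} \to B$ with $f \iota_{i} = f_{i}$. Applying $\mathcal{Q}$ yields $\underline{f}\,\underline{\iota_{i}} = \underline{f_{i}}$, which takes care of existence.

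For uniqueness in the stable category, suppose $\underline{g} : \bigoplus A_{i} \to B$ also satisfies $\underline{g}\,\underline{\iota_{i}} = \underline{f_{i}}$ for every $i$, so that each composition $(g - f) \iota_{i}$ factors as $\beta_{i} \alpha_{i}$ through some projective $P_{i}$. I would then assemble these into a single factorization of $g - f$ through $\bigoplus_{i} P_{i}$: take $\alpha : \bigoplus A_{i} \to \bigoplus P_{i}$ to be the coproduct of the $\alpha_{i}$, and $\beta : \bigoplus P_{i} \to B$ to be the map induced by the $\beta_{i}$ via the universal property. Since a direct sum of projectives is projective, $g - f$ factors through a projective, hence $\underline{f} = \underline{g}$.

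For the ``hence'' clause, I would appeal to the fact that $\mathcal{Q}$ is dense, so every object of $\Lambda$-$\underline{\mathrm{Mod}}$ is of the form $\mathcal{Q}(A)$ for some $A$; combined with the coproduct-preservation just established and the existence of arbitrary coproducts in $\Lambda$-$\mathrm{Mod}$, this shows $\Lambda$-$\underline{\mathrm{Mod}}$ has all small coproducts. No real obstacle is anticipated: the asymmetry with Proposition~\ref{products} is exactly that coproducts of projectives are automatically projective, so no coherence or perfectness hypothesis is required.
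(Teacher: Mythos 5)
Your proposal is correct and is exactly the argument the paper has in mind: the paper omits the proof, saying only ``Similar to Proposition~\ref{products}, one easily proves,'' and your adaptation of the $(2)\Rightarrow(1)$ step there --- with the product of projectives replaced by the automatically projective direct sum --- is the intended verification.
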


Now we can characterize rings over which $\mathscr{Q}$ has a right adjoint.

\begin{proposition}

The following are equivalent:

\begin{enumerate}

 \item $\mathcal{Q}$ has a right adjoint;
 \smallskip
 
 \item $\mathcal{Q}$ preserves cokernels;
  \smallskip
  
  \item $\mathcal{Q}$ preservers colimits;
  \smallskip 
  
 \item $\mathcal{Q}$ preservers finite colimits;
  \smallskip
  
 \item $\mathcal{Q}$ preserves epimorphisms;
  \smallskip
  
 \item $\Lambda$ is semisimple.
  \smallskip
  
\end{enumerate}

\end{proposition}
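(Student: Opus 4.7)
The plan is to prove the six conditions equivalent by running the cycle
\[
(1) \Rightarrow (3) \Rightarrow (4) \Rightarrow (2) \Rightarrow (5) \Rightarrow (6) \Rightarrow (1).
\]
The opening arrows $(1) \Rightarrow (3) \Rightarrow (4) \Rightarrow (2)$ are formal: any left adjoint preserves colimits; (3) trivially implies (4); and cokernels are particular finite colimits.

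For $(2) \Rightarrow (5)$, I would use that $\Lambda$-$\Mod$ is abelian, so every epimorphism $f : A \to B$ is the cokernel of its own kernel $\ker f : K \to A$. Hypothesis (2) then gives that $\mathcal{Q}(f)$ is the cokernel of $\mathcal{Q}(\ker f)$ in $\Lambda$-$\Modst$, and cokernel morphisms are epimorphisms, so $\underline{f}$ is epi in $\Lambda$-$\Modst$.

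The main substance of the argument is $(5) \Rightarrow (6)$; the rest is category-theoretic bookkeeping, so this is where care is needed. Suppose, for contradiction, that $\Lambda$ is not semisimple, so some module $M$ is non-projective; choose a projective $P$ together with an epimorphism $p : P \twoheadrightarrow M$ in $\Lambda$-$\Mod$. By (5), $\underline{p} : \underline{P} \to \underline{M}$ is an epimorphism in $\Lambda$-$\Modst$. But $\underline{P} = 0$ by Lemma~\ref{criteria}, so $\underline{p}$ is the zero morphism out of a zero object; every parallel pair $\underline{\alpha}, \underline{\beta} : \underline{M} \rightrightarrows \underline{X}$ then trivially equalizes $\underline{p}$, so the epi property forces $\underline{\alpha} = \underline{\beta}$. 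Taking $\underline{\beta} = 0$ collapses every Hom set out of $\underline{M}$ to $\{0\}$; that is, $\h^{M} = 0$, and Lemma~\ref{criteria} gives the contradiction that $M$ is projective.

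Finally, for $(6) \Rightarrow (1)$, semisimplicity makes every module projective, so $\Lambda$-$\Modst$ is a trivial additive category whose unique object up to isomorphism is the zero object. The constant functor $R : \Lambda$-$\Modst \to \Lambda$-$\Mod$ sending every object to $0$ and every morphism to $0$ is then right adjoint to $\mathcal{Q}$, since both $\underline{\Hom}(\mathcal{Q}A, \underline{X})$ and $\Hom(A, 0)$ consist of a single element for every $A$ and every $\underline{X}$, so the adjunction isomorphism holds tautologically.
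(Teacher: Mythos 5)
Your proof is correct, and the two substantive implications coincide with the paper's: for $(5) \Rightarrow (6)$ the paper likewise takes a projective cover $P \twoheadrightarrow L$ of an arbitrary module and concludes $L$ is projective (your spelling-out of why a zero morphism that is an epimorphism forces $\h^{M}=0$ is exactly the justification the paper leaves implicit), and for $(6) \Rightarrow (1)$ both proofs exhibit the constant zero functor as the right adjoint. The only divergence is in the arrangement of the formal arrows: the paper runs $(1)\Rightarrow(2)\Rightarrow(3)\Rightarrow(4)\Rightarrow(5)$, where the step $(2)\Rightarrow(3)$ has to invoke Proposition~\ref{P: quot-pres-coprod} (preservation of coproducts) to upgrade cokernels to all colimits, and $(4)\Rightarrow(5)$ is dismissed as ``well-known.'' Your cycle $(1)\Rightarrow(3)\Rightarrow(4)\Rightarrow(2)\Rightarrow(5)$ gets all colimits for free from the adjoint functor theorem and reaches epimorphisms via the fact that every epimorphism in an abelian category is the cokernel of its kernel; this is marginally more self-contained, at the cost of not recording the (true, and separately proved in the paper) fact that $\mathcal{Q}$ preserves coproducts unconditionally. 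Either arrangement is fine.
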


\begin{proof}

(1) $\Rightarrow$ (2). Being left adjoint, $\mathscr{Q}$ preserves coequalizers, and since it preserves the zero morphisms, it preserves cokernels.
 \smallskip

(2) $\Rightarrow$ (3). Since the coequalizer of two module homomorphisms is the cokernel of their difference, $\mathcal{Q}$ preserves coequalizers. By Proposition~\ref{P: quot-pres-coprod}, $\mathcal{Q}$ also preserves  coproducts, and therefore it preserves colimits.
 \smallskip

(3) $\Rightarrow$ (4). Trivial.
 \smallskip

(4) $\Rightarrow$ (5). This is well-known. 
 \smallskip

(5) $\Rightarrow$ (6) Given an arbitrary $\Lambda$-module $L$,
there is an epimorphism $f : P \to L$ with a projective $P$. Since 
$\underline{f}$ is also an epimorphism, $L$ has to be projective and, therefore, $\Lambda$ is semisimple. 
 \smallskip

(6) $\Rightarrow$ (1). If $\Lambda$ is semisimple, then all objects of the stable category are zero, hence $\mathcal{Q}$ has a right adjoint, which sends a zero object to zero. 
\end{proof}

\section{Monomorphisms in the stable category}\label{StableMonos}

We begin with a simple observation, which is valid over an arbitrary ring and will be used several times. 
\begin{lemma}\label{split}
 Let $0 \to A \overset{f}{\to} B \overset{g}\to C \to 0$ be a split exact sequence. Then:
 
\begin{enumerate}
 \item[(a)] $\underline{g}$ is an isomorphism in the stable category if and only if $A$ is projective.
 \smallskip
 \item[(b)] $\underline{f}$ is an isomorphism in the stable category if and only if $C$ is projective.
\end{enumerate}
\end{lemma}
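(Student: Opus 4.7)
My plan is to exploit the fact that the splitting furnishes a retraction $r : B \to A$ with $rf = \id_A$ and a section $s : C \to B$ with $gs = \id_C$, linked by the Peirce-type decomposition $\id_B = fr + sg$. In each part, the natural candidate for the stable inverse is the half of this decomposition not appearing in the statement.

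For (a), I would first observe that since $gs = \id_C$, $\underline{g}$ is already a split epimorphism in the stable category, so it is an isomorphism if and only if $\underline{s}\,\underline{g} = \underline{\id_B}$. Substituting $\id_B = fr + sg$, this is equivalent to $fr$ factoring through a projective. One direction is immediate: if $A$ is projective, then $fr$ factors through the projective module $A$ itself. For the converse, I would suppose $fr = vu$ for some $B \xrightarrow{u} P \xrightarrow{v} B$ with $P$ projective, and then compose with $r$ on the left and $f$ on the right, using $rf = \id_A$, to obtain
\[
\id_A \;=\; r(fr)f \;=\; r(vu)f \;=\; (rv)(uf),
\]
so that $\id_A$ factors through $P$. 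This exhibits $A$ as a retract of $P$, hence projective.

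Part (b) I would handle by the symmetric argument: the natural candidate inverse of $\underline{f}$ is $\underline{r}$, and $rf = \id_A$ gives one composite for free; the obstruction to the other is that $sg = \id_B - fr$ factor through a projective, which by the same trick (now pivoting on $gs = \id_C$) is equivalent to $\id_C$ factoring through a projective, i.e., to $C$ being projective.

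I don't expect any genuine difficulty: the lemma is essentially the observation that a retract of a projective is projective, packaged inside the stable category via the factoring-through-projectives formalism. The main thing to be careful about is keeping the Peirce decomposition $\id_B = fr + sg$ correctly oriented so that the sandwiching by $r$ and $f$ (respectively by $g$ and $s$) collapses to $\id_A$ (respectively $\id_C$) on the nose.
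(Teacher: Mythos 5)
Your proof is correct, but it takes a genuinely different route from the paper's. The paper's argument is functorial: the split exact sequence induces a (componentwise) split exact sequence of stable Hom-functors $0 \to \h^{C} \to \h^{B} \to \h^{A} \to 0$, so by Yoneda $\underline{g}$ is an isomorphism precisely when $\h^{A} = 0$, which by Lemma~\ref{criteria} means $A$ is projective. Your argument instead works directly with the splitting data $rf = 1_{A}$, $gs = 1_{C}$, $1_{B} = fr + sg$, reduces ``$\underline{g}$ is an isomorphism'' to ``$fr$ factors through a projective'' (correctly, since a split epimorphism in any category is an isomorphism exactly when its given section is a two-sided inverse), and then uses the sandwiching identity $1_{A} = r(fr)f$ to transfer a factorization of $fr$ through a projective $P$ into a factorization of $1_{A}$ through $P$, exhibiting $A$ as a retract of $P$. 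Both proofs are complete; yours is more elementary and self-contained (it never leaves the module category and needs no functor-category language), while the paper's is shorter once Lemma~\ref{criteria} and the Yoneda reduction are in place and fits the paper's general strategy of testing statements about the stable category against the functors $\h^{X}$ and $\h_{X}$. The one step worth stating explicitly in a write-up is the reduction ``split epi is iso iff the section is a two-sided inverse,'' which you use but do not prove; it is a one-line verification ($\underline{t} = \underline{t}\,\underline{g}\,\underline{s} = \underline{s}$ for any two-sided inverse $\underline{t}$).
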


\begin{proof}
(a) The above split exact sequence gives rise to a c-exact sequence of functors
\[
0 \to \h^{C} \overset{\h^{g}}{\to} \h^{B} \overset{\h^{f}}\to \h^{A} \to 0.
\]
By relations (i) and (ii) on p.~\pageref{underline} and Yoneda's lemma, $\underline{g}$ is an isomorphism if and only if $\h^{g}$ is. The latter is equivalent to $\h^{A} = 0$, i.e., to $A$ being projective.
\smallskip

(b) Similar to (a).
\end{proof}

Over an arbitrary ring, if a module homomorphism is a split monomorphism or a split epimorphism, then the same is obviously true for its class in the stable category. For a partial converse, we have

\begin{lemma}\label{split-mono}
Let $\Lambda$ be a ring and $f : A \to B$ a homomorphism of $\Lambda$-modules. If $A$ is totally stable then the following are equivalent: 
\begin{enumerate}
 \item $\underline{f}$ is a split monomorphism;
 \smallskip
 \item $f$ is a split monomorphism.
\end{enumerate}
\end{lemma}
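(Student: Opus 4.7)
The direction (2) $\Rightarrow$ (1) is immediate from functoriality of $\mathcal{Q}$: a splitting $s : B \to A$ of $f$ maps to a splitting $\underline{s}$ of $\underline{f}$, since $\mathcal{Q}$ sends identities to identities. So the content of the lemma lies in (1) $\Rightarrow$ (2), and the plan is to exploit the total stability of $A$ to upgrade a stable splitting into an honest one.

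Suppose $\underline{f}$ is split by some $\underline{g} : \underline{B} \to \underline{A}$, represented by a homomorphism $g : B \to A$. Then $\underline{g}\,\underline{f} = \id_{\underline{A}}$, meaning that $gf - \id_A$ factors through a projective module. In particular, $gf$ is an endomorphism of $A$ whose class in the stable category is the identity, hence is certainly an automorphism modulo projectives. Invoking the hypothesis that $A$ is totally stable, this forces $gf$ itself to be an automorphism of $A$ in $\Lambda$-$\mathrm{Mod}$.

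Setting $s := (gf)^{-1} g : B \to A$, I get $sf = (gf)^{-1}(gf) = \id_A$, exhibiting $f$ as a split monomorphism. I do not expect any obstacles: the argument is a one-line application of the definition of total stability, and the only nontrivial input is the characterization built into that definition. No appeal to the left hereditary hypothesis or to earlier structural results is required, so the proof is purely formal.
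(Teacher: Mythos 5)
Your proof is correct and follows the same route as the paper: from $\underline{g}\,\underline{f}=1_{\underline{A}}$ conclude via total stability that $gf$ is an automorphism, then take $(gf)^{-1}g$ as the honest splitting. The paper's version is merely more terse, leaving the final construction of the left inverse implicit.
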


\begin{proof}
Assume that $\underline f$ is a split monomorphism with splitting $\underline{g} : B \to A$. Then $\underline{gf} = 1_{A}$ and hence $gf$ is an isomorphism. The other implication is immediate.
\end{proof}

Another useful fact about morphisms in the stable category is given by 

\begin{proposition}\label{half-exact}
\begin{itemize}
 \item[(a)] For any module $X$, the functor $\h^{X}$ is half-exact.
 \smallskip
\item[(b)] For any short exact sequence $0 \to A \overset{g}{\to} B \overset{f}{\to} C \to 0$ in $\Lambda$-$\mathrm{Mod}$, the sequence
\[
\h_{A} \overset{\h_{g}}{\longrightarrow} \h_{B} \overset{\h_{f}}{\longrightarrow} \h_{C}
\] 
is c-exact.
\end{itemize}
  \end{proposition}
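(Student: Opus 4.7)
The plan is to observe that both parts unwind to a single common assertion: for every $\Lambda$-module $M$ and every short exact sequence $0 \to A \overset{g}{\to} B \overset{f}{\to} C \to 0$ in $\Lambda$-$\Mod$, the induced sequence
\[
\underline{\Hom}(M,A) \overset{\underline{g}_{\ast}}{\longrightarrow} \underline{\Hom}(M,B) \overset{\underline{f}_{\ast}}{\longrightarrow} \underline{\Hom}(M,C)
\]
of abelian groups is exact at the middle. In part~(a), $M$ is the fixed module $X$; in part~(b), $M$ is allowed to vary, and c-exactness at $\h_{B}$ of the stated sequence of natural transformations is, by definition, exactness at $\underline{\Hom}(M,B)$ for every $M$. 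Hence a single argument will settle both parts.

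To prove the common assertion, I would first note the easy inclusion: since $fg = 0$ in $\Lambda$-$\Mod$ and $\mathcal{Q}$ is additive, $\underline{f}\,\underline{g} = 0$, so the image of $\underline{g}_{\ast}$ lies inside the kernel of $\underline{f}_{\ast}$. For the reverse inclusion, I would take $\beta : M \to B$ with $\underline{f\beta} = 0$ and choose a factorization $f\beta = p\phi$ through some projective module $P$, with $\phi : M \to P$ and $p : P \to C$. The pivotal move is to lift $p$ along the surjection $f$ using the projectivity of $P$, producing $\tilde{p} : P \to B$ with $f\tilde{p} = p$. Then $f(\beta - \tilde{p}\phi) = 0$, so exactness of the original sequence yields $\alpha : M \to A$ with $g\alpha = \beta - \tilde{p}\phi$. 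Since $\tilde{p}\phi$ factors through the projective $P$, it vanishes in the stable category, giving $\underline{\beta} = \underline{g\alpha} = \underline{g}_{\ast}(\underline{\alpha})$, as desired.

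The only delicate point is the projective-lifting step, which relies on $f$ being genuinely surjective in $\Lambda$-$\Mod$ (not merely an epimorphism in the stable sense) and on $P$ being genuinely projective. This is precisely the mechanism that transfers the problem from the stable setting back to the module category, where such a lift exists, and then back again. No exactness at the ends is claimed or needed, which matches the fact that neither $\h^{X}$ nor $\h_{X}$ is, in general, left- or right-exact.
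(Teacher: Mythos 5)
Your proof is correct and follows essentially the same route as the paper's: factor $f\beta$ through a projective, lift that projective over the surjection $f$, and pull the difference back through $g$ by exactness. The paper likewise proves the middle-exactness statement once (for part (a)) and observes that (b) is the same assertion read off componentwise, exactly as you note.
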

 
\begin{proof}
(a) Let $0 \to A \overset{g}{\to} B \overset{f}\to C \to 0$ be a short exact sequence. It is clear that $\h^{X}_{f} \h^{X}_{g} =0$, so we only need to show that whenever $\h^{X}_{f} (\underline{j}) =0$ for some $j : X \to B$, there is  $l : X \to A$ such that $\h^{X}_{g}(\underline{l}) = \underline{j}$. Thus we have a commutative diagram of solid arrows
\[
\xymatrix
	{
	&& X \ar@{..>}[ld]_{l} \ar[r] \ar[d]^{j} & Q \ar[d] \ar@{..>}[dl] & \\
	0 \ar[r]  & A  \ar[r]^{g} & B \ar[r]^{f} & C \ar[r] & 0 
	}
\] 
where $Q$ is a projective module. The dotted arrow from $Q$ to $B$ is a lifting of $Q \to C$. The difference between $j$ and the composition $X \to Q \to B$ composes with $f$ to zero, and thus lifts over $g$, producing a second dotted arrow, which is the desired homomorphism $l$.

(b) This is a restatement of (a) in terms of the contravariant Hom-functor.
\end{proof}

\begin{proposition}\label{mono}
 Let $\Lambda$ be an arbitrary ring and $f : A \to B$ a homomorphism of $\Lambda$-modules. 
 
\begin{itemize}
 \item[(a)] If $\underline{f}$ is a monomorphism in the stable category, then $\mathrm{ker}f$ factors through a projective.
 \smallskip
 \item[(b)] If $f$ is an epimorphism and $\mathrm{ker}f$ factors through a projective, then $\underline{f}$ is a monomorphism in the stable category.
\end{itemize}
\end{proposition}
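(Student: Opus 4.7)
The plan is to handle (a) and (b) separately, by quite different tactics.

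For (a), I would proceed essentially tautologically. Let $k := \mathrm{ker}\,f$ in $\Lambda$-$\Mod$. Then $fk = 0$ gives $\underline{f}\,\underline{k} = \underline{0} = \underline{f}\,\underline{0}$ in the stable category, and the monomorphism hypothesis on $\underline{f}$ allows cancellation to yield $\underline{k} = 0$, which by definition means that $k$ factors through a projective.

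For (b), I would invoke the c-exactness established in Proposition~\ref{half-exact}. Since $f$ is an epimorphism, its kernel $k : K \to A$ fits into a short exact sequence $0 \to K \overset{k}{\to} A \overset{f}{\to} B \to 0$ in $\Lambda$-$\Mod$, and that proposition gives, for every $\Lambda$-module $X$, an exact sequence of abelian groups
\[
(\underline{X, K}) \longrightarrow (\underline{X, A}) \longrightarrow (\underline{X, B}).
\]
The hypothesis that $k$ factors through a projective amounts to $\underline{k} = 0$, which renders the leftmost arrow identically zero; by exactness the second arrow $\underline{g} \mapsto \underline{fg}$ therefore has trivial kernel for every $X$, and this is precisely the Yoneda criterion for $\underline{f}$ to be a monomorphism in the stable category.

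For a fully hands-on verification of (b), one can also argue directly: given $g : X \to A$ with $\underline{fg} = 0$, write $fg = \beta\alpha$ with $P$ projective, $\alpha : X \to P$, $\beta : P \to B$; lift $\beta$ through the surjection $f$ to some $\beta' : P \to A$; observe that $g - \beta'\alpha$ is killed by $f$, so it factors through $k$ and hence through a projective; finally, add back $\beta'\alpha$, which itself factors through $P$, to see that $g$ factors through a projective. I expect no genuine obstacle: the only delicate point is identifying precisely where the surjectivity of $f$ enters in (b), namely in lifting $\beta$ through $f$ (equivalently, in placing $k$ into a short exact sequence), which is also the reason why the converse of (a) requires a hypothesis of this kind.
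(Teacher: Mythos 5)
Your proposal is correct and follows essentially the same route as the paper: part (a) is the same tautological cancellation argument, and part (b) is exactly the paper's appeal to the c-exactness of $\h_{K} \to \h_{A} \to \h_{B}$ from Proposition~\ref{half-exact} together with $\underline{\ker f} = 0$. Your ``hands-on'' variant of (b) simply unwinds the proof of Proposition~\ref{half-exact}(a), so it adds nothing genuinely different.
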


\begin{proof}
(a) Since $f \ker f = 0$ and $\underline{f}$ is a monomorphism, 
$\ker f$ is zero in the stable category, and hence factors through a projective.
\smallskip

(b) We need to show that $h_{\underline{f}}$ is a c-monomorphism. But this follows from Proposition~\ref{half-exact} applied to the short exact sequence $0 \to \Ker f \overset{\ker f}{\to} A \overset{f}{\to} B \to 0$, and the fact that $h_{\underline{\ker f}} = 0$.
\end{proof}

The just proved result allows to construct an example of a simple module which has a nonzero proper subobject in the stable category.

\begin{example}
Let $R$ be a commutative Gorenstein local ring of Krull dimension 1 and of infinite global dimension with residue field $k$. (Take, for example, the local ring of a singular point on a plane algebraic curve.) Then the projective dimension of $k$ is infinite and 
$\Ext^{1}(k,R) \simeq k  \neq 0$. Let $0 \to R \to A \overset{p}{\to} k \to 0$ be a non-split extension. By Proposition~\ref{mono}, $\underline{p}$ is a monomorphism. Notice that $A$ is nonzero in the stable category, since otherwise $A$ would be projective, making $k$ a  module of finite projective dimension. We claim that, in the stable category, $A$ is a proper subobject of $k$, i.e., $\underline{p}$ is not an isomorphism. To see that, we first dualize the above short exact sequence into $k$, obtaining a long cohomology exact sequence
\[
\xymatrix
	{
	0 \ar[r]
	& (k, R) \ar[r]
	& (k, A) \ar[r]
	& (k, k) \ar[r]
	& \Ext^{1}(k, R) \ldots
	}
\] 
Under the last map, the identity on $k$ goes to the class of the above short exact sequence, which is non-split. Hence this map is nonzero and is therefore an isomorphism. Thus, the first map is also an isomorphism. Since the depth of $R$ is 1, we have $(k,R) = 0$ and therefore $(k,A) = 0$. Now suppose that $\underline{p}$ is an isomorphism. By Heller's Theorem~\ref{heller}, there would be free modules $P$ and 
$Q$ and an isomorphism $A \oplus P \simeq k \oplus Q$. But we just saw that 
$(k,A) = 0$. Furthermore, since $(k,R) = 0$, we also have $(k,P) = 0$. Thus $k$ cannot be a direct summand of the left-hand side, a contradiction. In summary, $A$ is a nonzero proper subobject of $k$.\footnote{The reader familiar with maximal Cohen-Macaulay approximations will immediately recognize in the above short exact sequence a minimal mCM approximation of $k$. This leads to an alternative argument, which goes as follows. Let $0 \to F \to A \overset{p}{\to} k \to 0$, where $F$ is free, be a mCM approximation of $k$. Since the ring is nonregular, $A$ is not free. Thus $\underline{p} : A \to k$ is a nonzero subobject of $k$. Since $(k,P) = 0$ for any free $P$, isomorphisms with domain $k$ lift modulo projectives. Thus, if $\underline{p}$ were an isomorphism, the same would be true for $p$, an obvious contradiction.}
\end{example}

Now we want to look at monomorphisms in the stable category with a fixed codomain and show that, for certain choices of the codomain, all monomorphisms are split. For that, we will use the following result of Hilton-Rees~\cite[Theorem 2.5]{HR} characterizing c-epimorphisms between covariant $\mathrm{Ext}$-functors.

\begin{theorem}\label{hr-epi}
 Let $\mathbf{C}$ be an abelian category with enough projectives, and $f : A \to B$ a morphism in $\mathbf{C}$ such that the induced natural transformation 
 $\mathrm{Ext}^{1}(B,-) \to \mathrm{Ext}^{1}(A,-)$ is a c-epimorphism. Then there are an object $E$ of $\mathbf{C}$, a projective object $Q$ of $\mathbf{C}$ and an exact
 sequence 
 \[
\xymatrix
	{
	0 \ar[r] 
	& Q \ar[r]
	& A \oplus E \ar[r]^>>>>>{f \perp g}
	& B \ar[r]
	& 0
	} 
\] 
Further $Q$ may be taken to be any projective object of $\mathbf{C}$ such that there exists an epimorphism $Q \to A$. \qed
\end{theorem}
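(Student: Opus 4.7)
The plan is to use the hypothesis to lift a projective presentation of $A$ to an extension of $B$, and then recognize the lifted diagram as a pullback, which in an abelian category automatically yields the claimed four-term exact sequence.

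First I would fix any epimorphism $\pi\colon Q\to A$ with $Q$ projective in $\mathbf{C}$, and set $K:=\mathrm{Ker}\,\pi$. The resulting short exact sequence $0\to K\to Q\xrightarrow{\pi}A\to 0$ represents a class $\xi\in\mathrm{Ext}^{1}(A,K)$. Applying the c-epimorphism hypothesis at the object $K$, the map $f^{\ast}\colon\mathrm{Ext}^{1}(B,K)\to\mathrm{Ext}^{1}(A,K)$ is surjective, so I can choose $\eta\in\mathrm{Ext}^{1}(B,K)$ with $f^{\ast}\eta=\xi$, represented by a short exact sequence $0\to K\to E\xrightarrow{g}B\to 0$. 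Because $f^{\ast}$ is pullback along $f$, the equality $f^{\ast}\eta=\xi$ provides a commutative diagram of short exact sequences
\[
\xymatrix@R=1.4em{
0 \ar[r] & K \ar[r] \ar@{=}[d] & Q \ar[r]^{\pi} \ar[d]^{q} & A \ar[r] \ar[d]^{f} & 0 \\
0 \ar[r] & K \ar[r] & E \ar[r]^{g} & B \ar[r] & 0
}
\]
in which the right-hand square is a pullback.

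With this pullback square at hand, the result is an instance of the standard fact that in an abelian category a pullback square whose cospan contains an epimorphism yields a Mayer--Vietoris short exact sequence. Since $g$ is an epimorphism, one obtains
\[
0\to Q\xrightarrow{\binom{\pi}{q}} A\oplus E\xrightarrow{(f,\,-g)} B\to 0,
\]
and absorbing the sign (replacing $g$ with $-g$, which is still an epimorphism with kernel $K$) delivers the desired sequence $0\to Q\to A\oplus E\xrightarrow{f\perp g}B\to 0$. Since the epimorphism $\pi\colon Q\to A$ was arbitrary among projectives admitting one, the final assertion of the theorem is proved as well.

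The one delicate step is the passage from the class equality $f^{\ast}\eta=\xi$ to an actual morphism of extensions with the identity on $K$ and with $\pi$ on the right, and hence to the pullback square. This is standard Ext-bookkeeping, resting on the fact that equivalent extensions have compatible isomorphisms of middle terms and that the pulled-back extension is by construction the top row of a pullback square, but it is the step where one must be most careful about identifying objects rather than isomorphism classes. Once that identification is pinned down, everything else is purely formal homological algebra.
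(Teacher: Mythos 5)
Your argument is correct, and it is essentially the classical Hilton--Rees argument: pick a projective presentation $0\to K\to Q\xrightarrow{\pi}A\to 0$, use surjectivity of $\Ext^{1}(f,K)$ to find an extension of $B$ by $K$ whose pullback along $f$ is that presentation, and read off the four-term sequence from the resulting pullback square (the sign on $g$ being harmlessly absorbed). The paper itself gives no proof --- it imports the statement from Hilton--Rees \cite{HR} with a \emph{qed} --- so there is nothing further to compare; the one ``delicate'' step you flag (that a morphism of short exact sequences which is the identity on kernels has a pullback as its right-hand square) is indeed standard and closes the argument.
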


The promised result can be stated as follows.

\begin{proposition}
 Let $\Lambda$ be an arbitrary ring and $B$ a $\Lambda$-module such that the functor 
$\mathrm{Ext}^{1}(B,-)$ is projectively stable (i.e., vanishes on projectives). Then any monomorphism in the stable category with codomain $B$ splits.
\end{proposition}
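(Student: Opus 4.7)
The plan is to reduce the claim to an application of the Hilton--Rees theorem (Theorem~\ref{hr-epi}), whose hypothesis will be verified by means of Proposition~\ref{mono}(a). By the epimorphism replacement lemma preceding Theorem~\ref{heller}, the isomorphism class of $\underline{f}$ contains a morphism represented by an epimorphism of $\Lambda$-modules; since being a split monomorphism depends only on that isomorphism class, we may assume from the outset that $f:A\to B$ is surjective in $\Lambda$-$\mathrm{Mod}$. Set $K=\Ker f$, giving a short exact sequence $0\to K\to A\xrightarrow{f} B\to 0$.

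The first key step is the verification that the natural transformation $\Ext^{1}(B,-)\to\Ext^{1}(A,-)$ is a c-epimorphism. Because $\underline{f}$ is a monomorphism, Proposition~\ref{mono}(a) yields that the inclusion $K\hookrightarrow A$ factors through some projective $P$. Consequently, for every $\Lambda$-module $X$ the map $\Ext^{1}(A,X)\to\Ext^{1}(K,X)$ factors through $\Ext^{1}(P,X)=0$ and is therefore zero. The long exact sequence of $\Ext^{*}(-,X)$ associated to $0\to K\to A\to B\to 0$ then shows that $\Ext^{1}(B,X)\to\Ext^{1}(A,X)$ is surjective, so Theorem~\ref{hr-epi} is applicable.

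The second step is to extract a splitting from its output. We obtain a module $E$, a projective $Q$, and a short exact sequence
\[
0\to Q\to A\oplus E\xrightarrow{f\perp g} B\to 0.
\]
By the standing hypothesis that $\Ext^{1}(B,-)$ vanishes on projectives, $\Ext^{1}(B,Q)=0$, and this sequence splits. Thus $A\oplus E\cong B\oplus Q$ as $\Lambda$-modules, and under this identification $f\perp g$ corresponds to the projection $B\oplus Q\to B$. Since $Q$ is projective—hence a zero object in $\Lambda$-$\underline{\mathrm{Mod}}$—this projection, and therefore $\underline{f\perp g}$, is an isomorphism in the stable category.

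To conclude, observe that $f=(f\perp g)\circ i_{A}$, where $i_{A}:A\to A\oplus E$ is the canonical inclusion with retraction $\pi_{A}:A\oplus E\to A$. Then $\underline{\pi_{A}}\circ(\underline{f\perp g})^{-1}:B\to A$ is a retraction of $\underline{f}$ in $\Lambda$-$\underline{\mathrm{Mod}}$, which proves that $\underline{f}$ is a split monomorphism. The main obstacle is the first step: recognising that Proposition~\ref{mono}(a) is exactly what one needs to invoke Hilton--Rees; once this is in place, the projective stability of $\Ext^{1}(B,-)$ automatically upgrades the Hilton--Rees sequence to a splitting that is trivialised on passing to the stable category.
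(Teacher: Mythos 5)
Your proof is correct and follows essentially the same route as the paper's: reduce to an epimorphism, use Proposition~\ref{mono}(a) to see that $\ker f$ factors through a projective, deduce via the long exact sequence that $\Ext^{1}(f,-)$ is a c-epimorphism, apply Hilton--Rees, and split the resulting sequence using the projective stability of $\Ext^{1}(B,-)$. Your write-up merely makes explicit a couple of steps the paper leaves implicit (the verification that $\Ext^{1}(A,X)\to\Ext^{1}(K,X)$ vanishes, and the exact form of the retraction of $\underline{f}$).
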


\begin{proof}
 Suppose $f : A \to B$ is such that $\underline{f}$ is a monomorphism. We want to show that $\underline{f}$ splits. Without loss of generality, we may assume that $f$ is epic, so that we have a short exact sequence 
 $0 \to \Ker f \overset{\ker f}{\to} A \overset{f}{\to} B \to 0$. By Proposition~\ref{mono}, 
 $\ker f$ factors through a projective. Passing to the corresponding long cohomology c-exact sequence of functors, we have that 
 $\mathrm{Ext}^{1}(f,-) : \mathrm{Ext}^{1}(B,-) \to \mathrm{Ext}^{1}(A,-)$ is a c-epimorphism. By Theorem~\ref{hr-epi}, we have a short exact sequence 
 $0 \to Q \to A \oplus E \overset{f \perp g}{\to} B \to 0$ with $Q$ projective. By assumption, this sequence splits and, therefore, $f \perp g$ induces an isomorphism modulo projectives. Since $f = (f \perp g) i$, where $i : A \to A \oplus E$ is the canonical split injection, we have that $\underline{f}$ is the composition of a split monomorphism and an isomorphism, and is thus a split monomorphism.
\end{proof}

Notice that the functor $\mathrm{Ext}^{1}(B,-)$ is projectively stable for each $B$ if and only if $\Lambda$ is quasi-Frobenius. In particular, we have

\begin{corollary}
 If $\Lambda$ is quasi-Frobenius, then  any monomorphism in the stable category of 
 $\Lambda$ splits.\footnote{This also follows at once from the well-known fact that the stable category of a quasi-Frobenius ring is triangulated.} \qed
\end{corollary}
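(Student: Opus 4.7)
The corollary is an immediate consequence of the preceding Proposition together with the remark (just before the statement) that $\mathrm{Ext}^{1}(B,-)$ is projectively stable for every $B$ exactly when $\Lambda$ is quasi-Frobenius. So the plan is simply to invoke these two facts in turn.

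More concretely, I would first recall the characterization of quasi-Frobenius rings that is relevant here: $\Lambda$ is quasi-Frobenius if and only if every projective $\Lambda$-module is injective (equivalently, every injective is projective). From this it follows that for any module $B$ and any projective module $P$ one has $\mathrm{Ext}^{1}(B,P) = 0$, since $P$ is injective. Thus the functor $\mathrm{Ext}^{1}(B,-)$ vanishes on projectives, i.e., is projectively stable, for every $B$.

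Having verified the hypothesis of the preceding Proposition for an arbitrary codomain $B$, I would then apply that Proposition: every monomorphism $\underline{f} : \underline{A} \to \underline{B}$ in the stable category splits. Since $B$ was arbitrary, all monomorphisms in $\Lambda$-$\Modst$ split.

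There is really no obstacle here; the work has already been done in the Proposition and in the ring-theoretic identification of quasi-Frobenius rings. The only step that might be worth spelling out is the passage from ``projectives are injective'' to ``$\mathrm{Ext}^{1}(B,-)$ vanishes on projectives,'' which is standard. (As the footnote in the statement indicates, the same conclusion can be read off from the fact that the stable category of a quasi-Frobenius ring is triangulated, since monomorphisms in a triangulated category always split; but going through the Proposition is the route internal to this paper.)
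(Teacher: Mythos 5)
Your proof is correct and follows exactly the paper's route: the corollary is stated as an immediate consequence of the preceding Proposition together with the observation that $\mathrm{Ext}^{1}(B,-)$ is projectively stable for all $B$ precisely when $\Lambda$ is quasi-Frobenius. Spelling out that this follows from ``projectives are injective'' is a reasonable (and standard) elaboration, but it does not change the argument.
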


Now we focus on monomorphisms in the stable category of a left hereditary ring.

\begin{lemma}\label{basic}
Let $\Lambda$ be a left hereditary ring, and  $f:A\rightarrow B$ a
homomorphism of $\Lambda$-modules. The following are equivalent:

\begin{enumerate}
 \item $\underline{f} = 0$;
\smallskip

\item there are submodules $K$ and $P$ of $A$, where $P$ is projective, such that  $A = K \oplus P$, and a commutative diagram 

\[
 \xymatrix
	{
	K \oplus P \ar[rr]^{f} \ar@{->>}[rd]^{p} & &B \\
	& P \ar[ru]
	}
\]
where $p$ is the canonical projection.

\end{enumerate}
\end{lemma}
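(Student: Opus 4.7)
The plan is to verify $(2) \Rightarrow (1)$ by observation, and attack $(1) \Rightarrow (2)$ by factoring $f$ through a projective and exploiting the hereditary hypothesis to move the factorization inside $A$ itself.

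For $(2) \Rightarrow (1)$, since $P$ is projective and $f$ factors through $P$, it factors through a projective, so $\underline{f} = 0$.

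For $(1) \Rightarrow (2)$, I would assume $\underline{f} = 0$ and pick a factorization $f = \beta\alpha$ with $\alpha : A \to Q$, $\beta : Q \to B$, and $Q$ projective. Let $P' := \mathrm{Im}\,\alpha \subseteq Q$. Because $\Lambda$ is left hereditary, every submodule of a projective is projective, so $P'$ is projective. Let $\bar\alpha : A \twoheadrightarrow P'$ be the corestriction of $\alpha$. Since $P'$ is projective, $\bar\alpha$ splits; pick a section $s : P' \to A$, set $P := s(P') \subseteq A$ and $K := \ker\bar\alpha$. Then $A = K \oplus P$, and identifying $P$ with $P'$ via $s$, the canonical projection $p : A \to P$ coincides with $\bar\alpha$. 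Defining $g : P \to B$ by $g := \beta \circ (\text{inclusion } P \hookrightarrow Q)$, the computation $gp = \beta \bar\alpha = \beta\alpha = f$ yields the required factorization, and $P$ is projective as needed.

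The entire argument rests on one fact, so there is no real obstacle: the hereditary hypothesis is used exactly once, to conclude that the image $\mathrm{Im}\,\alpha$ inside the projective $Q$ is itself projective, which is what allows the surjection $A \to \mathrm{Im}\,\alpha$ to split and thereby realize the projective factor as an actual direct summand of $A$. Without this hypothesis, $f$ still factors through a projective, but one cannot in general arrange for that projective to sit inside $A$ as a summand.
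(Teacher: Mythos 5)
Your proof is correct and follows essentially the same route as the paper: factor $f$ through a projective, use the hereditary hypothesis to conclude that the image of $A$ in that projective is itself projective, split the resulting surjection to realize that image as a direct summand $P$ of $A$, and check the factorization. The paper states this in one sentence and leaves the verification to the reader; your write-up just makes the splitting and the identification $g p = \beta\alpha = f$ explicit.
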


\begin{proof} (1) $\Rightarrow$ (2). If $f$ factors through a projective, then, since $\Lambda$ is left hereditary, the image of $A$ in that projective is itself projective, and we take that image (or, more precisely, its image in $A$ under a splitting) for $P$. The rest is clear.
\smallskip

The implication (2) $\Rightarrow$ (1) is trivial.
\end{proof}

Next we characterize left hereditary rings by properties of their stable categories.

\begin{theorem}\label{characterization}
 Let $\Lambda$ be a ring. The following are equivalent:
 
\begin{enumerate}
 \item $\Lambda$ is left hereditary;
 \smallskip
 
 \item given a homomorphism $f : A \to B$ of $\Lambda$-modules, $f$ factors through a projective if and only if $f : A \to f(A)$ does;
 \smallskip
 
 \item given a homomorphism $f : A \to B$ of $\Lambda$-modules, $\underline{f}$ is a monomorphism in the stable category if and only if $\mathrm{ker}f$ factors through a projective;
 \smallskip
 
 \item given a homomorphism $f : A \to B$ of $\Lambda$-modules, $\underline{f}$ is a monomorphism in the stable category if and only if $\mathrm{Ker} f$ is projective;
 \smallskip
 
 \item the quotient functor $\mathcal{Q}$ preserves monomorphisms;
 \smallskip
 
 \item for any module $X$, the functor $\h^{X}$ is left-exact;
 \smallskip
 
 \item the quotient functor $\mathcal{Q}$ preserves kernels;
 \smallskip 
 
\item the quotient functor $\mathcal{Q}$ preserves finite limits.
 
\end{enumerate}
\end{theorem}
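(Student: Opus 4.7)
The plan is to verify the eight conditions are equivalent by first establishing $(1) \Leftrightarrow (5)$ directly, then propagating through $(5) \Leftrightarrow (6) \Leftrightarrow (7) \Leftrightarrow (8)$ using standard additive and abelian category arguments, and finally handling $(2)$, $(3)$, and $(4)$ by connecting each separately back to $(1)$.

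For $(1) \Rightarrow (5)$, the argument runs as follows. Given an injective homomorphism $f : A \to B$ and a map $g : X \to A$ with $fg$ factoring through a projective as $X \xrightarrow{u} P \xrightarrow{v} B$, the submodule $u(X) \subseteq P$ is projective by hereditariness. Since $v(u(X)) = fg(X) \subseteq f(A)$ and $f$ is injective, the restriction $v|_{u(X)}$ lifts through $f$ to a map $w : u(X) \to A$, and one verifies directly that $g$ equals $w$ composed with the corestriction of $u$ to $u(X)$. Hence $g$ factors through the projective $u(X)$, so $\underline{g} = 0$ and $\underline{f}$ is monic. For $(5) \Rightarrow (1)$: if $M \subseteq P$ with $P$ projective, the inclusion is a monomorphism in $\Lambda$-$\mathrm{Mod}$, hence monic in the stable category by $(5)$; as $\underline{P} = 0$, this forces $\underline{M} = 0$, so $M$ is projective.

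The block $(5) \Leftrightarrow (6) \Leftrightarrow (7) \Leftrightarrow (8)$ is essentially formal. By Yoneda, $\underline{f}$ is monic iff $\h^{X}(\underline{f})$ is injective for every $X$, and since $\h^{X}$ is half-exact by Proposition~\ref{half-exact}, this is exactly the condition for each $\h^{X}$ to be left-exact, giving $(5) \Leftrightarrow (6)$. Left-exactness of every $\h^{X}$ is then equivalent, by another Yoneda argument, to $\mathcal{Q}$ preserving kernels, yielding $(6) \Leftrightarrow (7)$. Finally, the additivity of $\mathcal{Q}$ forces preservation of biproducts, and combined with preservation of kernels (writing equalizers as kernels of differences), upgrades $(7)$ to preservation of all finite limits $(8)$.

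For the remaining conditions: $(1) \Rightarrow (2)$ by replacing any factorization $f = vu$ with $u : A \to Q$ projective by a factorization through the projective submodule $u(A) \subseteq Q$; $(2) \Rightarrow (1)$ by applying $(2)$ to the tautological factorization of an inclusion $M \hookrightarrow P$ through the projective $P$, which forces $\mathrm{id}_M$ to factor through a projective and hence makes $M$ projective. For $(3)$, one direction is Proposition~\ref{mono}(a) and needs no hypothesis; the converse under $(1)$ follows by decomposing $f = \bar{f} \pi$ with $\pi : A \twoheadrightarrow A/\mathrm{Ker}\,f$ surjective and $\bar{f}$ injective, lifting along $\pi$ via projectivity, and absorbing the error term through the projective via which $\mathrm{ker}\, f$ factors. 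The equivalence $(3) \Leftrightarrow (4)$ under $(1)$ is immediate, as $\mathrm{ker}\, f$ factors through a projective iff $\mathrm{Ker}\,f$ embeds in a projective iff $\mathrm{Ker}\,f$ is itself projective (by hereditariness); and $(4) \Rightarrow (1)$ is the same $M \hookrightarrow P$ trick as $(5) \Rightarrow (1)$. The main obstacle throughout is the lifting argument in $(1) \Rightarrow (5)$ (and the analogous step in $(1) \Rightarrow (3)$): hereditariness is used precisely to pass from an abstract factorization through a projective to one through a projective submodule of that projective, so that the inverse of $f$ on its image can be applied.
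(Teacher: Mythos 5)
Your proposal is correct in substance but organizes the equivalences quite differently from the paper. The paper proves a single cycle $(1)\Rightarrow(2)\Rightarrow\cdots\Rightarrow(8)\Rightarrow(1)$, so each intermediate condition is used as the working hypothesis for the next step; you instead use a hub-and-spoke arrangement centered on $(1)$ and $(5)$, with the block $(5)\Leftrightarrow(6)\Leftrightarrow(7)\Leftrightarrow(8)$ handled formally via Yoneda and half-exactness. The underlying ingredients are the same in both treatments --- the half-exactness of $\h^{X}$ (Proposition~\ref{half-exact}), Proposition~\ref{mono}, the observation that over a hereditary ring the image of a map into a projective is projective (the paper packages this as Lemma~\ref{basic}), and the ``inclusion into a projective'' trick for getting back to $(1)$ --- but your routing buys some genuinely cleaner steps: your $(2)\Rightarrow(1)$ (apply $(2)$ to an inclusion $M\hookrightarrow P$ to force $\mathrm{id}_M$ to factor through a projective) is a direct one-liner the paper does not have, and your $(6)\Rightarrow(7)$ avoids the paper's detour through epi-mono factorizations because left-exactness applies verbatim to $0\to\Ker f\to A\to B$. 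The cost is that your $(1)\Rightarrow(3)$ must redo work that the cycle gets for free from $(2)$. One loose end: as written you establish $(1)\Rightarrow(3)$ and $(3)\Leftrightarrow(4)$ \emph{only under} $(1)$, so no implication out of $(3)$ back into the established cluster is actually recorded; you need to note that $(3)\Rightarrow(1)$ follows by the very same $M\hookrightarrow P$ trick you already invoke for $(4)\Rightarrow(1)$ and $(5)\Rightarrow(1)$ (the kernel of the inclusion is zero, hence factors through a projective, so $(3)$ makes the inclusion a stable monomorphism into a zero object). With that one sentence added, the argument is complete.
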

 
\begin{proof}
 (1) $\Rightarrow$ (2). The ``if'' part is trivial and is true for any ring.
 If $A \overset{f_{1}}{\to} P \overset{f_{2}}{\to} B$ is a factorization of $f$ with $P$ projective, then, by Lemma~\ref{basic}, we may assume that~$f_{1}$ is onto. But in that case, $f(A) = f_{2}(P)$, and therefore $A \overset{f_{1}}{\to} P \overset{f_{2}}{\to} f(A)$ is the desired factorization of $f : A \to f(A)$.
 \smallskip
 
 (2) $\Rightarrow$ (3). 
 The ``only if'' is true by Proposition~\ref{mono}, (a).
 
 Conversely, suppose that the kernel of $f$ factors through a projective.  Assuming that $g:C \rightarrow A$ is a module homomorphism such that $fg$ factors through a projective, we want to show that, in the stable category, $\underline{g} = 0$. 
By (2), the map $fg : C \to fg(C)$ also factors through a projective. As $fg(C) \subset f(A)$, the same is true for $fg : C \to f(A)$. But  $f: A  \to f(A)$, being an epimorphism with kernel $\ker f$ represents, by Proposition~\ref{mono}, (b) a monomorphism in the stable category. Hence $\underline{g} = 0$.
\smallskip

(3) $\Rightarrow$ (4). 
 The ``if'' part is immediate. For the ``only if'' part, assume that  $\underline{f}$ is a monomorphism. By (3), $\mathrm{ker}f$ factors through a projective: 
 $\mathrm{Ker}f \overset{i}{\to} P \to A$. Since $\mathrm{ker}\,i$ is a zero map, (3) implies that $\underline{i}$ is a monomorphism. Since $P$ is zero in the stable category, $\mathrm{Ker}f$ has to be projective, too.
 \smallskip
 
 (4) $\Rightarrow$ (5).
 Trivial.
 \smallskip
 
 (5) $\Rightarrow$ (6)
 By Proposition~\ref{half-exact}, $\h^{X}$ is half-exact. Thus 
 $\h^{X}$ is left-exact for all $X$ if and only if $\h_{f}$ is a c-monomorphism for any monomorphism $f$ of $\Lambda$-modules. But  $\h_{f} = h_{\underline{f}}$, which is a c-monomorphism since $\underline{f}$ is a monomorphism by assumption.
 \smallskip
  
 (6) $\Rightarrow$ (7). Let $\alpha$ be a monomorphism of 
 $\Lambda$-modules. By the left-exactness, $\h^{X}({\alpha}) = h^{X}(\underline{{\alpha}})$ is a monomorphism for any module 
 $X$, which means that  $\underline{\alpha}$ is a monomorphism, and therefore $\mathcal{Q}$ preserves monomorphisms. This, together with the epi-mono factorization of an arbitrary homomorphism $f : A \to B$, shows that $\underline{f} : A \to B$ has a kernel if and only if $\underline{f} : A \to f(A)$ does, and when they do, the two kernels are isomorphic. Thus, we may assume that $f$ is an epimorphism. 
 
 We want to show that $\underline{\ker f}$ is a kernel of 
 $\underline{f}$ in the stable category. This means that  
 $\underline{\mathrm{ker}f}$ has a universal lifting property for the morphisms $X \to A$ whose compositions with $\underline{f}$ are zero.  The existence of a lifting follows from the half-exactness of 
 $\h^{X}$ applied to the short exact sequence  
 $0 \to \Ker f \overset{\ker f}{\to} A \overset{f}{\to} B \to 0$. The uniqueness of a lifting follows from the fact that  
 $\underline{\mathrm{ker}f}$ is a monomorphism. 
 \smallskip

(7) $\Rightarrow$ (8). As we mentioned above, $\mathcal{Q}$ preserves biproducts, and hence preserves finite products. If it also preserves kernels, it preserves equalizers. Then, as is well-known, it preserves finite limits.

(8) $\Rightarrow$ (1). Since $\mathcal{Q}$ preserves finite limits, it preserves equalizers, and therefore it preserves kernels. Suppose $\Lambda$ is not left hereditary. Then there is a projective module $P$ containing a non-projective submodule $A$. If $f$ is the inclusion of $A$ in $P$, then  $\ker f = 0$, whereas $\ker \underline{f}$ is not zero because $P$ is projective and $A$ is not, a contradiction. 
\end{proof}
 
 The just proved theorem immediately implies

\begin{proposition}\label{kernel}
The stable category of a left hereditary ring has kernels, and hence is finitely complete. \qed
\end{proposition}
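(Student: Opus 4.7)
The plan is to deduce the proposition directly from Theorem~\ref{characterization}, together with the general fact that an additive category with kernels is finitely complete.

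First, I would observe that since $\Lambda$ is left hereditary, condition (7) of Theorem~\ref{characterization} holds, i.e., the quotient functor $\mathcal{Q}$ preserves kernels. Concretely, given any morphism $\underline{f} : \underline{A} \to \underline{B}$ in the stable category, pick a representative $f : A \to B$ in $\Lambda$-$\Mod$; then $\underline{\ker f}$, equipped with the stable class of the module-theoretic kernel inclusion, is a kernel of $\underline{f}$ in $\Lambda$-$\Modst$. This gives existence of kernels.

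Next, the stable category is additive, and its biproduct is induced by the biproduct in $\Lambda$-$\Mod$ (as recalled in Section~\ref{Notation}); in particular, $\Lambda$-$\Modst$ has all finite products. Given any two parallel morphisms $\underline{f}, \underline{g} : \underline{A} \to \underline{B}$, their equalizer is the kernel of $\underline{f} - \underline{g} = \underline{f - g}$, which exists by the previous step. Thus $\Lambda$-$\Modst$ has finite products and equalizers, and hence, by the standard construction of finite limits from products and equalizers, is finitely complete. Alternatively, one can invoke condition (8) of Theorem~\ref{characterization} directly: since $\mathcal{Q}$ preserves finite limits and $\Lambda$-$\Mod$ is complete, every finite diagram in $\Lambda$-$\Modst$ lifts (up to the density of $\mathcal{Q}$ on objects and its fullness on morphisms) to a finite diagram in $\Lambda$-$\Mod$, whose limit is sent by $\mathcal{Q}$ to a limit of the original diagram.

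There is essentially no obstacle here; the proposition is a packaging of results already in place. The only minor subtlety is to ensure that ``preserves kernels'' actually yields ``has kernels'' in the target category, but this is immediate from the surjectivity of $\mathcal{Q}$ on objects and the fullness of $\mathcal{Q}$ on morphisms: every morphism in $\Lambda$-$\Modst$ is of the form $\underline{f}$ for some module homomorphism $f$, so a kernel of $\underline{f}$ is produced by applying $\mathcal{Q}$ to a kernel of $f$.
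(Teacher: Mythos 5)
Your proposal is correct and follows the same route as the paper, which derives the proposition as an immediate consequence of Theorem~\ref{characterization}: the quotient functor preserves kernels, is full and surjective on objects, so kernels exist in the stable category, and finite completeness then follows from additivity (finite products) plus equalizers as kernels of differences. The paper leaves all of this implicit with a \qed, so your spelled-out version adds nothing new but is accurate.
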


\section{Epimorphisms in the stable category}\label{StableEpis}
We begin by general remarks about epimorphisms in the stable categories. Their proofs are analogous to those of their counterparts in the previous section, and are therefore omitted.

\begin{lemma}\label{epi}
 Let $\Lambda$ be a ring and $f : A \to B$ a homomorphism of $\Lambda$-modules. If  $\underline{f}$ is an epimorphism in the stable category, then $\mathrm{coker}f$ factors through a projective. If, in addition, $A$ is projective, then so is $B$.  \qed
\end{lemma}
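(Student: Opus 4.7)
The plan is to mimic the proof of Proposition~\ref{mono}(a), using cokernels in place of kernels. For the first assertion, I would observe that $(\mathrm{coker}\,f)\circ f = 0$ by the definition of the cokernel. Applying the quotient functor $\mathcal{Q}$, we get $\underline{\mathrm{coker}\,f}\circ\underline{f} = 0 = 0\circ\underline{f}$ in the stable category. Since $\underline{f}$ is assumed to be an epimorphism, cancellation on the right gives $\underline{\mathrm{coker}\,f} = 0$, which is exactly the statement that $\mathrm{coker}\,f$ factors through a projective.

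For the second assertion, suppose that in addition $A$ is projective. By Lemma~\ref{criteria}, $A$ is a zero object in the stable category, so $\underline{f} = 0$. But $\underline{f}$ is an epimorphism, so for any two morphisms $\underline{u},\underline{v} : B \to X$ in the stable category, the equality $\underline{u}\circ\underline{f} = \underline{v}\circ\underline{f}$ (which holds trivially since both sides are zero) forces $\underline{u} = \underline{v}$. Taking $\underline{u} = 1_{\underline{B}}$ and $\underline{v} = 0$, we conclude $1_{\underline{B}} = 0$, so $B$ is a zero object in the stable category, i.e., $B$ is projective (again by Lemma~\ref{criteria}).

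There is no serious obstacle here: both parts are one-line consequences of the epimorphism property together with the characterization of zero objects in Lemma~\ref{criteria}, exactly parallel to the monomorphism case already handled.
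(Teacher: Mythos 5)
Your proposal is correct and matches the paper's intent exactly: the paper omits the proof of Lemma~\ref{epi}, stating that it is analogous to the monomorphism case (Proposition~\ref{mono}(a)), and your dualization --- cancelling $\underline{f}$ on the right to get $\underline{\mathrm{coker}\,f}=0$, and then using that a zero epimorphism forces $1_{\underline{B}}=0$ --- is precisely that analogue. No gaps.
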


\begin{lemma}
Let $\Lambda$ be a ring and $f : A \to B$ a homomorphism of $\Lambda$-modules. 
If~$B$ is totally stable or if $\Lambda$ is left hereditary and $B$ is stable, then the following are equivalent: 
\begin{enumerate}
 \item $\underline{f}$ is a split epimorphism;
 \smallskip
 \item $f$ is a split epimorphism. \qed
\end{enumerate} 
\end{lemma}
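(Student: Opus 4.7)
The plan is to mirror, step for step, the argument given for Lemma~\ref{split-mono}, since the present claim is simply its Eilenberg dual, with ``totally stable codomain'' replacing ``totally stable domain''. First I would dispose of the easy implication (2) $\Rightarrow$ (1): the quotient functor $\mathcal{Q}$ is additive and so sends a splitting $f s = 1_{B}$ in $\Lambda$-$\mathrm{Mod}$ to a splitting $\underline{f}\,\underline{s} = \underline{1_{B}}$ in $\Lambda$-$\underline{\mathrm{Mod}}$.

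For the nontrivial direction (1) $\Rightarrow$ (2), I would first assume that $B$ is totally stable and choose a splitting $\underline{s} : B \to A$ of $\underline{f}$. Lifting the identity $\underline{f s} = \underline{1_{B}}$ back to $\Lambda$-$\mathrm{Mod}$, the composition $f s$ is an endomorphism of $B$ that agrees with $1_{B}$ modulo a morphism factoring through a projective. In particular $\underline{f s} = \underline{1_{B}}$ is an automorphism in the stable category, so by the defining property of totally stable modules $f s$ is an honest automorphism of $B$. Then $s (f s)^{-1} : B \to A$ is a genuine section of $f$.

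For the second hypothesis, namely $\Lambda$ left hereditary and $B$ merely stable, I would simply invoke Lemma~\ref{hered-stable} to upgrade stability of $B$ to total stability, reducing to the case already treated.

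The only conceptual point, and thus the one place a careful reader might pause, is the passage from ``$f s$ equals $1_{B}$ modulo projectives'' to ``$f s$ is an automorphism modulo projectives'' — immediate from the fact that $\underline{1_{B}}$ is itself an isomorphism in the stable category, but worth flagging because it explains why the totally stable hypothesis has to sit on $B$ (rather than on $A$, as in Lemma~\ref{split-mono}). Beyond that, no obstacle arises; the proof is purely formal.
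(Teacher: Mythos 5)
Your proof is correct and follows exactly the route the paper intends: the paper omits the proof, stating only that it is analogous to that of Lemma~\ref{split-mono}, and your argument --- using total stability of $B$ to upgrade $fs \equiv 1_B$ modulo projectives to an honest automorphism, then taking $s(fs)^{-1}$ as a section, with Lemma~\ref{hered-stable} handling the hereditary case --- is precisely that dualization. Nothing to add.
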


We also have the following obvious but useful fact.

%
%

\begin{lemma}\label{lift}
Any homomorphism factoring through a projective lifts over any epimorphism co-terminal with it. \qed
\end{lemma}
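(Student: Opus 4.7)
The plan is to unpack both hypotheses and use the defining property of projective modules. Write the given homomorphism as $f : X \to Z$ and the epimorphism co-terminal with it as $g : Y \to Z$. By assumption, $f$ admits a factorization
\[
X \xrightarrow{\alpha} P \xrightarrow{\beta} Z
\]
for some projective $\Lambda$-module $P$ and homomorphisms $\alpha$, $\beta$ with $f = \beta \alpha$.

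Next I would apply projectivity of $P$ to the pair $(\beta, g)$. Since $g : Y \to Z$ is an epimorphism and $P$ is projective, the universal property of projectives yields a homomorphism $\tilde\beta : P \to Y$ such that $g \tilde\beta = \beta$. Setting $h := \tilde\beta \alpha : X \to Y$, one computes
\[
g h = g \tilde\beta \alpha = \beta \alpha = f,
\]
so $h$ is the desired lifting of $f$ over $g$.

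There is essentially no obstacle here: the statement is just the interaction between a factorization through a projective and the standard lifting property of projectives with respect to epimorphisms. The only point worth mentioning is that the epimorphism $g$ is an epimorphism in $\Lambda\text{-}\mathrm{Mod}$ (i.e., a surjection), which is exactly the hypothesis needed for the projective lifting property to apply.
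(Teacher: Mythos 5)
Your proof is correct and is exactly the standard argument the paper has in mind; the paper marks this lemma as obvious (giving no written proof), and the lifting of $\beta$ through the projective $P$ against the surjection $g$ is the intended reasoning. Nothing to add.
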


Now we want to characterize split epimorphisms in the stable category. 


\begin{proposition}\label{prop-epi}
 Let $\Lambda$ be a ring and $f : A \twoheadrightarrow B$ an epimorphism of 
 $\Lambda$-modules. The following are equivalent:

\begin{enumerate}
 \item $\underline{f}$ is a split epimorphism;
 \smallskip

\item $f$ is a split epimorphism;

\end{enumerate}
\end{proposition}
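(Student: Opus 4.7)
The implication (2) $\Rightarrow$ (1) is immediate from the functoriality of $\mathcal{Q}$: if $s : B \to A$ satisfies $fs = 1_B$, then $\underline{f}\,\underline{s} = \underline{1_B} = 1_{\underline{B}}$.

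For the forward direction (1) $\Rightarrow$ (2), my plan is to promote a stable section of $f$ to an honest module-theoretic section, using that $f$ itself is already epic. Suppose $\underline{f}$ is split by some $\underline{g} : B \to A$, where $g$ is a chosen representative. Then $\underline{fg} = 1_{\underline{B}}$, which means the homomorphism $fg - 1_B : B \to B$ factors through a projective.

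Now invoke Lemma~\ref{lift}: since $fg - 1_B$ factors through a projective and $f : A \twoheadrightarrow B$ is an epimorphism co-terminal with it, there exists $h : B \to A$ with $fh = fg - 1_B$. Setting $s := g - h : B \to A$, we compute
\[
fs = fg - fh = fg - (fg - 1_B) = 1_B,
\]
so $s$ is a bona fide section and $f$ is a split epimorphism.

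The argument is essentially routine and the only ``step'' worth naming is the invocation of Lemma~\ref{lift}, which packages the projectivity argument; there is no real obstacle, and in particular the proof uses neither the hereditary hypothesis nor any assumption on $B$ (such as total stability), in contrast with the earlier stated variant. This is precisely parallel to the monomorphism case handled in Lemma~\ref{split-mono}, except that here the hypothesis that $f$ is epic in $\Lambda$-$\Mod$ replaces the total stability hypothesis on the domain used there.
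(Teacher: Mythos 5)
Your proof is correct and is essentially identical to the paper's: both lift the homomorphism $fg - 1_B$, which factors through a projective, over the epimorphism $f$ via Lemma~\ref{lift}, and then correct $g$ by that lift to obtain an honest section. No further comment is needed.
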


\begin{proof}
We only need to show (1) $\Rightarrow$ (2). Assume that $\underline{f}~\underline{g}=\underline{1_{B}}$ for some homomorphism $g : B \to A$. Then $fg=1_{B}+h$, for some $h$ such that $\underline{h} = 0$. By Lemma~\ref{lift}, there is $g_{1}: B \rightarrow A$ such that $fg_{1}=h$. Then $f(g-g_{1})=1_{B}$. 
\end{proof}

\begin{proposition}
 Let $\Lambda$ be a ring and $f : A \rightarrow B$ an epimorphism of 
 $\Lambda$-modules. The following are equivalent:

\begin{enumerate}
\item $\underline{f}$ is an isomorphism;
\smallskip

\item $f$ is a split epimorphism and $\Ker f$ is  projective.

\end{enumerate}
\end{proposition}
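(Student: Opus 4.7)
The plan is to deduce this proposition directly from the two tools already set up, namely Lemma~\ref{split} (which relates projectivity of the outer terms of a split short exact sequence to stable isomorphy of the flanking maps) and Proposition~\ref{prop-epi} (which says split epimorphisms in the stable category lift to split epimorphisms in $\Lambda\text{-}\mathrm{Mod}$, provided $f$ is already epic). Since $f$ is assumed to be an epimorphism of modules, we have the short exact sequence
\[
0 \to \Ker f \xrightarrow{\ker f} A \xrightarrow{f} B \to 0,
\]
which will be the main vehicle.

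For the implication $(1) \Rightarrow (2)$, I would observe that an isomorphism is in particular a split epimorphism, so by Proposition~\ref{prop-epi} the map $f$ is already a split epimorphism in $\Lambda\text{-}\mathrm{Mod}$. Hence the above short exact sequence splits. I would then apply Lemma~\ref{split}(a) (taking the $g$ there to be our $f$, and the $A$ there to be $\Ker f$) to conclude that $\Ker f$ is projective.

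For the converse $(2) \Rightarrow (1)$, the splitting of $f$ together with the short exact sequence above produces a split exact sequence with projective left-hand term $\Ker f$, and Lemma~\ref{split}(a) applied in the other direction immediately gives that $\underline{f}$ is an isomorphism in the stable category.

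I do not anticipate a real obstacle here: the statement is essentially a packaging of Lemma~\ref{split}(a) (projectivity of the kernel) with the lifting result of Proposition~\ref{prop-epi} (genuine splitting of $f$ from stable splitting). The only point one must be careful about is that the hypothesis ``$f$ is an epimorphism'' is needed to invoke Proposition~\ref{prop-epi} and to have the short exact sequence at hand; without it, an isomorphism $\underline{f}$ need not arise from an epimorphism in $\Lambda\text{-}\mathrm{Mod}$, although by the lemma immediately before Lemma~\ref{criteria} one may always replace $\underline{f}$ by such a representative.
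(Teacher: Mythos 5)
Your proof is correct and follows essentially the same route as the paper: both directions hinge on Proposition~\ref{prop-epi} to upgrade the stable splitting to a genuine splitting of $f$, and on Lemma~\ref{split} applied to the resulting split exact sequence $0 \to \Ker f \to A \to B \to 0$. The only (harmless) variation is in $(1)\Rightarrow(2)$, where the paper deduces projectivity of $\Ker f$ by combining the split monomorphism $\ker f$ with the fact that $\underline{f}$ is a monomorphism, whereas you read it off directly from Lemma~\ref{split}(a); both are valid.
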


\begin{proof}
(1) $\Rightarrow$ (2). By Proposition~\ref{prop-epi}, $f$ is a split epimorphism. Thus $\mathrm{ker}\, f$ is a split monomorphism. On the other hand, $\underline{f}$ is a monomorphism, and so
$\mathrm{ker}\, f$ has to factor through a projective. Thus 
$\Ker f$ embeds in a projective module as a direct summand, and is therefore projective.
\smallskip

(2) $\Rightarrow$ (1). We have a split exact sequence 
$0 \to \Ker f \to A \to B \to 0$. The result now follows from 
Lemma~\ref{split}.
\end{proof}

Our next goal is to provide a necessary and sufficient condition for an epimorphism in the module category to represent an epimorphism in the stable category. First, we need new notation. 

Given an epimorphism $f : Y \to Z$ of $\Lambda$-modules, the  associated short exact sequence $0 \to X \to Y \overset{f}{\to} Z \to 0$ will be denoted by $[f]$\label{brackets}. Given any $h : Y \to Y'$, the pushout of $(f, h)$ results in a commutative diagram with exact rows:
\[
\xymatrix
	{
	0 \ar[r] 
	& X \ar[r] \ar[d] 
	& Y \ar[d]^{h} \ar[r]^{f} 
	& Z \ar[d] \ar[r] 	
	& 0
\\
	0 \ar[r] 
	& X' \ar[r]  
	& Y'  \ar[r]^{h \ulcorner^{f}} 
	& Z' \ar[r] 
	& 0
	}
\] 
where the epimorphism in the bottom row is denoted by
$h \ulcorner^{f}$. Let 
\[
\langle h \ulcorner^{f} \rangle : [f] \to [h \ulcorner^{f}]
\]
denote the chain map given by the vertical arrows. 

Secondly, we mention 

\begin{lemma}\label{0-homotopy}
Let $\Lambda$ be an arbitrary ring, and $f:A \twoheadrightarrow B$ an epimorphism of $\Lambda$-modules. Given a homomorphism 
$h:A\rightarrow Y$, the following are equivalent:

\begin{enumerate}
\item the chain map $\langle h \ulcorner^{f}\rangle$ is null-homotopic;

\item any chain map
\[
 \xymatrix
    {
    0 \ar[r] 
    & \Ker f \ar[r]^{i} \ar[d]
    & A \ar[d]^{h} \ar[r]^{f} 
    & B \ar[d]\ar[r] & 0
\\
    0 \ar[r] 
    & M \ar[r]  
    & Y  \ar[r] 
    & N \ar[r] 
    & 0
    }
\]
with $h$ in the middle and where the bottom row is exact, is null-homotopic;
\item there is a diagonal homomorphism in the pushout square of $(h, f)$ lifting the arrow parallel to $h$ over $h \ulcorner^{f}$:
\[
\xymatrix
	{
	A \ar[d]_{h} \ar[r]^{f} 
	& B \ar[d] \ar@{.>} [ld]
\\
	Y \ar[r]^{h \ulcorner^{f}}
	& D
	}
\] 

\end{enumerate}
\end{lemma}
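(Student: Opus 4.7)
The plan is to establish the cycle $(2) \Rightarrow (1) \Rightarrow (3) \Rightarrow (1) \Rightarrow (2)$, with $(2) \Rightarrow (1)$ being immediate (the pushout chain map is itself a chain map of the form in (2)), and the remaining implications following from direct diagram chases, with the pushout providing the ``universal'' extension.

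First, for $(1) \Rightarrow (2)$, I would exploit the universal property of the pushout. Given any chain map
\[
\xymatrix
    {
    0 \ar[r]
    & X \ar[r]^{\iota} \ar[d]
    & A \ar[d]^{h} \ar[r]^{f}
    & B \ar[d]^{\beta}\ar[r] & 0
\\
    0 \ar[r]
    & M \ar[r]^{j}
    & Y  \ar[r]^{g}
    & N \ar[r]
    & 0
    }
\]
with $h$ in the middle, the outer square $g\,h = \beta f$ induces, by the pushout property of $(h,f)$, a unique map $D \to N$ producing a chain map from $[h \ulcorner^{f}]$ to the bottom row of the diagram above, and factoring the original chain map through $\langle h \ulcorner^{f}\rangle$. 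Composing this factoring with a chain null-homotopy of $\langle h \ulcorner^{f}\rangle$ yields a chain null-homotopy of the original chain map.

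Next, for $(1) \Rightarrow (3)$: a null-homotopy of $\langle h \ulcorner^{f}\rangle$ consists, in particular, of a component $s_{0} : B \to Y$ satisfying $(h \ulcorner^{f}) s_{0} = \phi_{0}$, where $\phi_{0} : B \to D$ is the right-hand vertical in the pushout square. But $\phi_{0}$ is exactly the arrow parallel to $h$, so $s_{0}$ is the desired lifting.

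The heart of the argument is $(3) \Rightarrow (1)$. Suppose $s_{0} : B \to Y$ satisfies $(h \ulcorner^{f}) s_{0} = \phi_{0}$. I would consider the map $h - s_{0} f : A \to Y$; postcomposing with $h \ulcorner^{f}$ gives $(h \ulcorner^{f}) h - \phi_{0} f$, which is zero by commutativity of the pushout square. Hence $h - s_{0} f$ factors through the kernel $X' \hookrightarrow Y$ of $h \ulcorner^{f}$, giving a map $s_{1} : A \to X'$ with $s_{1}$ completing the null-homotopy on the middle degree. The remaining relation on kernels, $\phi_{2} = s_{1} \iota$, follows by postcomposing with the monomorphism $X' \hookrightarrow Y$ and using $f \iota = 0$ together with the definition of $\phi_{2}$ as the restriction of $h$ to kernels. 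This gives a full null-homotopy and establishes (1).

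The main obstacle is only bookkeeping: one must keep straight which component of the null-homotopy corresponds to which square, and verify compatibility of the extracted $s_{1}$ with the induced map on kernels; otherwise the argument is a routine diagram chase once the pushout's universal property is invoked.
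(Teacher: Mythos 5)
Your proposal is correct and follows essentially the same route as the paper: the substantive implication $(1)\Rightarrow(2)$ is proved in both cases by using the universal property of the pushout to factor an arbitrary chain map through $\langle h\ulcorner^{f}\rangle$, while the remaining implications are the same routine extraction of the degree-zero homotopy component and the construction of the degree-one component from $h-s_{0}f$ factoring through $\mathrm{Ker}(h\ulcorner^{f})$ (which the paper simply declares trivial, organizing the equivalences as a single cycle $(1)\Rightarrow(2)\Rightarrow(3)\Rightarrow(1)$ rather than your two biconditionals).
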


\begin{proof}
 (1) $\Rightarrow$ (2).
Taking a pushout of ($h, f$), we have a commutative diagram 
\[
\xymatrix
	{
	& \Ker f \ar[rr]^{i} \ar[dl] \ar ' [d] [dd] 
	&
	& A \ar[dl]_{h} 	\ar ' [d]^{h} [dd] \ar[rr]^{f} 
	& 
	& B \ar[ld] \ar[dd]
\\
	X' \ar@{.>}[dr]^{b'} \ar[rr] 
	&
	& 	Y \ar@{=}[dr] \ar[rr]^>>>>>{h \ulcorner^{f}}
	&
	& Z'\ar@{.>}[dr]^{b} 
	&
\\
	& M \ar[rr]^{i'}
	&
	& Y \ar[rr] 
	&
	& N
	}
\]
of solid arrows, where the rows are short exact sequences. By the universal property of pushouts, we have a dotted arrow $b$, and therefore a dotted arrow $b'$, making the squares incident with $1_{Y}$ and the triangle incident with~$b$ commute. Since $i'$ is a monomorphism, the triangle incident with $b'$ also commutes. Since the middle triangle is trivially commutative, we have that the chain map in question factors through $\langle h \ulcorner^{f}\rangle$, and is thus null-homotopic.
\smallskip

The implication (2) $\Rightarrow$ (3) and (3) $\Rightarrow$ (1) are trivial.
\end{proof}

Now we can establish the promised criterion.

\begin{theorem}\label{char-epi}
Let $\Lambda$ be any ring and $f:A \twoheadrightarrow B$ an epimorphism of  $\Lambda$-modules. The following conditions are equivalent:

\begin{enumerate}
 \item $\underline{f}$ is an epimorphism;
 \smallskip
 
 \item the equivalent conditions of Lemma~\ref{0-homotopy} are satisfied for any $h : A \to Q$ with a projective $Q$;
 \smallskip
 
 \item the equivalent conditions of Lemma~\ref{0-homotopy} are satisfied for any $h : A \to Y$ with  $\underline{h} = 0$.
 \suspend{enumerate}
 If $\Lambda$ is left hereditary, then these conditions are equivalent to 
 \resume{enumerate}
 \item the equivalent conditions of Lemma~\ref{0-homotopy} are satisfied for any epimorphism $h : A \twoheadrightarrow Q$ with a projective $Q$.
\end{enumerate}
\end{theorem}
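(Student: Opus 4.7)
The plan is to prove the cycle (3) $\Rightarrow$ (2) $\Rightarrow$ (1) $\Rightarrow$ (3), and then under the left hereditary hypothesis to add the equivalence (2) $\Leftrightarrow$ (4). The implication (3) $\Rightarrow$ (2) is immediate, since any $h : A \to Q$ with $Q$ projective has $\underline{h} = 0$, so the hypothesis of (3) applies.

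For (2) $\Rightarrow$ (1), I would take $g : B \to C$ with $\underline{gf} = 0$ and factor $gf = \beta\alpha$ through a projective $Q$, so $\alpha : A \to Q$ and $\beta : Q \to C$. Applying (2) to $h := \alpha$ yields $d : B \to Q$ with $\alpha\ulcorner^{f} \circ d = \alpha'$, where $\alpha' : B \to D$ is the pushout arrow parallel to $\alpha$. Since $\beta\alpha = gf$, the universal property of the pushout provides a unique $\gamma : D \to C$ with $\gamma\alpha' = g$ and $\gamma\alpha\ulcorner^{f} = \beta$. Then $g = \gamma\alpha' = \gamma(\alpha\ulcorner^{f})d = \beta d$ factors through $Q$, so $\underline{g} = 0$.

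For (1) $\Rightarrow$ (3), I would take $h : A \to Y$ with $\underline{h} = 0$ and form the pushout of $(h,f)$, yielding $h' : B \to D$ and $h\ulcorner^{f} : Y \to D$. Pushout commutativity gives $\underline{h'}\,\underline{f} = \underline{h\ulcorner^{f} \circ h} = 0$, and since $\underline{f}$ is epic, $\underline{h'} = 0$; hence $h' = \alpha\beta$ for some $\beta : B \to P$ with $P$ projective. Because $h\ulcorner^{f}$ is a module epimorphism (the bottom row of the pushout is exact), $\alpha : P \to D$ lifts through $h\ulcorner^{f}$ to some $\tilde\alpha : P \to Y$, and $d := \tilde\alpha\beta : B \to Y$ satisfies $h\ulcorner^{f} \circ d = h'$, giving the diagonal required by Lemma~\ref{0-homotopy}(3).

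For the left hereditary refinement, (2) $\Rightarrow$ (4) is trivial. For (4) $\Rightarrow$ (2), given $h : A \to Q$ with $Q$ projective, I would factor $h = \iota h_0$ through its image, so $h_0 : A \twoheadrightarrow h(A)$ is epic and $\iota : h(A) \hookrightarrow Q$ is monic; since $\Lambda$ is left hereditary, the submodule $h(A)$ of $Q$ is projective, so (4) applies to $h_0$ and yields $d_0 : B \to h(A)$ with $h_0\ulcorner^{f} \circ d_0 = h_0'$. Pasting the pushout of $(h_0,f)$ with the pushout of $\iota$ along $h_0\ulcorner^{f}$ realizes the pushout of $(h,f)$ and produces a canonical mono $j : D_0 \hookrightarrow D$ with $j \circ h_0' = h'$ and $j \circ h_0\ulcorner^{f} = h\ulcorner^{f} \circ \iota$, so $d := \iota d_0$ satisfies $h\ulcorner^{f} d = j h_0\ulcorner^{f} d_0 = j h_0' = h'$. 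The main obstacle is almost purely notational — keeping track of the several parallel pushout squares and their canonical comparison maps in this last step; the conceptual point throughout is that Lemma~\ref{0-homotopy}(3) asks nothing more than a one-sided lift of $h'$ through the module epimorphism $h\ulcorner^{f}$, which, once $\underline{h'} = 0$ is established, is supplied automatically by projectivity.
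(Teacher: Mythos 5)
Your proof is correct and follows essentially the same route as the paper: the pushout arrow parallel to $h$ is stably zero and hence lifts over the module epimorphism $h \ulcorner^{f}$ by projectivity, while the universal property of the pushout converts the diagonal into a factorization of a test map $g$ through the projective, the cycle merely being traversed in the opposite order. The only (harmless) variation is in (4) $\Rightarrow$ (2), where you factor $h$ through its image $h(A)$, projective by heredity, rather than through a projective direct summand of $A$ as in the paper's appeal to Lemma~\ref{basic}; both reduce to the same transitivity-of-pushouts pasting.
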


\begin{proof}
 (1) $\Rightarrow$ (2). Suppose $\underline{f}$ is an epimorphism. Then, in the pushout diagram
 \[
\xymatrix
	{
	A \ar@{->>}[r]^{f} \ar[d]_{h} 
	& B \ar@{..>}[dl]_{s}  \ar[d]^{h'}
\\
	Q \ar@{->>}[r]_{h \ulcorner^{f}} 
	& D
	}
\] 
of solid arrows, $h'$ factors through a projective. Since 
$h \ulcorner^{f}$ is an epimorphism, by Lemma~\ref{lift}, $h'$ lifts over it to a dotted arrow $s$.
\smallskip

(2) $\Rightarrow$ (3). This follows from the transitivity of pushouts.  
\smallskip

(3) $\Rightarrow$ (1). Under the assumptions, the equivalent conditions of Lemma~\ref{0-homotopy} are satisfied for any $h$ with a projective codomain. Let $g : B \to Z$ be a homomorphism such that $gf=h'' h$ with a projective codomain $Q$ of $h$. Taking a pushout of $(h,f)$, we have a commutative diagram
 \[
\xymatrix
	{
	A \ar@{->>}[r]^{f} \ar[d]_{h} 
	& B \ar[d]_{h^{\prime}} \ar[ddr]^{g} 
	&
\\
	Q \ar[drr]_{h^{\prime\prime}} \ar@{->>}[r]^{h \ulcorner^{f}} 
	& D \ar[dr] 
	&
\\
	&
	& Z
	}
\] 
By assumption, $h^{\prime}$ factors through $Q$, and hence the same is true for $g$. Thus $\underline{f}$ is an epimorphism.

The last assertion of the theorem follows, once again, from the transitivity of pushouts and Lemma~\ref{basic}.
\end{proof}

The next two results immediately follow (or can be easily verified directly) from this proposition and the obvious fact that any chain map into a contractible complex is null-homotopic.

\begin{corollary}
 Let $f : A \twoheadrightarrow B$ be an epimorphism. If, for any  $h:A \to Q$ with $Q$ projective, $h(\Ker f)$ is a direct summand of $Q$, then $\underline{f}$ is an epimorphism. \qed
\end{corollary}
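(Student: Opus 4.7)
The plan is to reduce the statement to the criterion furnished by Theorem~\ref{char-epi}, specifically to condition (2) there, which asks that the equivalent conditions of Lemma~\ref{0-homotopy} be satisfied for every $h : A \to Q$ with $Q$ projective. Thus I fix an arbitrary homomorphism $h : A \to Q$ into a projective, form the pushout of $(h,f)$, and try to exhibit the chain map $\langle h \ulcorner^{f}\rangle : [f] \to [h \ulcorner^{f}]$ as null-homotopic.

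First I would identify the kernel of $h \ulcorner^{f}$. Since $f$ is an epimorphism, taking the pushout along $f$ yields a commutative diagram with exact rows
\[
\xymatrix
	{
	0 \ar[r] & \Ker f \ar[r]^{\ker f} \ar[d]_{h|} & A \ar[r]^{f} \ar[d]^{h} & B \ar[r] \ar[d]^{h'} & 0 \\
	0 \ar[r] & h(\Ker f) \ar[r] & Q \ar[r]^{h \ulcorner^{f}} & D \ar[r] & 0
	}
\]
where the lower-left object is precisely $h(\Ker f)$; this is the standard description of the kernel of the pushout map when one of the input arrows is epic.

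Next I invoke the hypothesis: $h(\Ker f)$ is a direct summand of $Q$, so the bottom short exact sequence splits. Viewed as a three-term chain complex (concentrated in degrees $0,1,2$, say), the bottom row $[h \ulcorner^{f}]$ is therefore contractible; in particular $D$ is projective. Since any chain map landing in a contractible complex is null-homotopic (the fact the paper has already invoked in the remark preceding the statement), the chain map $\langle h \ulcorner^{f}\rangle$ is null-homotopic. This verifies condition (1) of Lemma~\ref{0-homotopy} for our arbitrary $h$, and thus condition (2) of Theorem~\ref{char-epi}, whence $\underline{f}$ is an epimorphism.

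There is no real obstacle in this argument; the only point that warrants a line of justification is the identification of $\ker(h \ulcorner^{f})$ with $h(\Ker f)$, which falls out of the standard description of a pushout square along an epimorphism. Everything else is a direct application of the machinery already built up in Lemma~\ref{0-homotopy} and Theorem~\ref{char-epi}.
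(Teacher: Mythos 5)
Your proposal is correct and is exactly the argument the paper intends: identify $\ker(h\ulcorner^{f})$ with $h(\Ker f)$ (valid since $f$ is epic), note that the hypothesis makes the bottom row of the pushout split and hence contractible, so $\langle h\ulcorner^{f}\rangle$ is null-homotopic, and conclude via condition (2) of Theorem~\ref{char-epi}. The paper leaves this as an immediate consequence of Theorem~\ref{char-epi} and the contractibility remark, so you have simply written out the same proof in full.
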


\begin{corollary}
Let $f:A \twoheadrightarrow B$ be an epimorphism. If, for any  $h:A \to Q$ with $Q$ projective, $Q/h(\Ker f)$ is projective, then $\underline{f}$ is an epimorphism. \qed 
\end{corollary}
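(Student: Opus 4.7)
My plan is to invoke Theorem~\ref{char-epi}, reducing the problem to verifying its condition~(2): for every homomorphism $h : A \to Q$ with $Q$ projective, the equivalent conditions of Lemma~\ref{0-homotopy} must hold. I fix such an $h$, and aim to show that the chain map $\langle h \ulcorner^{f} \rangle$ is null-homotopic, which is condition~(1) of Lemma~\ref{0-homotopy}.

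Because $f$ is an epimorphism, a routine computation identifies the pushout of $(h,f)$ as $Q/h(\Ker f)$, with $h \ulcorner^{f}: Q \to Q/h(\Ker f)$ being the canonical projection. Consequently, the lower row of the pushout diagram becomes the short exact sequence
\[
0 \to h(\Ker f) \to Q \to Q/h(\Ker f) \to 0.
\]
By the hypothesis of the corollary, $Q/h(\Ker f)$ is projective, so this sequence splits and the target chain complex of $\langle h \ulcorner^{f} \rangle$ is contractible. Since any chain map into a contractible complex is null-homotopic, the desired null-homotopy exists; Theorem~\ref{char-epi} then yields that $\underline{f}$ is an epimorphism.

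The argument is essentially mechanical once the pushout has been identified. The one place that warrants a moment's care is recognizing that the surjectivity of $f$ forces the pushout to take the simple form $Q/h(\Ker f)$; after that, the hypothesis is designed precisely to deliver a split bottom row, and no genuine obstacle remains.
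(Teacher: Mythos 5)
Your proof is correct and follows exactly the route the paper intends: the paper dispatches this corollary by remarking that it "immediately follows" from Theorem~\ref{char-epi} and the fact that any chain map into a contractible complex is null-homotopic, and your write-up simply fills in the (correct) identification of the pushout as $Q/h(\Ker f)$ and the splitting of the bottom row. No discrepancy to report.
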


Throughout the rest of this section, we assume that  $\Lambda$ is left hereditary. In that case, we can give another criterion for a homomorphism to be an epimorphism in the stable category.

\begin{theorem}\label{hered-epi}
Let $\Lambda$ be a left hereditary ring and $f:A\rightarrow B$ a homomorphism of $\Lambda$-modules. 
The following are equivalent:
\begin{enumerate}
 \item $\underline{f}$ is an epimorphism;
 \smallskip
 \item for any representation $A = K \oplus P$ with a projective~$P$, there exists a representation $B = M \oplus Q$ with a projective $Q$ such that $f(K) \supset M$.
\end{enumerate}
\end{theorem}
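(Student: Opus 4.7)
The two directions naturally split along the role of Lemma~\ref{basic}. Starting with the easier implication (2)~$\Rightarrow$~(1), suppose $g : B \to Z$ is a homomorphism with $\underline{g}\,\underline{f} = 0$. Then $gf$ factors through a projective, so Lemma~\ref{basic} yields a decomposition $A = K \oplus P$ with $P$ projective such that $gf$ factors through the canonical projection onto~$P$; in particular $gf|_K = 0$, and hence $g(f(K)) = 0$. Applying hypothesis~(2) to this very decomposition, we find $B = M \oplus Q$ with $Q$ projective and $M \subset f(K)$. Then $g$ vanishes on $M$ and therefore factors through $B/M \cong Q$, which is projective. Hence $\underline{g} = 0$, showing that $\underline{f}$ is an epimorphism.

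For the nontrivial implication (1)~$\Rightarrow$~(2), the plan is to exploit a pushout. Given $A = K \oplus P$ with $P$ projective, let $p : A \twoheadrightarrow P$ denote the canonical projection; this is an epimorphism with kernel $K$, and $\underline{p} = 0$ since $p$ factors through the projective~$P$. Form the pushout of $p$ and $f$:
\[
\xymatrix
	{
	A \ar[r]^p \ar[d]_f & P \ar[d]^{f''}\\
	B \ar[r]_{p''} & D
	}
\]
Since $p$ is an epimorphism with kernel $K$, a standard pushout computation shows that $p''$ is an epimorphism with kernel $f(K)$, so $D \cong B/f(K)$. Passing to the stable category, the commutativity $p''f = f''p$ together with $\underline{p} = 0$ gives $\underline{p''}\,\underline{f} = \underline{f''}\,\underline{p} = 0$. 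Since $\underline{f}$ is assumed to be an epimorphism in the stable category, we conclude $\underline{p''} = 0$.

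Now apply Lemma~\ref{basic} once more, this time to $p''$: there exists a decomposition $B = M \oplus Q$ with $Q$ projective such that $p''$ factors through the canonical projection $B \to Q$. In particular $p''|_M = 0$, so $M \subset \ker p'' = f(K)$, which is exactly the decomposition required by~(2). The only creative step is identifying the pushout of $f$ along the projection onto the projective summand as the right construction; beyond that, the argument is a mechanical application of Lemma~\ref{basic} together with standard facts about pushouts, and I do not anticipate any serious obstacle.
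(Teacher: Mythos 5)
Your proof is correct and follows essentially the same route as the paper: the (2)~$\Rightarrow$~(1) direction is identical, and in (1)~$\Rightarrow$~(2) your pushout $D$ is precisely $B/f(K)$ with $p''$ the canonical quotient, which is the cokernel of $f|_{K}$ that the paper shows to be stably zero before invoking Lemma~\ref{basic}. Your way of extracting $M \subset f(K)$ from $p''|_{M}=0$ is a slightly cleaner finish than the paper's concluding diagram chase, but the argument is the same.
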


\begin{proof} (1) $\Rightarrow$ (2). Let $\underline{f}$ be an epimorphism and $A = K \oplus P$ with a projective~$P$. Since the inclusion $K \to K \oplus P$ is an isomorphism in the stable category, the restriction of $f$ to $K$ yields an epimorphism in the stable category. By the defining property of epimorphisms, the cokernel $B \overset{\pi}{\to} B/f(K)$ of that inclusion 
factors through some projective $Q$. By Lemma~\ref{basic}, we have $B = M \oplus Q$ and a commutative diagram
\[
\xymatrix
	{ 
	M \oplus Q \ar[d] \ar[r]^>>>>>{\pi} & (M \oplus Q)/f(K)\\
	Q \ar[ur] &
	}
\] 
where the vertical map is the canonical projection. As a result, we have a commutative diagram of solid arrows with exact rows 
\[
\xymatrix
	{
	0 \ar[r] & M \ar@{..>}[d] \ar[r]    & M \oplus Q \ar@{=}[d] \ar[r] 
	& Q \ar[r] \ar[d]               & 0\\
	0 \ar[r] & f(K) \ar[r] & M \oplus Q \ar[r]^>>>>>{\pi} 
	& (M \oplus Q)/f(K) \ar[r] & 0
	}
\]
Completing it with a dotted arrow, we have the required inclusion $M \subset f(K)$.
\smallskip

(2) $\Rightarrow$ (1). Assume that  $\underline{g} \underline{f} = 0$, for some  $g:B\rightarrow C$. Then $gf$ factors through a projective and, by Lemma~\ref{basic}, we have $A = K \oplus P$ for some $K$ and a projective~$P$. Let 
$M \oplus Q$ be a decomposition of $B$ such that  $f(K) \supset M$. We then have a commutative diagram
 \[
\xymatrix
	{
	K \oplus P \ar[d] \ar[r]^{f} & M \oplus Q \ar[d]^{g} \\
	P \ar[r] & C
	}
\] 
where the left vertical map is the canonical projection. Since $g$ vanishes on $f(K)$, it also vanishes on $M$ and thus factors through $M \oplus Q \to (M \oplus Q)/M \simeq Q$.
\end{proof}

\begin{corollary}\label{epi-cases}
Let $\Lambda$ be a left hereditary ring and $f:A\rightarrow B$ a homomorphism of $\Lambda$-modules.

\begin{enumerate}
\item[(a)]\label{a} If $A$ is stable and $f$ is an epimorphism, then $\underline{f}$ is
an epimorphism;
\smallskip

\item[(b)] if $\underline{f}$ is an epimorphism, and $B$ is stable, then
$f$ is an epimorphism;
\smallskip

\item[(c)] if both $A$ and $B$ are stable, then $\underline{f}$ is an
epimorphism if and only if $f$ is an epimorphism.
\end{enumerate}

\end{corollary}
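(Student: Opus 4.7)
The plan is to deduce all three parts directly from the characterization of epimorphisms in the stable category given by Theorem~\ref{hered-epi}, combined with the defining property of stable modules (no nonzero projective direct summands).

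For part (a), I would start with the hypothesis that $A$ is stable and $f$ is an epimorphism in $\Lambda$-$\mathrm{Mod}$. To verify condition (2) of Theorem~\ref{hered-epi}, I take an arbitrary decomposition $A = K \oplus P$ with $P$ projective; because $A$ is stable, this forces $P = 0$ and $K = A$. Since $f$ is surjective, $f(K) = f(A) = B$, so the trivial decomposition $B = B \oplus 0$ (with $M = B$, $Q = 0$) satisfies $f(K) \supset M$. Therefore $\underline{f}$ is an epimorphism.

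For part (b), I assume $\underline{f}$ is an epimorphism and $B$ is stable. Applying Theorem~\ref{hered-epi} to the trivial decomposition $A = A \oplus 0$ produces a decomposition $B = M \oplus Q$ with $Q$ projective and $M \subset f(A)$. Since $B$ is stable, it has no nonzero projective summand, hence $Q = 0$ and $M = B$. Thus $B = M \subset f(A)$, so $f$ is an epimorphism in $\Lambda$-$\mathrm{Mod}$.

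Part (c) is then immediate by combining (a) and (b): both directions follow once $A$ and $B$ are simultaneously stable. There is no real obstacle here; the content has been absorbed into Theorem~\ref{hered-epi}, and all that remains is to read off the consequences from the stability assumptions. The only subtlety worth flagging is to make sure the zero module is admitted as a (trivially) projective summand so that the trivial decompositions $A = A \oplus 0$ and $B = B \oplus 0$ are legitimate within the framework of Theorem~\ref{hered-epi}.
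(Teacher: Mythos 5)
Your proof is correct and follows essentially the same route as the paper: both parts (a) and (b) are read off from Theorem~\ref{hered-epi} using the stability hypothesis to force the projective summands $P$ (resp.\ $Q$) to vanish, exactly as in the paper's argument. Nothing is missing.
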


\begin{proof}
(a) is obvious. For (b), take the representation $A = A \oplus \{0\}$. Since $\underline{f}$ is an epimorphism, we have 
$B = M \oplus Q$ for some $M\subset f(A)$ and projective $Q$. As $B$ is stable, $B = M = f(A)$. (c) is immediate.
\end{proof}

\begin{proposition}\label{mono-epi}
 Let $\Lambda$ be a left hereditary ring and $f:A\rightarrow B$ a  monomorphism of $\Lambda$-modules. The following  are equivalent:
\begin{enumerate}
 \item $\underline{f}$  is an epimorphism;
 \smallskip
 
 \item there is a representation $B = M \oplus Q$ with $Q$    projective and $M\subset f(A)$;
 \smallskip

\item $\underline{f}$ is a split epimorphism;
\smallskip

\item $\underline{f}$  is an isomorphism.
\end{enumerate}
\end{proposition}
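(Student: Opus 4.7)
The plan is to prove the cycle $(1) \Rightarrow (2) \Rightarrow (4) \Rightarrow (3) \Rightarrow (1)$. The implication $(1) \Rightarrow (2)$ is an immediate instance of Theorem~\ref{hered-epi} applied to the trivial decomposition $A = A \oplus \{0\}$: the conclusion of that theorem directly yields a decomposition $B = M \oplus Q$ with $Q$ projective and $M \subset f(A)$. The implications $(4) \Rightarrow (3) \Rightarrow (1)$ require no work: an isomorphism is in particular a split epimorphism, and a split epimorphism is an epimorphism. So the substantive step is $(2) \Rightarrow (4)$.

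For $(2) \Rightarrow (4)$, the idea is to use the fact that $f$ is a monomorphism to lift the decomposition of $B$ to a matching decomposition of $A$. Set $A' := f^{-1}(M)$; since $f$ is injective, it restricts to an isomorphism $A' \xrightarrow{\sim} M$ and induces an isomorphism $A/A' \cong f(A)/M$. Because $M \subset f(A) \subset B$, the quotient $f(A)/M$ sits inside $B/M \cong Q$ as a submodule of a projective. By left hereditarity, $f(A)/M$, and therefore $A/A'$, is projective. Hence the short exact sequence $0 \to A' \to A \to A/A' \to 0$ splits, producing a decomposition $A = A' \oplus A''$ with $A''$ projective.

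Having obtained parallel decompositions $A = A' \oplus A''$ and $B = M \oplus Q$ with $A''$ and $Q$ projective, I would then observe that in the stable category $\underline{A''} = 0 = \underline{Q}$ by Lemma~\ref{criteria}, so the canonical inclusions $A' \hookrightarrow A$ and $M \hookrightarrow B$ become isomorphisms. A direct chase shows that $\underline{f}$ is equivalent to $\underline{f \circ \iota \circ \pi}$, where $\iota: A' \hookrightarrow A$ and $\pi: A \twoheadrightarrow A'$ are the canonical inclusion and projection (the difference factors through $A''$). Since $f \circ \iota$ equals the composition $A' \xrightarrow{\sim} M \hookrightarrow B$ of an honest isomorphism followed by a stable isomorphism, $\underline{f}$ is a composition of stable isomorphisms, hence itself an isomorphism. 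Alternatively, Heller's Theorem~\ref{heller} applies directly once one produces the isomorphism $A \oplus Q \cong A' \oplus A'' \oplus Q \cong M \oplus Q \oplus A'' = B \oplus A''$ that intertwines $f$ with the canonical inclusion/projection.

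The only real obstacle is recognizing that left hereditarity is what permits the projective complement $A''$ of $A' = f^{-1}(M)$ to be split off inside $A$; once the parallel decomposition is in place, the remaining verification is a purely formal manipulation in the stable category.
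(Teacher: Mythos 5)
Your proof is correct, but the substantive implication is handled by a genuinely different route than the paper's. You share the paper's treatment of $(1)\Rightarrow(2)$ (via Theorem~\ref{hered-epi} applied to $A=A\oplus\{0\}$) and of the trivial implications, but where the paper proves $(2)\Rightarrow(3)$ and then $(3)\Rightarrow(4)$, you prove $(2)\Rightarrow(4)$ directly. The paper's $(2)\Rightarrow(3)$ is a short observation: since the inclusion $M\to M\oplus Q=B$ is a stable isomorphism by Lemma~\ref{split}, the inclusion $f(A)\to B$ acquires a right inverse in the stable category (a section through $M$), and precomposing with the isomorphism $A\cong f(A)$ makes $\underline{f}$ a split epimorphism; the step $(3)\Rightarrow(4)$ then costs nothing because Theorem~\ref{characterization} already guarantees $\underline{f}$ is a monomorphism. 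You instead pull the decomposition of $B$ back to $A$: setting $A'=f^{-1}(M)$, you use left hereditarity to see that $A/A'\cong f(A)/M\subseteq B/M\cong Q$ is projective, split off a projective complement $A''$, and exhibit $\underline{f}$ as a composite of stable isomorphisms. Your argument is somewhat longer and uses left hereditarity in a different place (to split $A$, rather than to know that $\mathcal{Q}$ preserves monomorphisms), but it buys an explicit Heller-type isomorphism $A\oplus Q\cong B\oplus A''$ and in effect re-derives the content of Theorem~\ref{iso} in this special case; the paper's route is more economical because it leverages machinery already in place. Both arguments are sound.
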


\begin{proof}
The implication (1) $\Rightarrow$ (2) immediately follows from 
Theorem~\ref{hered-epi}.
\smallskip

(2) $\Rightarrow$ (3). By Lemma~\ref{split}, the inclusion 
$M \to M \oplus Q$ is an isomorphism in the stable category. Therefore, in the commutative square 

\[
\xymatrix
	{
	M \ar[d] \ar@{=}[r] & M \ar[d] \\
	f(A) \ar[r] & M \oplus Q
	}
\] 
of canonical inclusions, the bottom map is a split epimorphism. Composing it with the isomorphism $A \to f(A)$ (since $f$ is a monomorphism), we have that $\underline{f}$ is a split epimorphism.
\smallskip

(3) $\Rightarrow$ (4). By Theorem~\ref{characterization}, 
$\underline{f}$ is a monomorphism. If it is also a split epimorphism, it obviously is an isomorphism.
\smallskip

(4) $\Rightarrow$ (1) is trivial.
\end{proof}

We can now give a criterion for a morphism in the stable category of a left hereditary ring to be a split epimorphism.
 
\begin{theorem}\label{3.12}
Let $\Lambda$ be a left hereditary ring and $f : A \rightarrow B$ a homomorphism of $\Lambda$-modules. The following are equivalent:

\begin{enumerate}
 \item $\underline{f}$ is a split epimorphism;
 \smallskip 
 \item $\Ker f$ is a direct summand of $A$ and there is a projective module $Q$ such that $B \simeq M \oplus Q$ and $M \subset f(A)$.
\end{enumerate}
\end{theorem}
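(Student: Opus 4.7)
The plan is to prove each implication separately, using the epi-mono factorization of $f$ together with Proposition~\ref{mono-epi}, Proposition~\ref{prop-epi}, and Theorem~\ref{characterization}.

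For the implication $(1) \Rightarrow (2)$, factor $f$ as $A \overset{\pi}{\to} f(A) \overset{i}{\to} B$, where $\pi$ is the corestriction of $f$ and $i$ is the inclusion. Given a splitting $\underline{f}\,\underline{g} = \underline{1_B}$ in the stable category, one has $\underline{i}\cdot(\underline{\pi}\,\underline{g}) = \underline{1_B}$, so $\underline{i}$ is a split epimorphism. Since $i$ is an injective module map (with zero kernel), Theorem~\ref{characterization} shows that $\underline{i}$ is also a monomorphism in the stable category, hence in fact an isomorphism. Applying Proposition~\ref{mono-epi} to the monomorphism $i$ produces a decomposition $B = M \oplus Q$ with $Q$ projective and $M \subset f(A)$, yielding the second half of~(2). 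To obtain the first half, cancel the isomorphism $\underline{i}$ on the left of the identity $\underline{i}\,\underline{\pi}\,\underline{g}\,\underline{i} = \underline{i}$ to deduce $\underline{\pi}\,\underline{g}\,\underline{i} = \underline{1}_{f(A)}$, so that $\underline{\pi}$ is a split epimorphism as well. Since $\pi$ is itself a module epimorphism, Proposition~\ref{prop-epi} lifts this to a splitting of $\pi$ in $\Lambda$-$\mathrm{Mod}$, exhibiting $\Ker f = \Ker \pi$ as a direct summand of $A$.

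For the implication $(2) \Rightarrow (1)$, write $A = \Ker f \oplus A'$ and let $p : A \to A'$ be the canonical projection. The restriction $f' := f|_{A'} : A' \to f(A)$ is then an isomorphism, and the factorization $f = i \circ f' \circ p$ holds in $\Lambda$-$\mathrm{Mod}$. Passing to the stable category, $\underline{p}$ is a (genuine) split epimorphism, $\underline{f'}$ is an isomorphism, and $\underline{i}$ is an isomorphism by Proposition~\ref{mono-epi} applied to the given decomposition $B = M \oplus Q$ with $M \subset f(A)$. Thus $\underline{f}$ is a composition of a split epimorphism with two isomorphisms, hence a split epimorphism.

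I do not anticipate a serious obstacle: the argument is essentially a diagram chase through previously established results. The one step that requires care is the cancellation $\underline{i}\,\underline{\pi}\,\underline{g}\,\underline{i} = \underline{i} \Rightarrow \underline{\pi}\,\underline{g}\,\underline{i} = \underline{1}_{f(A)}$, which is legitimate precisely because $\underline{i}$ has already been identified as an isomorphism rather than merely a monomorphism; without that upgrade via Proposition~\ref{mono-epi}, the argument would stall.
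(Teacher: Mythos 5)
Your argument is correct and follows essentially the same route as the paper: both factor $f$ through $f(A)$, upgrade the induced monomorphism $\underline{i}$ to an isomorphism via Theorem~\ref{characterization}, invoke Proposition~\ref{mono-epi} for the decomposition of $B$, and use Proposition~\ref{prop-epi} to split the surjection onto $f(A)$ in $\Lambda$-$\mathrm{Mod}$. (Minor remark: the left cancellation of $\underline{i}$ only requires it to be a monomorphism, so that step does not actually depend on the upgrade to an isomorphism.)
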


\begin{proof}
 (1) $\Rightarrow$ (2). Since $\underline{f}$ is a split epimorphism, the epi-mono factorization 
 $A \overset{e}{\to} f(A) \overset{m}{\to} B$ of $f$ shows that  the same is true for 
 $\underline{m}$. By  Theorem~\ref{characterization}, the latter is also a monomorphism, and therefore an isomorphism. Using the fact that $\underline{f}$ is a split epimorphism again, we have that the same is true for $\underline{e}$. Applying Proposition~\ref{prop-epi}, we have that $e$ is a split epimorphism, and therefore 
$\Ker f$ is a direct summand of $A$. Applying 
Proposition~\ref{mono-epi} to the monomorphism $m$, we have the desired decomposition of $B$.

(2) $\Rightarrow$ (1). Since $\Ker f$ is a direct summand of $A$, the homomorphism $e$ in the epi-mono factorization $f = me$ is a split epimorphism, and therefore so is $\underline{e}$. Applying 
Proposition~\ref{mono-epi} to the monomorphism $m$, we have that $\underline{m}$ is also a split epimorphism. Thus, 
$\underline{f}$ is a split epimorphism, too.
\end{proof}

As we mentioned in the introduction, for any ring, one has the well-known criterion of Heller for a morphism in the stable category to be an isomorphism. Using Theorems~\ref{characterization} and~\ref{3.12}, we have another criterion in the case of a left hereditary ring.

\begin{theorem}\label{iso}
Let $\Lambda$ be a left hereditary ring and $f : A \rightarrow B$ a homomorphism of $\Lambda$-modules. The following are equivalent:

\begin{enumerate}
 \item $\underline{f}$ is an isomorphism;
 \smallskip
 
 \item $\Ker f$ is a projective summand of $A$ and there is a projective module $Q$ such that $B = M \oplus Q$ and $M \subset f(A)$. \qed
\end{enumerate} 
\end{theorem}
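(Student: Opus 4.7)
The plan is to derive both implications by combining the two earlier characterizations: Theorem~\ref{characterization}, which (in its equivalent form (4)) tells us that, over a left hereditary ring, $\underline{f}$ is a monomorphism iff $\Ker f$ is projective; and Theorem~\ref{3.12}, which characterizes split epimorphisms in terms of a summand decomposition of $A$ and a projective summand of $B$ sitting below $f(A)$. The bridge between these and isomorphisms is the elementary categorical fact that a morphism which is simultaneously a monomorphism and a split epimorphism is automatically an isomorphism.

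For (1) $\Rightarrow$ (2): assume $\underline{f}$ is an isomorphism. Then $\underline{f}$ is in particular a monomorphism, so by Theorem~\ref{characterization}(4) the module $\Ker f$ is projective. Moreover, $\underline{f}$ is (a split) epimorphism, so Theorem~\ref{3.12} supplies a decomposition $A = \Ker f \oplus A'$ and a decomposition $B = M \oplus Q$ with $Q$ projective and $M \subset f(A)$. Combining these two outputs gives the required conclusion that $\Ker f$ is a \emph{projective} direct summand of $A$, together with the asserted decomposition of $B$.

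For (2) $\Rightarrow$ (1): assume the conditions in (2). Since $\Ker f$ is projective, Theorem~\ref{characterization}(4) again yields that $\underline{f}$ is a monomorphism. Since, on the other hand, $\Ker f$ is a direct summand of $A$ and $B$ admits the prescribed decomposition, Theorem~\ref{3.12} yields that $\underline{f}$ is a split epimorphism. A morphism in any category that is both a monomorphism and a split epimorphism is an isomorphism (if $\underline{s}$ is a right inverse of $\underline{f}$, then $\underline{f}\cdot\underline{s}\underline{f} = \underline{f} = \underline{f}\cdot\underline{1}$, and cancelling the monomorphism $\underline{f}$ gives $\underline{s}\underline{f} = \underline{1}$). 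Hence $\underline{f}$ is an isomorphism.

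There is essentially no obstacle here: the whole content of the theorem has already been packaged into Theorems~\ref{characterization} and~\ref{3.12}, and the proof is just a matter of quoting them and invoking the trivial mono-plus-split-epi-equals-iso observation.
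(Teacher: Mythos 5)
Your proof is correct and follows exactly the route the paper intends: the theorem is stated with an immediate \qed precisely because it is the conjunction of Theorem~\ref{characterization} (mono $\Leftrightarrow$ $\Ker f$ projective) and Theorem~\ref{3.12} (split epi $\Leftrightarrow$ the summand conditions), glued by the standard mono-plus-split-epi-implies-iso observation. Nothing to add.
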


\section{Epimorphisms in the stable category and torsion}\label{epis-and-torsion}
We continue to assume that $\Lambda$ is an arbitrary ring and $A$ an $\Lambda$-module. Recall that the torsion submodule $t(A)$ is defined as the kernel of the canonical map $e_{A} : A \to A^{\ast\ast}$. Equivalently, this is the intersection of the kernels of all linear forms on $A$.\footnote{A more appropriate name for this concept is 1-torsion, see, for example,~\cite[p. 2597]{Mar}. For a finite module over a commutative domain this definition is equivalent to the classical definition of torsion. Without the finiteness assumption, the two are not equivalent: the $\Z$-module $\Q$, being divisible, coincides with its 1-torsion submodule, but has no torsion in the classical sense. Notice that the classical torsion for modules over commutative domains is always contained in 1-torsion.} Let~$A^{\sharp}$ denote the image of $A$ under $e_{A}$. The naturality of $e_{A}$ shows that $t(-)$ is a subfunctor of the identity functor on $\Lambda$-$\Mod$, with  quotient $(-)^{\sharp}$. Notice that $t(A) = A$ if and only if $A^{\ast}=0$. 
 
The next series of results shows that, to a large extent, the study of epimorphisms in the stable category can be handled ``modulo'' torsion. The following lemma is a direct consequence of the definitions.

\begin{lemma}\label{torsion}
 Any homomorphism that factors through a projective vanishes on the torsion submodule. \qed
\end{lemma}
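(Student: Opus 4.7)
The plan is to reduce the statement to the fact that a projective module has trivial torsion, and then exploit the naturality of $t(-)$ (or, more concretely, the defining description of $t(A)$ as the intersection of kernels of linear forms).

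First, suppose $f : A \to B$ admits a factorization $A \xrightarrow{g} P \xrightarrow{q} B$ with $P$ projective. Since $f(t(A)) = q(g(t(A)))$, it suffices to prove that $g(t(A)) = 0$. For any linear form $\varphi : P \to \Lambda$, the composition $\varphi \circ g : A \to \Lambda$ is a linear form on $A$, and therefore vanishes on $t(A)$ by the definition of $t(A)$ as $\bigcap_{\psi \in A^{\ast}} \ker \psi$. Consequently $g(t(A))$ lies in the intersection of the kernels of all linear forms on $P$, which is precisely $t(P)$.

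It remains to show $t(P) = 0$ for any projective $P$. Choose a free module $F = \Lambda^{(I)}$ and a split monomorphism $j : P \hookrightarrow F$. The coordinate projections $\pi_i : F \to \Lambda$ pull back under $j$ to linear forms $\varphi_i := \pi_i \circ j \in P^{\ast}$, and if $p \in \bigcap_i \ker \varphi_i$, then $j(p) \in \bigcap_i \ker \pi_i = 0$, forcing $p = 0$. Hence $t(P) = 0$, which completes the proof.

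There is no real obstacle here: the argument is essentially bookkeeping with the defining universal property of $t(-)$ and the fact that projectives are submodules of products (equivalently, summands of free modules) and therefore have enough linear forms to separate points. This is why the authors signal that the statement is a direct consequence of the definitions.
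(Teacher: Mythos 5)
Your argument is correct and is exactly the elaboration of what the paper leaves implicit as ``a direct consequence of the definitions'': by functoriality of $t(-)$ (or, as you spell out, by pulling back linear forms) the image of $t(A)$ under $g:A\to P$ lands in $t(P)$, and $t(P)=0$ because a projective embeds in a free module whose coordinate projections separate points. Nothing to add.
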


\begin{proposition}\label{kernel-in-torsion}
 Let $f : A \twoheadrightarrow B$ be an epimorphism of $\Lambda$-modules such that $\Ker f$ is contained in the torsion submodule of $A$. Then $\underline{f}$ is an epimorphism.
\end{proposition}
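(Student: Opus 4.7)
The plan is to prove this directly from the definitions together with Lemma~\ref{torsion}, without appealing to the pushout machinery of Theorem~\ref{char-epi}. The key observation is that any homomorphism from $A$ to a projective module automatically kills $\Ker f$ under the given hypothesis, which will let us factor any ``obstruction'' through $f$.

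First I would unpack what it means for $\underline{f}$ to be an epimorphism. Suppose $g : B \to C$ is a homomorphism such that $\underline{g}\,\underline{f} = 0$; my goal is to show $\underline{g} = 0$. By definition, $gf$ factors through a projective, so there exist a projective $Q$, a homomorphism $h : A \to Q$, and a homomorphism $h' : Q \to C$ with $gf = h'h$.

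Next, I would exploit the hypothesis $\Ker f \subseteq t(A)$. By Lemma~\ref{torsion}, the map $h$, which factors through a projective in a trivial way (it \emph{is} a map into a projective, and any such map clearly factors through a projective), vanishes on $t(A)$; in particular $h|_{\Ker f} = 0$. Since $f$ is a module epimorphism with kernel $\Ker f$, the universal property of the cokernel yields a unique $\bar{h} : B \to Q$ with $h = \bar{h} f$. Substituting back, we get $gf = h' h = h' \bar{h} f$, and cancelling the (ordinary) epimorphism $f$ gives $g = h'\bar{h}$. Thus $g$ factors through the projective $Q$, so $\underline{g} = 0$, as desired.

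I do not anticipate any real obstacle here: the only slightly delicate point is remembering that a map into a projective trivially satisfies the hypothesis of Lemma~\ref{torsion} (the identity factorization suffices), so the vanishing on torsion, and hence on $\Ker f$, is automatic. Everything else is the universal property of the cokernel and cancellation of a surjection.
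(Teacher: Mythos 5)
Your proof is correct and is essentially identical to the paper's own argument: both take a factorization $gf=h'h$ through a projective, use Lemma~\ref{torsion} to see that $h$ vanishes on $\Ker f\subseteq t(A)$, factor $h$ through the surjection $f$, and cancel $f$ to conclude that $g$ factors through the projective. No difference in substance.
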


\begin{proof}
 Suppose $g : B \to L $ is such that $gf$ factors through a projective. We then have a commutative diagram of solid arrows
 \[
\xymatrix
	{
	0 \ar[r] & \Ker f \ar[r] & A \ar[d]_{h} \ar[r]^{f} & B 	\ar@{..>}		[dl]_{h_{1}} \ar[d]^{g} \ar[r] 	& 0\\
	& & P \ar[r]_{h_{2}} & L &
	}
\] 
where the top row is exact and $P$ is projective. By assumption,  $h$ restricts to zero on $\Ker f$ and, therefore, factors through $f$ by way of a dotted arrow $h_{1}$. Since $f$ is an epimorphism, $g = h_{2} h_{1}$, showing that $\underline{g} = 0$.
\end{proof}

\begin{corollary}\label{modtorsion}
 Let $p : A \to A/t(A) \simeq A^{\sharp}$ be the canonical map. Then $\underline {p}$ is an epimorphism in the stable category.
 \qed
\end{corollary}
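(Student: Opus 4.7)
The plan is to deduce this immediately from Proposition~\ref{kernel-in-torsion}. First I would recall from the definitions at the start of the section that $t(A) = \Ker e_{A}$ and $A^{\sharp} = \Imr e_{A}$, so that the first isomorphism theorem applied to $e_{A}$ gives the stated identification $A/t(A) \simeq A^{\sharp}$, and the canonical projection $p : A \twoheadrightarrow A/t(A)$ is an epimorphism of $\Lambda$-modules with $\Ker p = t(A)$.

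Next I would observe that the hypothesis of Proposition~\ref{kernel-in-torsion} is trivially satisfied: $\Ker p = t(A) \subseteq t(A)$. Applying that proposition directly yields that $\underline{p}$ is an epimorphism in the stable category, completing the proof.

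There is no real obstacle here; the statement is essentially a restatement of Proposition~\ref{kernel-in-torsion} for the universal case where the kernel \emph{equals} the torsion submodule. The only bookkeeping is confirming the isomorphism $A/t(A) \simeq A^{\sharp}$ from the definitions, which is immediate.
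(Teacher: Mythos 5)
Your proposal is correct and is exactly the argument the paper intends: the corollary is stated with a \qed precisely because $\Ker p = t(A) \subseteq t(A)$ makes it an immediate instance of Proposition~\ref{kernel-in-torsion}. Nothing is missing.
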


\begin{corollary}
If $f : A \twoheadrightarrow B$ is an epimorphism and $A^{\ast} = 0$, then $\underline{f}$ is an epimorphism in the stable category. 
\end{corollary}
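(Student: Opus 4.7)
The plan is to derive this as an immediate consequence of Proposition~\ref{kernel-in-torsion}, whose hypothesis asks for the kernel of the epimorphism to be contained in the torsion submodule of the source. So the only thing one really has to verify is that under the assumption $A^{\ast}=0$, the torsion submodule $t(A)$ is the whole of $A$, from which $\Ker f \subset A = t(A)$ is automatic.

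To see that $A^{\ast}=0$ forces $t(A)=A$, one can argue in either of two equivalent ways. First, by the description of $t(A)$ as the intersection of the kernels of all linear forms on $A$: if there are no nonzero linear forms, that intersection is all of $A$. Alternatively, $A^{\ast}=0$ implies $A^{\ast\ast}=0$, so the canonical map $e_{A}:A\to A^{\ast\ast}$ is the zero map, whose kernel is $A$; this is also recorded in the remark preceding Lemma~\ref{torsion} to the effect that $t(A)=A$ if and only if $A^{\ast}=0$. Either way, applying Proposition~\ref{kernel-in-torsion} to the epimorphism $f$ then yields that $\underline{f}$ is an epimorphism in the stable category, and there is no real obstacle: the corollary is essentially a restatement of the proposition in the special case $t(A)=A$.
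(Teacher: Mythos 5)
Your proposal is correct and is exactly the paper's argument: the paper's proof consists of the single observation that $A^{\ast}=0$ gives $t(A)=A$, so Proposition~\ref{kernel-in-torsion} applies. Your extra justification that $t(A)=A$ (via the description of $t(A)$ as the intersection of kernels of linear forms, or via $e_{A}=0$) is a correct elaboration of the same step.
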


\begin{proof}
 In this case, $t(A) = A$.
\end{proof}

\begin{corollary}\label{i-epi}
 In the above notation, let $i : t(A) \to A$ be the inclusion map. The following are equivalent:
 
\begin{enumerate}
 \item $\underline{i}$ is an isomorphism in the stable category;
 \smallskip
 \item $\underline{i}$ is an epimorphism in the stable category;
 \smallskip
 \item $A^{\sharp}$ is projective.
\end{enumerate}
\end{corollary}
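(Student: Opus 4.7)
The plan is to exploit the tautological short exact sequence $0 \to t(A) \overset{i}{\to} A \overset{p}{\to} A^{\sharp} \to 0$ together with Corollary~\ref{modtorsion}, which already guarantees that $\underline{p}$ is an epimorphism in the stable category. The implication (1) $\Rightarrow$ (2) is then trivial, so the work lies in (2) $\Rightarrow$ (3) and (3) $\Rightarrow$ (1).

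For (2) $\Rightarrow$ (3), I would first note that $pi = 0$ (either by exactness of the above sequence or, equivalently, by Lemma~\ref{torsion} applied to $p$, which vanishes on $t(A)$). Consequently $\underline{p}\,\underline{i} = 0$, and under the hypothesis that $\underline{i}$ is an epimorphism we deduce $\underline{p} = 0$. Now $\underline{p}$ is simultaneously zero and an epimorphism (the latter by Corollary~\ref{modtorsion}), and this forces $\underline{A^{\sharp}}$ to be a zero object: for any homomorphism $g : A^{\sharp} \to Z$, the relation $\underline{g}\,\underline{p} = 0$ combined with $\underline{p}$ being epic gives $\underline{g} = 0$; specializing to $g = 1_{A^{\sharp}}$ shows $\underline{1_{A^{\sharp}}} = 0$, and Lemma~\ref{criteria} then yields that $A^{\sharp}$ is projective.

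For (3) $\Rightarrow$ (1), assuming $A^{\sharp}$ is projective, the defining short exact sequence splits, giving $A \cong t(A) \oplus A^{\sharp}$ with $i$ being (up to isomorphism) the canonical inclusion of the first summand. Lemma~\ref{split}(b) applied to this split exact sequence then tells us that $\underline{i}$ is an isomorphism in the stable category.

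The main conceptual point, and the step worth emphasizing, is the bootstrapping in (2) $\Rightarrow$ (3): the hypothesis that $\underline{i}$ is epic directly yields only that $\underline{p} = 0$, which a priori is much weaker than $A^{\sharp}$ being projective. It is precisely the combination with the independent fact from Corollary~\ref{modtorsion} that $\underline{p}$ is already an epimorphism that collapses $\underline{A^{\sharp}}$ to a zero object and thereby forces projectivity.
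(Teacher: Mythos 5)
Your proof is correct and is essentially identical to the paper's: the paper likewise deduces $\underline{p}=0$ from $\underline{p}\,\underline{i}=0$ and $\underline{i}$ being epic, combines this with Corollary~\ref{modtorsion} to conclude that $A^{\sharp}$ is a zero object, and handles (3) $\Rightarrow$ (1) by splitting the sequence and invoking Lemma~\ref{split}. No gaps; your added explanation of why a zero epimorphism forces a zero codomain is just an unpacking of what the paper leaves implicit.
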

\begin{proof}
 (1) $\Rightarrow$ (2). Immediate.
 \smallskip
 
 (2) $\Rightarrow$ (3). Since $\underline{i}$ is an epimorphism and $p$ is a cokernel of $i$, $\underline{p} = 0$. By Corollary~\ref{modtorsion}, $\underline{p}$ is an epimorphism, and therefore $A^{\sharp}$ is zero in the stable category.
 \smallskip

(3) $\Rightarrow$ (1). This follows at once from Lemma~\ref{split}.
\end{proof}

\begin{proposition}\label{middle-to-right}
 Let $f : A \to B$ be a homomorphism of $\Lambda$-modules. If  $\underline{f}$ is an epimorphism, then so is $\underline{f^{\sharp}}$, and  $f(A) \supset t(B)$. 
\end{proposition}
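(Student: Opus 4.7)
The plan is to exploit the functoriality of the evaluation map $e_X : X \to X^{\ast\ast}$, which makes $(-)^{\sharp}$ a functor and produces, for each $f : A \to B$, a commutative square
\[
\xymatrix
    {
    A \ar[r]^{f} \ar[d]_{p_A} & B \ar[d]^{p_B} \\
    A^{\sharp} \ar[r]^{f^{\sharp}} & B^{\sharp}
    }
\]
in $\Lambda$-$\Mod$, where $p_A$ and $p_B$ denote the canonical surjections onto the respective quotients by torsion. By Corollary~\ref{modtorsion}, both $\underline{p_A}$ and $\underline{p_B}$ represent epimorphisms in the stable category.

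For the first assertion, I would compose: since $\underline{f}$ is epic by hypothesis and $\underline{p_B}$ is epic by the above, the composition $\underline{p_B}\,\underline{f}$ is an epimorphism. Commutativity of the square gives $\underline{p_B}\,\underline{f} = \underline{f^{\sharp}}\,\underline{p_A}$, so $\underline{f^{\sharp}}\,\underline{p_A}$ is an epimorphism. The elementary categorical fact that if $gh$ is epic then $g$ is epic now forces $\underline{f^{\sharp}}$ to be an epimorphism.

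For the second assertion, I would consider the canonical projection $\pi : B \twoheadrightarrow B/f(A)$. Then $\pi f = 0$, hence $\underline{\pi}\,\underline{f} = 0$, and since $\underline{f}$ is an epimorphism, we conclude $\underline{\pi} = 0$, i.e., $\pi$ factors through a projective. Lemma~\ref{torsion} then guarantees that $\pi$ vanishes on $t(B)$, so $t(B) \subset \ker \pi = f(A)$. Both parts follow directly from results already at hand, and I do not anticipate any serious obstacle.
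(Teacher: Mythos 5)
Your proof is correct and follows essentially the same route as the paper: the first assertion via the naturality square for $(-)^{\sharp}$, Corollary~\ref{modtorsion}, and cancellation of epimorphisms, and the second via the observation that $\mathrm{coker}\,f$ factors through a projective and hence, by Lemma~\ref{torsion}, vanishes on $t(B)$. No gaps.
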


\begin{proof}
We have a commutative diagram 
\[
\xymatrix
	{
	0 \ar[r] & t(A) \ar[d]^{t(f)} \ar[r] & A \ar[d]^{f} \ar[r] & 		A^{\sharp} \ar[d]^{f^{\sharp}} \ar[r] & 0\\
	0 \ar[r] & t(B) \ar[r]^{i} & B \ar[r]^{p} & B^{\sharp} \ar[r] & 0\\
	}
\] 
with exact rows. By Corollary~\ref{modtorsion}, $\underline{p}$ is an  epimorphism. Since $\underline{f}$ is an epimorphism, so is $\underline{f^{\sharp}}$. To prove the second assertion, notice that $\mathrm{coker} f$ factors through a projective and, by Lemma~\ref{torsion}, must vanish on $t(B)$.
\end{proof}

\begin{remark}\label{endo-stable}
 Since the endofunctor $(-)^{\sharp}$ preserves projectives, by the universal property of the quotient functor $\mathscr{Q}$, we have an endofunctor on the stable category. 
 The just proved proposition shows that that endofunctor preserves epimorphisms.
\end{remark}

\begin{corollary}
 In the above notation, if $A^{\sharp}$ is projective and $B^{\sharp}$ is not, then $\underline{f}$ is not an epimorphism in the stable category.\footnote{In particular, this result applies when $t(A) = A$ and $B^{\sharp}$ is not projective.}
\end{corollary}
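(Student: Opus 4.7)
The plan is to argue by contraposition, using \textbf{Proposition~\ref{middle-to-right}} together with \textbf{Lemma~\ref{criteria}} to reduce everything to a statement about the zero object in the stable category.

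Concretely, I would assume that $\underline{f}$ is an epimorphism and derive that $B^{\sharp}$ must be projective. First, by Proposition~\ref{middle-to-right}, the induced morphism $\underline{f^{\sharp}} : A^{\sharp} \to B^{\sharp}$ is again an epimorphism in the stable category. Next, since $A^{\sharp}$ is projective by hypothesis, Lemma~\ref{criteria} says that $A^{\sharp}$ is a zero object in $\Lambda\text{-}\Modst$. Thus $\underline{f^{\sharp}}$ is an epimorphism whose domain is a zero object.

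The key step is then the general fact that, in an additive (or even just pointed) category, any epimorphism out of a zero object has zero codomain. This follows because, for any $g : B^{\sharp} \to Z$ in the stable category, the composite $g \circ \underline{f^{\sharp}}$ equals $0 \circ \underline{f^{\sharp}} = 0$; epicness of $\underline{f^{\sharp}}$ forces $g = 0$, so the identity of $B^{\sharp}$ is zero in the stable category, i.e.\ $B^{\sharp}$ is a zero object. By Lemma~\ref{criteria} again, $B^{\sharp}$ is projective, contradicting the hypothesis. Equivalently, one can phrase the same observation through Yoneda: $h^{\underline{A^{\sharp}}} = 0$, so the injection $h^{\underline{B^{\sharp}}} \hookrightarrow h^{\underline{A^{\sharp}}}$ induced by the epimorphism $\underline{f^{\sharp}}$ is zero, giving $h^{\underline{B^{\sharp}}} = 0$ and hence the projectivity of $B^{\sharp}$.

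There is no real obstacle here: once Proposition~\ref{middle-to-right} is in hand, the argument is a one-line reduction plus the trivial ``epi out of zero is zero'' observation. The footnoted special case ($t(A)=A$, so $A^{\sharp}=0$ is trivially projective) then follows immediately.
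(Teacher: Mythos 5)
Your proposal is correct and is essentially the paper's own argument: the paper also deduces from Proposition~\ref{middle-to-right} that $\underline{f^{\sharp}}$ would be an epimorphism and then observes this is impossible when $A^{\sharp}$ is projective (hence a zero object) while $B^{\sharp}$ is not; you have merely spelled out the ``epi out of a zero object forces zero codomain'' step that the paper leaves implicit.
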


\begin{proof}
Under the above assumptions, $\underline{f^{\sharp}}$ cannot be an epimorphism.\footnote{Alternatively, this result can also be deduced from Corollary~\ref{i-epi}. Since $A^{\sharp}$ is projective, the inclusion $t(A) \to A$ is an isomorphism in the stable category. Assuming that 
$\underline{f}$ is an epimorphism, we have that the inclusion 
$t(B) \to B$ is an epimorphism in the stable category (see the diagram in Proposition~\ref{middle-to-right}). Then, by Corollary~\ref{i-epi}, $B^{\sharp}$ is projective, a contradiction.}
\end{proof}

The next result gives a partial converse to Proposition~\ref{middle-to-right}.

\begin{proposition}
 Let $f : A \to B$ be a homomorphism of $\Lambda$-modules. If  $\underline{f^{\sharp}}$ is an epimorphism in the stable category and $t(f)$ is an epimorphism, then $\underline{f}$ is an epimorphism in the stable category.
\end{proposition}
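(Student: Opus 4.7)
The plan is to chase the commutative diagram from Proposition~\ref{middle-to-right} and exploit Lemma~\ref{torsion} twice: once on the source side to factor the given witness through the quotient by torsion, and once on the target side to push our test map through the same quotient. The starting point is a homomorphism $g : B \to L$ with $\underline{g}\,\underline{f} = 0$; the goal is to show $\underline{g} = 0$.

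First, I will show that $g$ vanishes on $t(B)$. Since $gf$ factors through a projective, Lemma~\ref{torsion} implies that $gf$ vanishes on $t(A)$; written in the diagram of Proposition~\ref{middle-to-right}, this says $g \circ i \circ t(f) = 0$, where $i : t(B) \hookrightarrow B$ is the inclusion. The hypothesis that $t(f)$ is an epimorphism then forces $g \circ i = 0$. Because $p : B \twoheadrightarrow B^{\sharp}$ is a cokernel of $i$, there is a (unique) homomorphism $g' : B^{\sharp} \to L$ with $g = g' p$.

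Next, I will show that $g' \circ f^{\sharp}$ factors through a projective. Write $gf = \alpha\beta$ with $\beta : A \to P$, $\alpha : P \to L$, and $P$ projective. The map $\beta$ itself factors through a projective (trivially), so by Lemma~\ref{torsion} it vanishes on $t(A)$, hence descends along the quotient $q_A : A \twoheadrightarrow A^{\sharp}$ to some $\beta' : A^{\sharp} \to P$ with $\beta = \beta' q_A$. Combining with the identity $pf = f^{\sharp} q_A$ coming from the diagram, I get
\[
g' f^{\sharp} q_A \;=\; g'\, p\, f \;=\; g f \;=\; \alpha \beta' q_A,
\]
and since $q_A$ is a (module-theoretic) epimorphism, $g' f^{\sharp} = \alpha \beta'$ factors through $P$.

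Finally, the hypothesis that $\underline{f^{\sharp}}$ is an epimorphism in the stable category applied to $g'$ yields $\underline{g'} = 0$, i.e., $g'$ factors through a projective, and therefore so does $g = g' p$. Thus $\underline{g} = 0$, proving that $\underline{f}$ is an epimorphism. I do not expect a genuine obstacle here; the only slightly delicate point is the initial reduction of Step~1, which must use the fact that $t(f)$ is an honest surjection (not just a stable epimorphism) in order to cancel it from the right of $g \circ i \circ t(f) = 0$.
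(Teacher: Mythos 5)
Your proposal is correct and follows essentially the same route as the paper's proof: both arguments descend $g$ to $g' : B^{\sharp} \to L$ using the surjectivity of $t(f)$ together with Lemma~\ref{torsion}, descend the projective witness $\beta$ to $A^{\sharp}$, cancel the surjection $A \twoheadrightarrow A^{\sharp}$ to see that $g' f^{\sharp}$ factors through a projective, and then invoke the hypothesis on $\underline{f^{\sharp}}$. No gaps.
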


\begin{proof}
Let $g : B \to L$ be such that $gf$ factors through a projective; we want to show that so does $g$.
In the commutative diagram 
\[
\xymatrix
	{
	0 \ar[r] 
	& t(A) \ar[dd]^{t(f)} \ar[r] 
	& A \ar[dd]_{f} \ar[rd]^{h} \ar[rr]^{q} 
	& 
	& A^{\sharp} \ar@{..>}[ld] \ar[dd]^{f^{\sharp}} \ar[r] 	
	& 0\\
	&
	&
	& P \ar[ldd] 
	&
	&\\
	0 \ar[r] 
	& t(B) \ar[r] 
	& B \ar[rr]^{p} \ar[d]_{g} 
	& 
	& B^{\sharp} \ar[r] \ar@{..>}[lld]_{r}  
	& 0\\
	&  
	& L 
	& 
	&
	}
\] 
of solid arrows, where $P$ is projective, $h$ vanishes on $t(A)$ and hence extends to~$A^{\sharp}$ by way of a dotted arrow $A^{\sharp} \to P$. As $gf$ vanishes on $t(A)$ 
and $t(f)$ is onto, $g$ vanishes on $t(B)$, and therefore $g$ extends to $B^{\sharp}$ by way of a dotted arrow~$r$. Composing  the epimorphism $q$ with the two compositions containing the dotted arrows, we have that the square $A^{\sharp}PLB^{\sharp}$ commutes, and therefore $r f^{\sharp}$ factors through $P$. As~$\underline{f^{\sharp}}$ is an epimorphism, $r$ factors through a projective, and so does $g$.
\end{proof}

\begin{remark}
In the terminology of Remark~\ref{endo-stable}, the just proved result says that the endofunctor on the stable category determined by $(-)^{\sharp}$ reflects epimorphisms whose representatives are epimorphisms on the corresponding torsion submodules.
\end{remark}

Now we want to look at epimorphisms in the stable category with a fixed domain and show that, for certain choices of the domain, all epimorphisms are split.

\begin{proposition}\label{sub-proj}
Let $\Lambda$ be an arbitrary ring and $A$ a submodule of a projective $\Lambda$-module. Then any epimorphism in the stable category with domain $A$ splits.
\end{proposition}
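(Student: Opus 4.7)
The plan is to apply Theorem~\ref{char-epi} with $h$ taken to be an embedding $i : A \hookrightarrow P$ of $A$ into a projective, and then show that the resulting diagonal lifting is forced to factor through $i$. This will in fact produce a splitting of $f$ in $\Lambda$-$\Mod$ itself, which is stronger than the stated conclusion.

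First I reduce to the case where $f$ itself is an epimorphism in $\Lambda$-$\Mod$. By the lemma stated just before Theorem~\ref{heller}, $\underline{f}$ is isomorphic in the arrow category of the stable category to one represented by an epimorphism $\tilde f : A \oplus P_0 \twoheadrightarrow B$ with $P_0$ projective. The new domain $A \oplus P_0$ is still a submodule of a projective, since $A \hookrightarrow P$ implies $A \oplus P_0 \hookrightarrow P \oplus P_0$. As being a split epimorphism in the stable category depends only on the isomorphism class of the morphism, I may assume that $f$ itself is an epimorphism in $\Lambda$-$\Mod$.

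Next, fix an embedding $i : A \hookrightarrow P$ with $P$ projective and form the pushout of $(i, f)$:
\[
\xymatrix
	{
	A \ar[r]^{f} \ar[d]_{i}
	& B \ar[d]^{i'} \ar@{.>}[dl]_{s}
\\
	P \ar[r]_{i \ulcorner^{f}}
	& D
	}
\]
By Theorem~\ref{char-epi}, (2), together with condition (3) of Lemma~\ref{0-homotopy} applied to $h = i$, there is a homomorphism $s : B \to P$ with $(i \ulcorner^{f}) \circ s = i'$. Two elementary observations now complete the proof. First, the pushout $i'$ of the monomorphism $i$ along $f$ is itself a monomorphism (pushouts of monos are monos in an abelian category). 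Second, $\ker(i \ulcorner^{f}) = i(\ker f) \subset i(A)$, so $D \cong P/i(\ker f)$ with $i'(B)$ identified as the submodule $i(A)/i(\ker f)$. Consequently, for each $b \in B$, $s(b) + i(\ker f) = i'(b)$ lies in $i(A)/i(\ker f)$, forcing $s(b) \in i(A)$.

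Hence $s$ factors uniquely as $s = i \circ g$ for some $g : B \to A$, and the commutativity of the pushout square combined with the injectivity of $i'$ gives
\[
i' \circ (fg) = (i \ulcorner^{f}) \circ i \circ g = (i \ulcorner^{f}) \circ s = i' = i' \circ 1_B,
\]
whence $fg = 1_B$. Thus $f$ is a split epimorphism in $\Lambda$-$\Mod$, and a fortiori $\underline{f}$ splits in the stable category. The central insight is the containment $\ker(i \ulcorner^{f}) \subset i(A)$: it is this fact, automatic once $A$ sits inside a projective, that confines the abstract lifting $s$ to $A$ and so upgrades a stable-category splitting into a module-theoretic one; the rest is routine diagram-chasing.
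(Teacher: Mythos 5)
Your proof is correct and follows essentially the same route as the paper's: reduce to $f$ epic, embed $A$ in a projective, form the pushout, invoke Theorem~\ref{char-epi} to obtain the diagonal lifting $s$, and then pull $s$ back to a genuine splitting of $f$. The only cosmetic difference is that you verify by an explicit element computation ($\ker(i\ulcorner^{f}) = i(\Ker f)$, injectivity of $i'$, hence $s(B)\subseteq i(A)$) what the paper obtains by observing that the pushout square is also a pullback.
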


\begin{proof}
Suppose $f : A \to B$ is such that $\underline{f}$ is an epimorphism. We want to show that $\underline{f}$ splits. Without loss of generality, we may assume that $f$ is epic, so that we have a short exact sequence $0 \to \Ker f \to A \overset{f}{\to} B \to 0$. By assumption, we have a short exact sequence $0 \to A \overset{h}{\to} Q \overset{q}{\to} C \to 0$ with $Q$ projective. Taking the pushout of $(h,f)$ we have a commutative diagram 
\[
\xymatrix
	{
	&
	& 0 \ar[d]
	& 0 \ar[d]
	&
\\
	0 \ar[r]
	& \Ker f \ar[r] \ar@{=}[d]
	& A \ar[r]_{f} \ar[d]_{h}
	& B \ar[r] \ar[d]^{h'} \ar@{.>}[dl]_{s} \ar @/_/@{.>}[l]_{s'}
	& 0
\\
	0 \ar[r]
	& \Ker h \ulcorner^{f} \ar[r]
	& Q \ar[r]^{h \ulcorner^{f}} \ar[d]_{q}
	& D \ar[r] \ar[d]
	& 0
\\
	&
	& C \ar@{=}[r] \ar[d]
	& C \ar[d]
	&
\\
	&
	& 0
	& 0
	&
	}
\] 
of solid arrows with exact rows and columns. By Theorem \ref{char-epi}, there is a dotted arrow $s$ such that $h' = h \ulcorner^{f} s $. It is clear that the square $ABDQ$ is a pullback. Thus, there exists a homomorphism $s' : B \to A$ such that $hs' = s$ and
$f s' = 1_{B}$. In particular, $f$ splits, and so does $\underline{f}$.
\end{proof}

Notice that each $\Lambda$-module is a submodule of a projective module if and only 
if~$\Lambda$ is quasi-Frobenius. In particular, we have

\begin{corollary}
 If $\Lambda$ is quasi-Frobenius, then any epimorphism in the stable category of $\Lambda$ splits.\footnote{This also follows at once from the well-known fact that the stable category of a quasi-Frobenius ring is triangulated.} \qed
\end{corollary}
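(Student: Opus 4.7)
The plan is to combine Proposition~\ref{sub-proj} with the characterization of quasi-Frobenius rings recalled in the sentence preceding the corollary. Since $\Lambda$ is quasi-Frobenius, every $\Lambda$-module embeds as a submodule of a projective $\Lambda$-module. In particular, given any morphism in the stable category, its domain $A$ satisfies the hypothesis of Proposition~\ref{sub-proj}.

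Concretely, I would let $\underline{f} : A \to B$ be an arbitrary epimorphism in the stable category. Without loss of generality (by the lemma on p.~4 about replacing a morphism in its isomorphism class by one represented by a module epimorphism), we may assume $f : A \twoheadrightarrow B$ is an epimorphism of $\Lambda$-modules. Since $\Lambda$ is quasi-Frobenius, $A$ sits inside some projective $\Lambda$-module. Proposition~\ref{sub-proj} then applies verbatim, producing a splitting of $\underline{f}$.

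There is essentially no obstacle: the content lies entirely in Proposition~\ref{sub-proj}, and the quasi-Frobenius hypothesis is used only to guarantee that its domain hypothesis is always satisfied. The one small point worth noting is that the lemma used to replace $\underline{f}$ by an epic representative does not change its isomorphism class in the stable category, so splitting the new representative splits the original one; alternatively, one can observe that an epimorphism in the stable category with an arbitrary (not necessarily surjective) representative $f$ is already covered by Proposition~\ref{sub-proj} once one passes to the associated epimorphism in the module category by summing with a projective cover of the codomain.
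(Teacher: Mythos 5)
Your proposal is correct and is exactly the paper's argument: the corollary is stated as an immediate consequence of Proposition~\ref{sub-proj} together with the observation, made just before it, that every module embeds in a projective precisely when $\Lambda$ is quasi-Frobenius. The extra care you take about passing to an epic representative is harmless but unnecessary, since Proposition~\ref{sub-proj} already applies to an arbitrary epimorphism in the stable category with domain $A$.
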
 

Since submodules of projectives are torsionfree, it is natural to ask whether the conclusion of Proposition~\ref{sub-proj} holds for arbitrary torsionfree modules. That this is not the case for the larger class of classically torsionfree modules over commutative domains can be seen from the following simple

\begin{example}
Consider the short exact sequence $0 \to \Z \to \Q \overset{f}{\to} \Q/\Z \to 0$ in the category of abelian groups. We claim that $\underline{f}$ is an epimorphism. Suppose we have a commutative diagram 
 \[
\xymatrix
	{
	\Q \ar[d]^{h} \ar[r]^{f}
	& \Q/\Z \ar[d]^{g}
\\
	Q \ar[r]
	& D
	}
\] 
where $Q$ is projective. Since $\Q$ is divisible, $h = 0$. Since $f$ is an epimorphism, $g=0$, i.e., $\underline{f}$ is indeed an epimorphism. On the other hand, by Proposition~\ref{prop-epi}, $\underline{f}$ is not split.
\end{example}

Our final results in this section deal with the full subcategory of torsionfree modules. It it well-known that it is reflective in 
$\Lambda$-$\Mod$, i.e., the inclusion functor has a left adjoint, called a \texttt{reflector}, which is given by $(-)^{\sharp} = (-)/t(-)$. We want to show that the full subcategory determined by torsionfree modules is also reflective in $\Lambda$-$\underline{\Mod}$.

Let $\mathbf{X}$ be a reflective subcategory of 
$\Lambda$-$\Mod$ with inclusion $i :\mathbf{X} \rightarrow \Lambda$-$\Mod$ and reflector $r : \Lambda \text{-}\Mod \rightarrow\mathbf{X}$. Then $\mathbf{X}$ is additive~\cite[Proposition 3.5.4]{B}. Since adjoint functors between additive categories are additive, we have that both $i$ and $r$ are additive. Assume that $ir$ sends projectives to projectives. Let $\mathbf{X}^{e}$ denote the full subcategory of the stable category determined by ${\mathbf{X}}$; it is clearly additive. By the universal property of the quotient functor, $r$ extends to an additive functor $r^{e} : \Lambda \text{-}\underline{\Mod} \to \mathbf{X}^{e}$. Finally, let~$i^{e}$ be the inclusion $\mathbf{X}^{e} \to \Lambda\text{-}\underline{\Mod}$. We now have a commutative diagram 
\[
\xymatrix
	{
	 \mathbf{X} \ar[r]^{i} \ar[d]^{\mathscr{Q |_{\mathbf{X}}}}
	 & \Lambda \text{-} \Mod \ar[r]^{r} \ar[d]^{\mathscr{Q}}
	 & \mathbf{X} \ar[d]^{\mathscr{Q |_{\mathbf{X}}}}
\\
	\mathbf{X}^{e} \ar[r]^{i^{e}} 
	& \Lambda \text{-}\underline{\Mod} \ar[r]^{r^{e}}
	& \mathbf{X}^{e}
	}
\]

\begin{lemma} 
$(r^{e}, i^{e})$ is an adjoint pair, and hence $\mathbf{X}^{e}$ is reflective in $\Lambda \text{-} \underline{\Mod}$.
\end{lemma}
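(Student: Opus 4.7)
The plan is to inherit the adjunction $r^e \dashv i^e$ from the module-level adjunction $r \dashv i$ by showing that the hom-set bijection descends to the stable categories. Denote by $\eta : \mathrm{id} \to ir$ and $\epsilon : ri \to \mathrm{id}$ the unit and counit; for $A \in \Lambda\text{-}\mathrm{Mod}$ and $X \in \mathbf{X}$, the adjunction isomorphism
\[
\varphi_{A,X} : \Hom_{\mathbf{X}}(r(A), X) \xrightarrow{\sim} \Hom_{\Lambda\text{-}\mathrm{Mod}}(A, i(X)), \qquad \varphi_{A,X}(f) = i(f) \circ \eta_A,
\]
has inverse $\psi_{A,X}(g) = \epsilon_X \circ r(g)$. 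I would verify that $\varphi$ sends morphisms factoring through a $\Lambda$-projective to morphisms factoring through a $\Lambda$-projective, and the same for $\psi$, so that $\varphi$ descends to a natural bijection on $\underline{\Hom}$-sets.

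The forward direction is immediate: if $f = b \circ h$ with $h : r(A) \to P$, $b : P \to X$ and $P$ a $\Lambda$-projective, then $\varphi(f) = b \circ h \circ \eta_A$ visibly factors through $P$. The reverse direction is where the standing assumption that $ir$ preserves projectives plays an essential role. Indeed, if $\varphi(f) = b \circ a$ with $a : A \to P$, $b : P \to i(X)$, and $P$ a $\Lambda$-projective, then
\[
f = \psi(\varphi(f)) = \epsilon_X \circ r(b) \circ r(a)
\]
factors through $r(P)$; by hypothesis $ir(P) = r(P)$ is again a $\Lambda$-projective, so $\underline{f} = 0$ in $\mathbf{X}^e$. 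I expect this to be the only non-routine step; everything else is bookkeeping.

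Consequently $\varphi$ induces a bijection
\[
\underline{\varphi}_{A,X} : \underline{\Hom}_{\mathbf{X}^e}(r^e(A), X) \longrightarrow \underline{\Hom}_{\Lambda\text{-}\underline{\mathrm{Mod}}}(A, i^e(X)),
\]
and its naturality in $A$ and $X$ is inherited from the naturality of $\varphi$ (which in turn rests on naturality of $\eta$) together with functoriality of $\mathscr{Q}$, $\mathscr{Q}|_{\mathbf{X}}$, $r^e$, and $i^e$. This furnishes the required adjunction $r^e \dashv i^e$, so $\mathbf{X}^e$ is reflective in $\Lambda\text{-}\underline{\mathrm{Mod}}$.
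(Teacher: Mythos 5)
Your proof is correct, but it takes a different route from the paper's. The paper works entirely with the unit $\eta$ and counit $\varepsilon$ of $r \dashv i$: it writes down the two triangle identities $(i\varepsilon)(\eta i) = 1_{i}$ and $(\varepsilon r)(r\eta) = 1_{r}$ and simply applies $\mathscr{Q}$ (respectively, its restriction to $\mathbf{X}$) to obtain the corresponding identities for $i^{e}$ and $r^{e}$; the hypothesis that $ir$ preserves projectives is consumed once and for all in the construction of $r^{e}$ via the universal property of the quotient functor, and the verification of the adjunction itself is then purely formal. You instead descend the hom-set bijection $\varphi_{A,X}(f) = i(f)\circ\eta_{A}$ to the stable hom-groups, which requires the extra (and correctly identified) check that $\psi$ carries maps factoring through a projective to maps factoring through a projective --- this is exactly where you re-use the assumption that $ir(P)$ is projective, since $f = \varepsilon_{X}\circ r(b)\circ r(a)$ factors through $r(P)$. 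Both arguments are sound; the paper's is shorter because the triangle identities transport along any functor with no case analysis, while yours makes visible precisely why the preservation of projectives is needed for the stable hom-sets to match up (you should also note, for well-definedness of $\underline{\varphi}$ on equivalence classes, that $\varphi$ is additive, which is immediate from its formula and the additivity of $i$).
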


\begin{proof}
Let $\eta$ be the unit of the adjunction $r \dashv i$, and $\varepsilon$ its counit (which is an isomorphism). Then the compositions 
%
%
%
%
\[
\xymatrix
	{
	i \ar[r]^{\eta i} 
	& i r i \ar[r]^{i \varepsilon}
	& i
	}
\]
\[
\xymatrix
	{
	r \ar[r]^{r \eta} 
	& r i r \ar[r]^{\varepsilon r} 
	& r
	}
\]
are the identities of $i$ and, respectively, $r$.
%
%
Applying $\mathscr{Q}$ to the first equality, and the restriction of $\mathscr{Q}$ to $\mathbf{X}$ to the second equality, we obtain  similar equalities for $i^{e}$ and~$r^{e}$, proving the desired assertion.
\end{proof}

Combining this with the above observation, we have 

\begin{corollary}
 The full subcategory determined by torsionfree modules is reflective in the stable category of $\Lambda$. \qed
\end{corollary}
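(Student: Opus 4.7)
The plan is to recognize this corollary as a direct application of the preceding lemma to the specific reflective situation $\mathbf{X} = $ (torsionfree $\Lambda$-modules), with inclusion $i$ and reflector $r = (-)^{\sharp}$. So the only substantive thing to verify is the hypothesis of the lemma, namely that the endofunctor $ir = (-)^{\sharp}$ preserves projectives.

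First I would recall that $(-)^{\sharp} = (-)/t(-)$ is indeed a reflector onto the torsionfree subcategory, a fact stated in the text just before the lemma. Then the task reduces to showing that every projective $\Lambda$-module $P$ is torsionfree, i.e.\ $t(P) = 0$, for then $P^{\sharp} = P$ is trivially projective. This is the only real point to check, and I expect it to be the main (though minor) obstacle.

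For the torsion-freeness of projectives, I would argue in two steps. First, a free module $F = \bigoplus_{\alpha} \Lambda$ is torsionfree: for each index $\alpha$ the projection $\pi_{\alpha} : F \to \Lambda$ is a linear form, and the intersection of their kernels is zero, so the canonical map $e_{F} : F \to F^{\ast\ast}$ is injective. Second, since $t(-)$ is a subfunctor of the identity and a projective $P$ is a direct summand of a free module $F$, the inclusion $P \hookrightarrow F$ forces $t(P) \hookrightarrow t(F) = 0$, hence $t(P) = 0$ and $P^{\sharp} \simeq P$.

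With this verified, the hypothesis of the previous lemma is satisfied for $\mathbf{X} = $ torsionfree modules, and the lemma delivers the adjunction $(r^{e}, i^{e})$, which is precisely the statement that $\mathbf{X}^{e}$ is reflective in $\Lambda$-$\Modst$. No further calculation is required.
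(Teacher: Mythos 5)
Your proposal is correct and follows the paper's own route: the corollary is obtained by applying the preceding lemma to $\mathbf{X} =$ torsionfree modules with reflector $(-)^{\sharp}$, the only hypothesis to check being that $(-)^{\sharp}$ preserves projectives, which holds because projectives are torsionfree (so $P^{\sharp}\simeq P$). Your explicit verification of the torsion-freeness of projectives via coordinate projections on free modules and the subfunctor property of $t(-)$ is sound; the paper leaves this step implicit.
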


\section{Normal monomorphisms in the stable category}\label{NormalMonos}
Recall that a monomorphism (resp., epimorphism) in a category is said to be \texttt{normal} if it is a kernel (resp., cokernel) of some morphism. For example, in any preadditive category, all split monomorphisms and split epimorphisms are normal.

\begin{lemma}\label{essential}
Let $\Lambda$ be an arbitrary ring and $A$ a $\Lambda$-module. Suppose $A\rightarrow E$ is an essential extension such that 
the corresponding morphism $\underline{p}:E \rightarrow E/A$ is a kernel, in the stable category, of $\underline{f} : E/A \rightarrow X$ for some $f : E/A \to X$. Then $f$ is a monomorphism.
\end{lemma}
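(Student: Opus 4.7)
The plan is to show directly that $\ker f = 0$ by exploiting the universal property of $\underline{p}$ as a kernel together with the essentiality of $A$ in $E$.

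First I would set $K := \ker f$ with inclusion $j : K \to E/A$. Since $f j = 0$ in $\Lambda$-$\Mod$, certainly $\underline{f}\,\underline{j} = 0$ in the stable category. The assumption that $\underline{p}$ is a kernel of $\underline{f}$ then yields a morphism $\underline{g} : K \to E$ with $\underline{p}\,\underline{g} = \underline{j}$. Unpacking this, $p g - j$ factors through some projective $Q$, say $pg - j = \beta \alpha$ with $\alpha : K \to Q$ and $\beta : Q \to E/A$.

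Next I would correct $g$ to an honest lift. Because $p$ is surjective and $Q$ is projective, $\beta$ lifts to some $\gamma : Q \to E$ with $p\gamma = \beta$. Setting $g' := g - \gamma\alpha$, we obtain the strict equality $pg' = j$ in $\Lambda$-$\Mod$. Since $j$ is a monomorphism, so is $g'$, and for $x \in K$ we have $g'(x) \in A$ iff $p g'(x) = 0$ iff $j(x) = 0$ iff $x = 0$; equivalently, $g'(K) \cap A = 0$ (identifying $A$ with its image in $E$).

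Finally, I would invoke essentiality: since $A \subseteq E$ is an essential extension, every nonzero submodule of $E$ meets $A$ nontrivially. Combined with $g'(K) \cap A = 0$, this forces $g'(K) = 0$, and since $g'$ is injective, $K = 0$. Thus $f$ is a monomorphism.

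The only delicate step is the passage from the stable-category identity $\underline{p}\,\underline{g} = \underline{j}$ to an on-the-nose equation $p g' = j$ in $\Lambda$-$\Mod$; this is where the projective that witnesses the null-homotopy must be lifted along $p$, using surjectivity of $p$ and projectivity of $Q$. Everything else is straightforward once that lift is in hand.
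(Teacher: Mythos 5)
Your proof is correct and follows essentially the same route as the paper: obtain a lift of $\ker f$ over $p$ modulo projectives from the kernel property, correct it to an honest lift using surjectivity of $p$ and projectivity of the witnessing module (the paper packages this step as Lemma~\ref{lift}), and then conclude via $g'(K)\cap A=0$ and essentiality. No gaps.
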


\begin{proof}

Under the assumptions, $\ker f$ lifts over $p$ modulo projectives. By Lemma~\ref{lift}, $\ker f$ lifts over $p$ by way of some homomorphism $i : \Ker f \to E$. Since $\ker f$ is a mono, $i(\Ker f) \cap A = (0)$, and therefore $i(\Ker f) = 0$. Since $i$ is a monomorphism, $\Ker f = (0)$.
%
%
\end{proof}
Lemma~\ref{essential} implies

\begin{corollary}\label{inj-dim-1}
Let $\Lambda$ be an arbitrary ring, $P$ a projective $\Lambda$-module of injective dimension one, with a minimal injective resolution 
$0 \rightarrow P\rightarrow I_{0} \overset{p}\rightarrow I_{1}
\rightarrow 0$, where~$I_{0}$ is non-projective. Then $\underline{p}:I_{0}\rightarrow I_{1}$ is a non-normal monomorphism.
\end{corollary}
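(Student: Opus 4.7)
The plan is to derive the non-normality by contradiction, using Lemma~\ref{essential} together with the injectivity of $I_1$. First, I need to observe that $\underline{p}$ really is a monomorphism in the stable category. Since $p$ is an epimorphism (as a cokernel of $P\to I_0$) and $\ker p = (P\to I_0)$ factors through the projective module $P$, Proposition~\ref{mono}(b) applies and gives $\underline{p}$ a monomorphism.

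Next, I would assume for contradiction that $\underline{p}$ is normal, so that $\underline{p} = \ker\underline{f}$ for some homomorphism $f : I_1 \to X$. Because the resolution is a \emph{minimal} injective resolution, the inclusion $P\hookrightarrow I_0$ is an essential extension (in fact an injective envelope), and under the identification $I_1 \simeq I_0/P$ the map $p : I_0 \to I_0/P$ is exactly the canonical projection considered in Lemma~\ref{essential}. Invoking that lemma then forces $f$ to be a monomorphism in $\Lambda$-$\Mod$.

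Now I would exploit the fact that $I_1$ is injective (being a term in an injective resolution): the monomorphism $f : I_1 \to X$ must split in $\Lambda$-$\Mod$, so a fortiori $\underline{f}$ is a split monomorphism in the stable category. But any split monomorphism in a pointed category has a zero object as its kernel, so its kernel is a morphism out of the zero object. Since kernels are unique up to isomorphism, the domain $I_0$ of $\underline{p}$ would have to be a zero object of the stable category, i.e., projective (Lemma~\ref{criteria}), contradicting the assumption that $I_0$ is non-projective.

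I do not anticipate a serious obstacle here; the work is essentially packaged into Lemma~\ref{essential} and Proposition~\ref{mono}. The only point requiring a little care is the last step — justifying that if $\underline{f}$ is a split mono in the stable category then its kernel object is projective — but this is immediate from the additive structure of $\Lambda$-$\Modst$ and the characterization of zero objects in Lemma~\ref{criteria}.
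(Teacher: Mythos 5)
Your proposal is correct and follows essentially the same route as the paper: Proposition~\ref{mono}(b) gives that $\underline{p}$ is a monomorphism, Lemma~\ref{essential} (via the essentiality of $P\hookrightarrow I_0$ coming from minimality) forces any $f$ with $\underline{p}=\ker\underline{f}$ to be a monomorphism, injectivity of $I_1$ splits $f$ so that $\underline{f}$ is a monomorphism, and uniqueness of kernels then forces $I_0$ to be a zero object, i.e., projective — a contradiction. The only cosmetic difference is that the paper concludes directly from "$\underline{f}$ is a monomorphism, hence its kernel is a zero object," while you phrase the last step via the split monomorphism and Lemma~\ref{criteria}; both are valid.
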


\begin{proof}
By Proposition~\ref{mono}, $\underline{p}:I_{0}\rightarrow I_{1}$ is a monomorphism. If it is a kernel of some morphism 
$\underline{f}:I_{1}\rightarrow X$ in the stable category, then, by
Lemma~\ref{essential}, $f$ is a monomorphism. But then it is a split monomorphism, since $I_{1}$ is injective, and hence 
$\underline{f}$ is a monomorphism. Hence its kernel is a zero object, i.e. $I_{0}$ is projective, a contradiction.
\end{proof}

\begin{corollary}\label{hered-normal}
Let $\Lambda$ be a left hereditary ring, $P$ be a projective 
$\Lambda$-module, and $P\rightarrow I$ an injective envelope. If $I$ is non-projective (equivalently, $P$ is non-injective), then the corresponding $\underline{p}:I\rightarrow I/P$ is a non-normal monomorphism.
\end{corollary}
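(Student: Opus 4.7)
The plan is to reduce directly to Corollary~\ref{inj-dim-1}. First I would verify that $I/P$ is injective: over a left hereditary ring, quotients of injective modules are injective (this is the standard dual characterization, a consequence of $\mathrm{Ext}^{2}(-,-)=0$). Since $I$ is injective, so is $I/P$.

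Next, since $P\to I$ is an injective envelope, the inclusion is essential and $I$ is injective, so
\[
0\to P\to I\overset{p}{\longrightarrow} I/P\to 0
\]
is an injective resolution of $P$. It is minimal: $I$ is the injective envelope of the image of $P\to I$, namely $P$ itself, while $I/P$ is trivially the injective envelope of $\mathrm{im}\,p=I/P$. By hypothesis $I$ is non-projective, which also forces $P$ to be non-injective (otherwise the essential extension $P\to I$ with injective domain would be an isomorphism, making $I=P$ projective). Hence $P$ has injective dimension exactly one.

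With these ingredients in place, Corollary~\ref{inj-dim-1}, applied with $I_{0}=I$ and $I_{1}=I/P$, yields that $\underline{p}:I\to I/P$ is a non-normal monomorphism in the stable category. There is no genuine obstacle here; the entire content of the argument is the verification of the hypotheses of Corollary~\ref{inj-dim-1}, which rests on the closure of injectives under quotients in the hereditary setting.
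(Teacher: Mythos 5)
Your proof is correct and takes exactly the paper's route: the paper's entire proof of this corollary is that it ``immediately follows from Corollary~\ref{inj-dim-1},'' and you have simply spelled out the verification it leaves implicit (injectivity of $I/P$ over a left hereditary ring, minimality of the resulting resolution, and that $\idim P = 1$). Nothing is missing.
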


\begin{proof}
Immediately follows from Corollary~\ref{inj-dim-1}.
\end{proof}

Recall that a category is said to be \texttt{normal} if every monomorphism is normal.\footnote{There are several variants of this definition; the one used here is taken from~\cite{Mi}.} We now have

\begin{theorem}\label{normal}
The stable category of a left hereditary ring $\Lambda$ is normal if and only if the injective envelope of $_{\Lambda}\Lambda$ is projective.
\end{theorem}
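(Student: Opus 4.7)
The ``only if'' direction follows from Corollary~\ref{hered-normal} applied with $P = \Lambda$: if $E(_{\Lambda}\Lambda)$ is non-projective, then the canonical map $\underline{p}\colon E(\Lambda) \to E(\Lambda)/\Lambda$ is a non-normal monomorphism in the stable category, so the stable category fails to be normal.

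For the ``if'' direction, assume $E(_{\Lambda}\Lambda)$ is projective. The preliminary fact I will use is that $E(P)$ is projective for every projective $\Lambda$-module $P$: since $P$ is a direct summand of some $\Lambda^{(I)}$, and since $\Lambda$ is left Noetherian (a consequence of the hypotheses via the Colby--Rutter structure theorem), $E(\Lambda^{(I)}) \cong E(\Lambda)^{(I)}$ is projective as a direct sum of copies of the projective module $E(\Lambda)$; then $E(P) \hookrightarrow E(\Lambda)^{(I)}$ is a submodule of a projective, hence projective by left heredity.

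Given a monomorphism $\underline{f}\colon A \to B$ in the stable category, $K := \Ker f$ is projective by Theorem~\ref{characterization}, and so is $E(K)$. Using the injectivity of $E(K)$, extend the inclusion $K \hookrightarrow E(K)$ to a homomorphism $\beta\colon A \to E(K)$, and define $f'\colon A \to B' := B \oplus E(K)$ by $f'(a) := (f(a), \beta(a))$. Since $\beta|_K$ is injective, $\Ker f' = 0$, so $f'$ is a module-theoretic monomorphism. The canonical inclusion $\iota_B\colon B \hookrightarrow B'$ has projective cokernel $E(K)$, so $\underline{\iota_B}$ is an isomorphism in the stable category; moreover, the difference $f' - \iota_B \circ f\colon A \to B'$ factors as $A \xrightarrow{\beta} E(K) \hookrightarrow B'$ through the projective module $E(K)$, giving $\underline{f'} = \underline{\iota_B} \circ \underline{f}$.

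Since $f'$ is a module-theoretic monomorphism, the cokernel $\pi'\colon B' \twoheadrightarrow B'/f'(A)$ has $\ker \pi' = f'(A) \cong A$, and by Theorem~\ref{characterization}, $\ker(\underline{\pi'})$ is represented by the inclusion $f'(A) \hookrightarrow B'$, which is equivalent to $\underline{f'}$ as a subobject of $B'$. Hence $\underline{f'}$ is a normal monomorphism, and composing with the stable-category isomorphism $\underline{\iota_B}^{-1}$ shows that $\underline{f}$ is the kernel of $\underline{\pi' \circ \iota_B}\colon B \to B'/f'(A)$. The key insight, and the essential use of the hypothesis, is the replacement of $f$ by the module-theoretically injective $f'$ living in the stably isomorphic codomain $B'$; this bypasses the usual obstruction to normality, namely the failure of the short exact sequence $0 \to \Ker f \to A \to \mathrm{im}(f) \to 0$ to split when $\Ker f$ is not a direct summand of $A$, which is precisely where a naive attempt to identify $\underline{f}$ with the inclusion of its module-theoretic image into $B$ breaks down.
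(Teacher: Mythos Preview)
Your proof is correct and rests on the same core idea as the paper's: once $K=\Ker f$ is projective (Theorem~\ref{characterization}), extend $K\hookrightarrow E(K)$ over $A$ and exploit that $E(K)$ is itself projective, then finish via the fact that $\mathcal{Q}$ preserves kernels. The packaging differs slightly. The paper first replaces $\underline{f}$ by an epimorphism $p:A\twoheadrightarrow A/P$, builds $f:A\to I=E(P)$ and the induced $f':A/P\to I/P$ via the snake lemma, and shows $\underline{p}=\ker\underline{f'}$ using that $\underline{\ker f}$ is an isomorphism (since $f(A)\subset I$ is projective). You instead keep $f$ as is, enlarge the codomain to $B\oplus E(K)$ so that the new representative $f'=(f,\beta)$ is a module monomorphism, and then read off normality directly from $\mathcal{Q}$ preserving kernels together with the stable isomorphism $\underline{\iota_B}$. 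Your route avoids the reduction to an epimorphism and the snake-lemma diagram; the paper's route makes the morphism of which $\underline{f}$ is a kernel slightly more explicit. For the auxiliary fact that $E(P)$ is projective for every projective $P$, the paper simply cites~\cite[Theorem~3.2]{CR}, whereas you derive it from the Colby--Rutter structure theorem plus left heredity; both are legitimate appeals to~\cite{CR}.
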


\begin{proof}
The ``only if'' part follows immediately from 
Corollary~\ref{hered-normal}. For the ``if'' part, assume that the injective envelope of $\Lambda$ is projective. Then the same is true for any projective, as is shown in~\cite[Theorem 3.2]{CR}. Suppose the class of $p : A \to B$ is a monomorphism in the stable category. We may assume, without loss of generality, that~$p$ is onto. By Theorem~\ref{characterization}, the kernel~$P$ of~$p$ is projective, and we take the quotient map $p : A \to A/P$ as a representative of the original monomorphism in the stable category. 

Let $i':P\rightarrow I$ be the injective envelope of $P$. Extending $i'$ over $i$ and using the snake lemma, we have a commutative diagram 
\[
\xymatrix
	{
	&& 0 \ar[d] & 0 \ar[d] \\
	& & \Ker f \ar[d]^{\ker f} \ar@{=}[r] & \Ker f \ar[d]^{\ker f'} \\
	0 \ar[r] & P \ar[r]^{i} \ar@{=}[d] & A \ar[r]^{p} \ar[d]^{f} 
	& A/P \ar[d]^{f'} \ar[r] & 0 \\
	0 \ar[r] & P \ar[r]^{i'} & I \ar[r]^{p'} & I/i'(P) \ar[r] & 0
	}
\]
with exact rows and columns. We claim that 
$\underline{p} = \ker \underline{f}'$.  Indeed, the diagram shows that $p \ker f = \ker f'$. By Theorem~\ref{characterization}, 
$\underline{p}\, \underline{\ker f} = \ker \underline{f}'$. As we remarked above, $I$ is projective, hence $\underline{\ker f}$ is an isomorphism, and thus  $\underline{p}$ is a kernel of 
$\underline{f}'$.
\end{proof}

Recall that a category is said to be \texttt{well-powered} if any object $B$ has only a set of subobjects (i.e., of equivalence classes of monomorphisms with codomain $B$; two monomorphisms $m:A\rightarrowtail B$ and $m':A'\rightarrowtail B$ are said to be equivalent if there exists an isomorphism $i:A\rightarrow A'$ such that $m=im'$). 

\begin{corollary}\label{well-powered}
Let $\Lambda$ be a left hereditary ring such that the injective envelope of $_{\Lambda}\Lambda$ is projective. Then the stable category of $\Lambda$ is well-powered.
\end{corollary}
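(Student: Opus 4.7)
The plan is to reduce the problem of counting subobjects of $\underline{B}$ in the stable category to the (clearly set-sized) problem of counting submodules of $B$ in $\Lambda$-$\Mod$, using normality (Theorem~\ref{normal}) as the bridge and Theorem~\ref{characterization} as the technical engine.

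First I would observe that, by Theorem~\ref{characterization}, the quotient functor $\mathcal{Q}$ preserves both monomorphisms and kernels over a left hereditary ring. In particular, for any submodule $N \subseteq B$, the image under $\mathcal{Q}$ of the inclusion $N \hookrightarrow B$ is a monomorphism in the stable category. Thus I obtain a well-defined assignment
\[
\Phi : \{\text{submodules of } B\} \longrightarrow \{\text{subobjects of } \underline{B}\},
\qquad N \mapsto \underline{N \hookrightarrow B},
\]
from a set into the collection of subobjects of $\underline{B}$.

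Next I would show that $\Phi$ is surjective. Let $\underline{m} : \underline{A} \to \underline{B}$ be a monomorphism in the stable category. By Theorem~\ref{normal}, the hypothesis on the injective envelope of $_{\Lambda}\Lambda$ forces $\underline{m}$ to be normal, so $\underline{m} = \ker \underline{f}$ for some $\underline{f} : \underline{B} \to \underline{X}$. Pick any representative $f : B \to X$. Since $\mathcal{Q}$ preserves kernels (Theorem~\ref{characterization}), the morphism $\mathcal{Q}(\ker f)$ is a kernel of $\underline{f}$ in the stable category. By the uniqueness of kernels up to unique isomorphism, the monomorphisms $\underline{m}$ and $\mathcal{Q}(\ker f : \Ker f \hookrightarrow B)$ represent the same subobject of $\underline{B}$. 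Hence every subobject of $\underline{B}$ lies in the image of $\Phi$.

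Since the domain of $\Phi$ is a set and $\Phi$ is surjective, the collection of subobjects of $\underline{B}$ is a set, which is exactly the assertion that $\Lambda$-$\underline{\Mod}$ is well-powered. The only delicate point, and the step I would write out carefully, is the claim that $\underline{m}$ and $\mathcal{Q}(\ker f)$ represent the \emph{same} subobject rather than merely isomorphic ones in the loose sense; this follows from the universal property of kernels in the stable category combined with the fact, guaranteed by Theorem~\ref{characterization}, that kernels computed in $\Lambda$-$\Mod$ and then pushed down to $\Lambda$-$\underline{\Mod}$ coincide with kernels computed in $\Lambda$-$\underline{\Mod}$ directly.
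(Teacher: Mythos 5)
Your proof is correct and follows essentially the same route as the paper: both use Theorem~\ref{normal} to write an arbitrary stable monomorphism into $B$ as a kernel, then Theorem~\ref{characterization} (preservation of kernels by $\mathcal{Q}$) to identify its subobject class with that of an honest submodule inclusion $\Ker f \hookrightarrow B$. The only cosmetic difference is that you package the conclusion as a surjection from the set of submodules of $B$ onto the subobjects of $\underline{B}$, whereas the paper phrases it as an embedding of the class of stable subobjects into the set of subobjects of $B$ in $\Lambda$-$\mathrm{Mod}$; these are equivalent.
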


\begin{proof}
Suppose $B$ is a $\Lambda$-module, and a subobject of $B$ in the stable category is represented by a monomorphism 
$\underline{f}:A \to B$. By Theorem~\ref{normal}, $\underline{f}$ is a kernel of some $\underline{g}: B \to C$. By Theorem~\ref{characterization}, $\underline{\ker g}$ is also a kernel of 
$\underline{g}$. In any category, two kernels of the same morphism define the same subobject of the domain, hence there is an isomorphism $\underline{i} : A \to \Ker g$ making the diagram 
\[
\xymatrix
	{
	A \ar[r]^{\underline{f}} \ar[d]^{\underline{i}} & B \ar@{=}[d] \\
	\Ker g \ar[r]^{\underline{\ker g}} & B
	}
\] 
commute. Thus, the class of $\underline{f}$ (as a subobject of $B$) can be represented by $\underline{\ker g}$. But the latter (as a morphism in the stable category) is represented by the monomorphism $\ker g$. It is clear that if two such monomorphisms are equivalent as subobjects of $B$ in 
$\Lambda$-$\mathrm{Mod}$, then the same is true for the corresponding subobjects in the stable category. Thus, we have embedded the class of subobjects of $B$ in the stable category in the class of subobjects of $B$ in $\Lambda$-$\mathrm{Mod}$. Since the latter is a set, the stable category is well-powered.
\end{proof}

\begin{remark}\label{colby-rutter}
 Left hereditary rings $\Lambda$ such that the injective envelope of $_{\Lambda}\Lambda$ is projective were characterized in~\cite[Theorem~3.2]{CR}. These are precisely finite direct products of \texttt{complete blocked triangular matrix rings} over division rings. Recall that $\Lambda$ is called a complete blocked triangular matrix ring over a division algebra~$D$ if there is a finite-dimensional $D$-space $V$ and a chain of subspaces
\[
V \supseteq V_{1} \supseteq \ldots \supseteq V_{k} = (0) 
\]
such that $\Lambda$ consists of all linear transformations $\lambda$ of $V$ such that $\lambda (V_{i}) \subseteq V_{i}$ for all $i = 1, \ldots , k$.
\end{remark}

\section{Normal epimorphisms in the stable category}\label{NormalEpis}

Recall that a \texttt{weak kernel} of a morphism in a pointed category (i.e., in a category with a zero object) is defined as a weak equalizer of that morphism and the zero map, which is in turn defined by removing the uniqueness requirement from the definition of equalizer.

\begin{lemma}\label{weak-kernel-of-epi}
Let $\Lambda$ be an arbitrary ring, and  
$f : A\twoheadrightarrow B$ an epimorphism of $\Lambda$-modules. Then $\underline{\ker f}$ is a weak kernel of 
$\underline{f}$. 
\end{lemma}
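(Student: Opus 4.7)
The plan is to verify the two defining properties of a weak kernel directly from the short exact sequence
\[
0 \to \Ker f \overset{\ker f}{\longrightarrow} A \overset{f}{\longrightarrow} B \to 0
\]
associated to the epimorphism $f$. The zero-composition property is immediate: since $f \circ \ker f = 0$ in $\Lambda\text{-}\mathrm{Mod}$, we have $\underline{f} \cdot \underline{\ker f} = 0$ in the stable category.

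For the weak lifting property, I would take an arbitrary morphism $\underline{g} : X \to A$ with $\underline{f} \cdot \underline{g} = 0$ and produce an $\underline{h} : X \to \Ker f$ with $\underline{\ker f} \cdot \underline{h} = \underline{g}$. This is precisely the content of the half-exactness of $\h^{X}$ established in Proposition~\ref{half-exact}(a): applied to the short exact sequence above, the sequence
\[
\h^{X}(\Ker f) \xrightarrow{\h^{X}_{\ker f}} \h^{X}(A) \xrightarrow{\h^{X}_{f}} \h^{X}(B)
\]
is exact at the middle term. Since $\h^{X}_{f}(\underline{g}) = \underline{fg} = 0$, the element $\underline{g}$ lies in the image of $\h^{X}_{\ker f}$, which is exactly the desired lift $\underline{h}$.

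Since the defining property of a weak kernel requires only the \emph{existence}, not the uniqueness, of such a lift, these two observations suffice. There is no real obstacle here: the statement is essentially a repackaging of half-exactness into the language of weak limits, and no new constructions (in particular, no hypothesis on the ring $\Lambda$) are needed.
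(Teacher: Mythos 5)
Your proof is correct and follows exactly the route the paper takes: the paper's entire proof is the one-line remark that the lemma is a reformulation of the half-exactness of $\h^{X}$ (Proposition~\ref{half-exact}(a)), which is precisely the argument you spell out. No gaps; your version just makes the two weak-kernel conditions explicit.
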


\begin{proof}
This is just a reformulation of the fact that the functor $\h^{X}$ is half-exact.
\end{proof}

It is well-known that if an epimorphism is normal and has a kernel, then it is a cokernel of its kernel. One can easily notice that the same is true when ``kernel'' is replaced by ``weak kernel''. Taking into account Lemma~\ref{weak-kernel-of-epi}, we now have

\begin{lemma}\label{cokernel-of-weak-kernel}
Let $\Lambda$ be an arbitrary ring, and 
$f : A\twoheadrightarrow B$ an epimorphism of $\Lambda$-modules. Then $\underline{f}$ is a normal epimorphism if and only if  it is a cokernel of  $\underline{\ker f}$. \qed
\end{lemma}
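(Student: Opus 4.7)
The plan is to exploit the weak kernel property provided by Lemma~\ref{weak-kernel-of-epi} together with the basic equality $f \ker f = 0$, which passes to the stable category as $\underline{f}\,\underline{\ker f} = 0$. One direction of the biconditional is immediate, so the substance lies in the other direction, and the key technical observation is already flagged in the remark preceding the lemma: a normal epimorphism equipped with a weak kernel is automatically the cokernel of that weak kernel.

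For the ``if'' direction, if $\underline{f}$ is a cokernel of $\underline{\ker f}$, it is by definition a normal epimorphism.

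For the ``only if'' direction, assume $\underline{f}$ is normal, so $\underline{f} = \mathrm{coker}\,\underline{g}$ for some morphism $\underline{g}\colon \underline{X} \to \underline{A}$ in $\Lambda\text{-}\Modst$. I would verify the universal property of the cokernel of $\underline{\ker f}$ directly. First, $\underline{f}\,\underline{\ker f} = 0$ since this is the image in the stable category of $f\,\ker f = 0$. Next, because $\underline{f}\,\underline{g} = 0$ (as $\underline{f}$ is the cokernel of $\underline{g}$) and $\underline{\ker f}$ is a weak kernel of $\underline{f}$ by Lemma~\ref{weak-kernel-of-epi}, the morphism $\underline{g}$ factors as $\underline{g} = \underline{\ker f}\cdot \underline{u}$ for some $\underline{u}$. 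Now given any test morphism $\underline{h}\colon \underline{A} \to \underline{T}$ with $\underline{h}\,\underline{\ker f} = 0$, we obtain $\underline{h}\,\underline{g} = \underline{h}\,\underline{\ker f}\,\underline{u} = 0$, so by the cokernel property of $\underline{f}$ with respect to $\underline{g}$, there is a unique $\underline{v}\colon \underline{B} \to \underline{T}$ with $\underline{v}\,\underline{f} = \underline{h}$. Uniqueness of $\underline{v}$ follows from the fact that $\underline{f}$, being a cokernel, is in particular an epimorphism. This verifies that $\underline{f} = \mathrm{coker}\,\underline{\ker f}$.

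There is no real obstacle here; the proof is essentially a categorical manipulation once Lemma~\ref{weak-kernel-of-epi} is in hand. The only thing worth stating clearly is the generic fact that a (normal) epimorphism which admits a weak kernel must be a cokernel of that weak kernel, which is exactly the content of the argument above applied in our setting.
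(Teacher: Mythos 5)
Your proof is correct and is exactly the argument the paper has in mind: it invokes the standard fact that a normal epimorphism admitting a weak kernel is a cokernel of that weak kernel (the paper states this without proof, citing Lemma~\ref{weak-kernel-of-epi}), and you have simply written out the routine verification. No issues.
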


\begin{proposition}
Let $\Lambda$ be an arbitrary ring and 
$f:A\twoheadrightarrow B$ an epimorphism of $\Lambda$-modules whose kernel is contained in the torsion submodule 
$t(A)$ of $A$. Then $\underline{f}$ is a normal epimorphism if and only if $(\Ker f)^{\ast} = 0$.
\end{proposition}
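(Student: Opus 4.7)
The plan is to invoke Lemma~\ref{cokernel-of-weak-kernel}, which identifies normality of $\underline{f}$ with $\underline{f}$ being a cokernel of $\underline{\ker f}$ in the stable category. Set $i:=\ker f$ and $K:=\Ker f$; since $K\subseteq t(A)$, Proposition~\ref{kernel-in-torsion} already guarantees that $\underline{f}$ is an epimorphism, which makes the uniqueness clause of the cokernel property automatic. Two elementary observations will do the rest of the work: (a) if $M^{\ast}=0$, then every homomorphism from $M$ into a projective $\Lambda$-module vanishes (a projective embeds as a summand of some free module, and each coordinate of such a composite is a linear form on $M$); and (b) dually, for every projective $Q$ one has $t(Q)=0$, since the linear forms on $Q$ separate its points.

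For the direction $(\Leftarrow)$, assume $K^{\ast}=0$. Given $g:A\to X$ with $\underline{g}\,\underline{i}=0$, write $gi=\alpha\beta$ with $\beta:K\to P$ and $P$ projective. By (a), $\beta=0$, hence $gi=0$. The universal property of $f$ as the cokernel of $i$ in $\Lambda$-$\Mod$ yields $h:B\to X$ with $hf=g$; hence $\underline{h}\,\underline{f}=\underline{g}$, and uniqueness of $\underline{h}$ follows since $\underline{f}$ is epic. So $\underline{f}$ is a cokernel of $\underline{i}$ and therefore a normal epimorphism.

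For the converse $(\Rightarrow)$, let $\phi:K\to\Lambda$ be arbitrary, and form the pushout of $\phi$ along $i$:
\[
\xymatrix{
K \ar[r]^{i} \ar[d]_{\phi} & A \ar[d]^{\bar\phi} \\
\Lambda \ar[r]^{i'} & A'
}
\]
Then $\bar\phi\circ i=i'\phi$ factors through the projective $\Lambda$, so $\underline{\bar\phi}\,\underline{i}=0$. Since $\underline{f}$ is a cokernel of $\underline{i}$, there is $h:B\to A'$ such that $\bar\phi-hf=\alpha\beta$ for some $\beta:A\to Q$ and $\alpha:Q\to A'$ with $Q$ projective. Restricting to $K$ (and using $fi=0$) gives $i'\phi=\alpha(\beta i)$. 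By naturality of the torsion subfunctor, $\beta(K)\subseteq \beta(t(A))\subseteq t(Q)$, and (b) forces $t(Q)=0$, so $\beta i=0$. Hence $i'\phi=0$, and since $i'$ (a pushout of the monomorphism $i$) is itself a monomorphism, $\phi=0$. Thus $K^{\ast}=0$.

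The main obstacle is the converse direction: one must manufacture, from a candidate linear form $\phi$ on $K$, a witness morphism $\bar\phi:A\to A'$ that annihilates $\underline{i}$ in the stable category and then extract the desired vanishing $\phi=0$ from the cokernel property. The pushout of $\phi$ along $i$ is the natural construction, and the payoff is the combination of naturality of $t(-)$, the vanishing $t(Q)=0$ on projectives, and the hypothesis $K\subseteq t(A)$, which together force the projective factor $\beta$ to kill $K$.
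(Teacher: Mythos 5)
Your proof is correct, and the forward direction ($K^{\ast}=0$ implies normality) is essentially identical to the paper's: both reduce to showing $\underline{f}$ is a cokernel of $\underline{\ker f}$, observe that $g\,\ker f$ factoring through a projective forces $g\,\ker f=0$ outright when $K^{\ast}=0$, and then factor through $f$ in $\Lambda$-$\mathrm{Mod}$. In the converse direction you and the paper share the same skeleton (take a linear form $\phi$ on $K$, manufacture $g:A\to A'$ with $g\,i=i'\phi$ for a monomorphism $i'$ out of $\Lambda$, apply the cokernel property to get $g=hf+(\text{projective factor})$, restrict to $K$, and kill the projective correction term using $K\subseteq t(A)$), but the mechanism for producing $g$ differs: the paper composes $\phi$ with the injective envelope $\Lambda\to I$ and extends over the monomorphism $i$ by injectivity, whereas you take the pushout of $\phi$ along $i$. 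The pushout route is marginally more self-contained (no injectives needed, and it works verbatim in any abelian category with enough projectives where the relevant notion of torsion makes sense), at the small cost of having to note that pushouts preserve monomorphisms; the paper's route trades that for the availability of injective envelopes, which is automatic in a module category. Your handling of the projective correction term -- $\beta(K)\subseteq t(Q)=0$ by naturality of $t(-)$ -- is exactly the content of the paper's Lemma~\ref{torsion}, so no gap there either.
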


\begin{proof}
We begin by observing that as $\Ker f \subset t(A)$, 
Proposition~\ref{kernel-in-torsion} shows that $\underline{f}$ is an epimorphism. With this remark, we first establish the ``if'' part. Assuming $(\Ker f)^{\ast} = 0$, we want to show that 
$\underline{f}$ is the cokernel of $\underline{\ker f}$. Suppose
we have a commutative diagram
\[
\xymatrix
	{
	\Ker f\ar[r]^>>>>>{\ker f}\ar[d]^{\alpha}
	&A\ar[r]^>>>>>{f}\ar[d]^{g} &A/\Ker f \ar@{.>}[ld]^{s}\\
	P\ar[r]^{\beta}&C
	}
\]
of solid arrows, with $P$ projective. By assumption, $\alpha = 0$, hence $g \ker f = 0$. Then we have a dotted arrow 
$s : A/\Ker f\rightarrow C$ such that $s f=g$. The uniqueness of 
$\underline{s}$ follows from the fact that $\underline{f}$ is an epimorphism.

``Only if''. Assume that $\underline{f}$ is a normal epimorphism and let $\alpha:\Ker f\rightarrow \Lambda$ be an arbitrary linear form. Lifting the composition of $\alpha$ and the injective envelope $i' : \Lambda \rightarrow I$ of $\Lambda$ over $i := \ker f$, we have a commutative diagram 
\[
\xymatrix 
	{
	\Ker f \ar[r]^{i} \ar[d]^{\alpha} & A \ar[r]^>>>>>{f} \ar[d]^{g}
	& A/\Ker f \ar@{.>}[ld]_{s}\\
	\Lambda \ar[r]^{i'}&I
	}
\]
of solid arrows. By Lemma~\ref{cokernel-of-weak-kernel}, 
$\underline{f}$ is a cokernel of $\underline{i}$. Since 
$\underline{g}\underline{i}=0$, there is a homomorphism $s : A/\Ker f \rightarrow I$ such that
$\underline{s}\underline{f}=\underline{g}$. This implies that
$sf = g + h$, for some $h:A\rightarrow I$ with $\underline{h}=0$. Then $0 = sfi = gi + hi$. But $hi = 0$, by Lemma~\ref{torsion}. Therefore $gi=0$. Since $i'$ is a monomorphism, $\alpha$ is zero.
\end{proof}

\begin{corollary}
Let $\Lambda$ be any ring and $f:A\twoheadrightarrow B$ an epimorphism of $\Lambda$-modules. If $A^{\ast} = 0$, then 
$\underline{f}$ is a normal epimorphism if and only if
$(\Ker f)^{\ast} = 0$. \qed
\end{corollary}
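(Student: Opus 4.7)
The plan is to deduce this corollary directly from the preceding proposition with essentially no additional work, since the hypothesis $A^{\ast}=0$ is strictly stronger than the hypothesis $\Ker f\subset t(A)$ that appears there.

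First I would invoke the characterization recorded earlier in Section~\ref{epis-and-torsion}: for any $\Lambda$-module $A$, one has $t(A)=A$ if and only if $A^{\ast}=0$. Thus the assumption $A^{\ast}=0$ gives $t(A)=A$, and in particular $\Ker f\subset A=t(A)$.

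Having verified the torsion containment, the hypotheses of the preceding proposition are met, so $\underline{f}$ is a normal epimorphism if and only if $(\Ker f)^{\ast}=0$. This is exactly the conclusion. I do not expect any genuine obstacle here; the only thing to be careful about is citing the identification $\{A\mid t(A)=A\}=\{A\mid A^{\ast}=0\}$, which was recorded right after the definition of $t(-)$ at the start of Section~\ref{epis-and-torsion}. Hence the corollary reduces to a single-line argument applying the previous proposition.
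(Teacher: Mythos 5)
Your proposal is correct and is exactly the argument the paper intends: the corollary is stated with an immediate \qed because $A^{\ast}=0$ is equivalent to $t(A)=A$ (as noted at the start of Section~\ref{epis-and-torsion}), so $\Ker f\subset t(A)$ holds trivially and the preceding proposition applies verbatim. Nothing is missing.
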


Combining this corollary with Lemma~\ref{hered-stable}, we immediately have 

\begin{corollary}
Let $\Lambda$ be a left hereditary ring and $f:A\twoheadrightarrow B$ an epimorphism of $\Lambda$-modules. If $A$ is stable, then 
$\underline{f}$ is a normal epimorphism if and only if $\Ker f$ is stable. \qed
\end{corollary}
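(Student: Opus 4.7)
The plan is to reduce this corollary directly to the immediately preceding corollary by translating the stability hypothesis into the vanishing of the dual, using Lemma~\ref{hered-stable}.

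First, I would observe that since $\Lambda$ is left hereditary and $A$ is stable, Lemma~\ref{hered-stable} gives the equivalence of stability with $A^{\ast} = 0$; thus the hypothesis ``$A$ is stable'' is the same as the hypothesis ``$A^{\ast} = 0$'' in the previous corollary. Consequently, I would apply the previous corollary to conclude that $\underline{f}$ is a normal epimorphism if and only if $(\Ker f)^{\ast} = 0$.

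Next, I would apply Lemma~\ref{hered-stable} a second time, now to the submodule $\Ker f$ of $A$ (which is itself a $\Lambda$-module, and $\Lambda$ is still left hereditary), to convert the condition $(\Ker f)^{\ast} = 0$ into the equivalent condition that $\Ker f$ is stable. Chaining the two equivalences yields the asserted biconditional.

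There is essentially no obstacle here: the whole content is packaged in Lemma~\ref{hered-stable} and the previous corollary, and the argument is a two-step translation of hypotheses and conclusions between the ``dual-zero'' and ``stable'' formulations. The only thing to note is that Lemma~\ref{hered-stable} is applied twice, once to $A$ (to activate the previous corollary) and once to $\Ker f$ (to restate the conclusion in the stated form).
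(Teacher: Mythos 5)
Your proposal is correct and is exactly the paper's argument: the paper derives this corollary by ``combining'' the preceding corollary (the $A^{\ast}=0$ version) with Lemma~\ref{hered-stable}, which over a left hereditary ring identifies stability with vanishing of the dual, applied once to $A$ and once to $\Ker f$. Nothing is missing.
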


Next, for a left hereditary ring $\Lambda$, we want to give a necessary and sufficient condition for an epimorphism to represent a \texttt{normal} epimorphism in the stable category. Recall from 
page~\pageref{brackets} that, for an epimorphism 
$f : A \twoheadrightarrow B$, the corresponding short exact sequence $0 \to \Ker f \overset{i}{\to} A \overset{f}{\to} B \to 0$ was denoted by $[f]$. For $\alpha : \Ker f \to C$, taking a pushout of 
$(\alpha, i)$ results in a commutative diagram 
\[
\xymatrix
	{
	0 \ar[r] 
	& \Ker f \ar[r]^{i} \ar[d]^{\alpha} 
	& A \ar[d] \ar[r]^{f} 
	& B \ar@{=}[d] \ar[r] 
	& 0
\\
	0 \ar[r] 
	& C \ar[r]  
	& D  \ar[r] 
	& B \ar[r] 
	& 0
	}
\] 
the bottom row of which will be denoted by $\alpha[f]$.

\begin{lemma}\label{splitting}
Let $\Lambda$ be an arbitrary ring, and 
$f:A\twoheadrightarrow B$ an epimorphism of 
$\Lambda$-modules. Given a homomorphism 
$\alpha : \Ker f \to X$, the following conditions are equivalent:
\begin{enumerate}
 \item $\alpha[f]$ splits;
 \smallskip
 \item any chain map
\[
\xymatrix
	{
	0 \ar[r] 
	& \Ker f \ar[r]^{i} \ar[d]^{\alpha} 
	& A \ar[d]^{g} \ar[r]^{f} 
	& B \ar[d]^{h} \ar[r] & 0
\\
	0 \ar[r] 
	& X \ar[r]  
	& Y  \ar[r] 
	& Z \ar[r] 
	& 0
	}
\]
where the bottom row is exact, is null-homotopic;
\item in any commutative diagram 
\[
\xymatrix
	{
	0 \ar[r]
	& \Ker f \ar[r]^{i} \ar[d]^{\alpha} 
	& A \ar[d]^{g} \ar[r]^{f} \ar@{.>}[ld]^{s'}
	& B \ar[d]^{h} \ar[r]  \ar@{.>}[ld]^{s}
	& 0
\\
	& X \ar[r]^{\delta} 
	& Y \ar[r]^{\gamma}
	& Z \ar[r] 
	& 0
	}
\]
of solid arrows with exact rows there exist $s : B \to Y$ and $s' : A \to X$ such that $g = sf + \delta s'$ and $h = \gamma s$.
\end{enumerate}
\end{lemma}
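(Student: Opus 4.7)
My plan is to establish the cycle $(1)\Rightarrow(3)\Rightarrow(1)$, treating $(2)\Leftrightarrow(3)$ as essentially a reformulation. First I would observe that condition (3) encodes two of the three equations that define a chain null-homotopy between the two short exact sequences, the missing one being $\alpha = s'i$. This equation is forced automatically: composing $g = sf + \delta s'$ with $i$ yields $gi = \delta s'i$ since $fi = 0$, while the left square of the given chain map yields $gi = \delta\alpha$, and monomorphicity of $\delta$ (from exactness of the bottom row) then forces $s'i = \alpha$. Thus (3) is precisely the null-homotopy condition, which gives $(2)\Leftrightarrow(3)$.

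For $(1)\Rightarrow(3)$, my approach is to convert a splitting of $\alpha[f]$ into the desired diagonal maps. Denote the pushout of $(\alpha, i)$ by $D$, with legs $\pi_A : A \to D$ and $\pi_X : X \to D$, so that $\alpha[f]$ has $\pi_X$ as its left arrow. The splitting provides a retraction $r : D \to X$ with $r\pi_X = 1_X$. I set $s' := r\pi_A$; the pushout identity $\pi_A i = \pi_X \alpha$ gives $s'i = \alpha$ immediately. The map $g - \delta s' : A \to Y$ then satisfies $(g-\delta s')i = gi - \delta\alpha = 0$ by commutativity of the left square, hence factors through $f$ as $sf$ for a unique $s : B \to Y$. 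The remaining identity $h = \gamma s$ follows from $\gamma s f = \gamma(g - \delta s') = \gamma g = h f$ together with the epicness of $f$.

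For $(3)\Rightarrow(1)$, I would apply (3) to the canonical chain map from $[f]$ to $\alpha[f]$ itself, whose components are $\alpha$ on the left, $\pi_A$ in the middle, and $1_B$ on the right; this chain map exists by the defining commutativity of the pushout square. In this specialization the bottom arrows $\delta$ and $\gamma$ of (3) become the two arrows of $\alpha[f]$, so the condition $h = \gamma s$ becomes $1_B = (\text{right-hand arrow of } \alpha[f]) \circ s$, exhibiting $s : B \to D$ as a section and thereby splitting $\alpha[f]$.

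No real obstacle is expected: the argument is a direct manipulation of the pushout universal property together with the monomorphicity of the first arrow of a short exact sequence, and it mirrors the template set by Lemma~\ref{0-homotopy}. The only point requiring care is correctly identifying what becomes of the abstract bottom row of (3) when it is specialized to $\alpha[f]$ in the proof of $(3)\Rightarrow(1)$.
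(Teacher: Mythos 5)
Your arguments for $(1)\Rightarrow(3)$ and $(3)\Rightarrow(1)$ are correct, and your $(1)\Rightarrow(3)$ is in fact a cleaner, more direct route than the paper's (which goes $(1)\Rightarrow(2)\Rightarrow(3)$ by factoring chain maps through the pushout row $\alpha[f]$): defining $s' := r\pi_A$ from a retraction $r$ of $\alpha[f]$ and then factoring $g-\delta s'$ through $f=\mathrm{coker}\,i$ works verbatim, and notably never uses injectivity of $\delta$. But there is a genuine gap in your treatment of condition (2). You dismiss $(2)\Leftrightarrow(3)$ as a reformulation on the grounds that the missing homotopy equation $\alpha = s'i$ is recovered from ``monomorphicity of $\delta$ (from exactness of the bottom row).'' Look at the diagram in condition (3) again: its bottom row is $X \xrightarrow{\delta} Y \xrightarrow{\gamma} Z \to 0$, with \emph{no} zero on the left, so $\delta$ is not assumed monic there. (The paper's proof of $(2)\Rightarrow(3)$ explicitly flags this: ``the only difference is that the map $\delta$ \ldots is no longer assumed to be monic,'' and this extra generality is exactly what is needed later, e.g.\ in Lemma~\ref{weak-cokernel}.) Consequently your argument only yields $(3)\Rightarrow(2)$, where the bottom row genuinely is short exact; the converse direction $(2)\Rightarrow(3)$ asserts something about a strictly larger class of diagrams and requires an actual argument. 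As your proposal stands, you have $(1)\Leftrightarrow(3)$ and $(3)\Rightarrow(2)$, but no implication leading out of (2), so the three-way equivalence is not established.

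The repair is short, and you have two natural options. Either prove $(2)\Rightarrow(1)$ by the very specialization you already use for $(3)\Rightarrow(1)$: apply (2) to the canonical chain map $(\alpha,\pi_A,1_B)$ from $[f]$ to the short exact sequence $\alpha[f]$; the third homotopy equation reads $1_B = \gamma s$, which splits $\alpha[f]$. Or follow the paper's route for $(2)\Rightarrow(3)$: given a diagram as in (3), push out $(\alpha,i)$ to get a genuine short exact middle row $0\to X\to D\to B\to 0$, apply (2) to the chain map $(\alpha,k,1_B)$ into it, and transport the resulting homotopy along the induced map $D\to Y$. Either patch closes the cycle; without one of them the proof is incomplete.
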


\begin{proof}
(1) $\Rightarrow$ (2). Suppose $0 \to X \to Y \to Z \to 0$ is exact.
Taking a pushout of ($\alpha, i$), we have a commutative diagram  
\begin{equation}\label{roof}
\begin{gathered}
\xymatrix
	{
	&\Ker f \ar[rr]^{i} \ar[dl]_{\alpha}\ar '[d]^{\alpha}[dd]
	&
	&A \ar[dl]  \ar '[d]^{g} [dd] \ar[rr]^{f} 
	&
	&B \ar@{=}[ld] 	\ar[dd]^>>>>>>>>>>>>>>>>>{h}
\\
	X \ar@{=}[dr] \ar[rr] 
	&
	& D \ar@{.>}[dr]^{b} \ar[rr] 
	&
	& B \ar@{.>}[dr]^{b'} & 
\\
	& X \ar[rr] 
	&
	& Y \ar[rr] 
	&
	& Z 
	}
\end{gathered}
\end{equation}
of solid arrows, where the rows are short exact sequences. By the universal property of pushouts, we have a dotted arrow $b$, and therefore a dotted arrow $b'$, making the two squares and the triangle incident with $b$ commute. As $f$ is an epimorphism, it is now clear that the triangle on the right also commutes. As the triangle on the left is trivially commutative, we have that the vertical chain map factors through the contractible complex $\alpha[f]$, and is therefore null-homotopic.
\smallskip

(2) $\Rightarrow (3)$. 
Suppose $X \overset{\delta}{\to} Y \to Z \to 0$ is exact. Using the same construction as in the proof of the implication (1) $\Rightarrow$ (2), we have the same  commutative diagram~\eqref{roof}; the only difference is that the map 
$\delta : X \longrightarrow Y$ in the copy is no longer assumed to be monic:
\[
\xymatrix
	{
	&\Ker f \ar[rr]^{i} \ar[dl]_{\alpha}\ar '[d]^{\alpha}[dd] 
	&
	&A \ar[dl]_{k}  \ar '[d]^{g} [dd] \ar[rr]^{f} \ar@{.>}[llld]^{s'}
	&
	&B \ar@{=}[ld] 	\ar[dd]^>>>>>>>>>>>>>>>>>{h} \ar@{.>}[llld]^{s}
\\
	X \ar@{=}[dr] \ar[rr]_>>>>>>>{i'} 
	&
	& D \ar[dr]^{b} \ar[rr] 
	&
	& B \ar[dr]^{b'} & 
\\
	& X \ar[rr]^{\delta} 
	&
	& Y \ar[rr] 
	&
	& Z 
	}
\]
The middle row, being a pushout of a short exact sequence, is still short exact. 
By the assumption, the chain map $(\alpha, k, 1_{B})$ is null-homotopic. In particular, there are maps $s : B \to D$ and $s' : A \to X$ such that $k = sf +i's'$. Composing this equality with $b$ and using the commutativity of the square
$XXDY$, we have the desired assertion for $g$. Since $f$ is epic, the equality for $h$ now follows easily.

(3) $\Rightarrow (1)$. Specialize the given diagram to the pushout diagram of 
$(\alpha, i)$. Then~$s$ gives a splitting for $\gamma$, showing that $\alpha[f]$ is split.
\end{proof}

\begin{proposition}\label{pushout-split}
Let $\Lambda$ be an arbitrary ring and $f:A\twoheadrightarrow B$ an epimorphism of $\Lambda$-modules. If $\underline{f}$ is a normal epimorphism then the equivalent conditions of Lemma~\ref{splitting} hold for any $\alpha : \Ker f \to P$ with a projective $P$.
\end{proposition}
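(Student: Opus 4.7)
The plan is to verify condition (1) of Lemma~\ref{splitting}, namely that $\alpha[f]$ splits, and to exploit the characterization of normality provided by Lemma~\ref{cokernel-of-weak-kernel}: since $\underline{f}$ is a normal epimorphism, it is a cokernel of $\underline{\ker f} = \underline{i}$ in the stable category. Form the pushout square of $(\alpha,i)$, producing a commutative diagram
\[
\xymatrix
	{
	0 \ar[r]
	& \Ker f \ar[r]^{i} \ar[d]^{\alpha}
	& A \ar[d]^{\alpha'} \ar[r]^{f}
	& B \ar@{=}[d] \ar[r]
	& 0
\\
	0 \ar[r]
	& P \ar[r]^{\delta}
	& D  \ar[r]^{f'}
	& B \ar[r]
	& 0
	}
\]
whose bottom row is $\alpha[f]$; in particular, $f'$ is an epimorphism.

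Next, I would observe that $\alpha' i = \delta \alpha$ factors through the projective module $P$, so $\underline{\alpha'}\underline{i}=0$ in the stable category. Since $\underline{f}$ is a cokernel of $\underline{i}$, its universal property yields a morphism $\underline{t}:B\to D$ with $\underline{t}\,\underline{f}=\underline{\alpha'}$, i.e.\ $tf-\alpha'$ factors through a projective module.

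The last step is to upgrade this stable section into an actual one. Composing with $f'$ and using the pushout identity $f'\alpha' = f$, we obtain $\underline{f'}\,\underline{t}\,\underline{f}=\underline{f}$, hence $(\underline{f'}\,\underline{t}-\underline{1_B})\underline{f}=0$; because $\underline{f}$ is an epimorphism (being normal), this forces $\underline{f'}\,\underline{t}=\underline{1_B}$, so the homomorphism $f't-1_B:B\to B$ factors through a projective. Here Lemma~\ref{lift} intervenes: as $f'$ is an epimorphism co-terminal with $f't-1_B$, there exists $k:B\to D$ with $f'k=f't-1_B$, and then $f'(t-k)=1_B$, so $t-k$ splits $\alpha[f]$.

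The main obstacle is conceptual rather than computational: one must not confuse the existence of a stable splitting of $\underline{f'}$ with a genuine splitting of $f'$ in $\Lambda$-$\Mod$. The bridge between the two is provided precisely by Lemma~\ref{lift}, which allows the ``error'' $f't-1_B$ to be absorbed through the projective factorization into an honest module map $k:B\to D$. Everything else is the routine extraction of a morphism from the cokernel property of $\underline{f}$ in the stable category.
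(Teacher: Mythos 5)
Your proof is correct and follows essentially the same route as the paper's: form the pushout of $(\alpha,\ker f)$, use the cokernel property of $\underline{f}$ over its weak kernel to obtain a stable section of the pushed-out epimorphism, and then absorb the projective-factoring error term via Lemma~\ref{lift} to get an honest splitting of $\alpha[f]$. The only cosmetic difference is that you explicitly verify $\underline{\alpha'}\,\underline{i}=0$ before invoking the cokernel property, a step the paper leaves implicit.
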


\begin{proof}
Suppose $\Lambda$ is arbitrary and $\underline{f}$ is a normal epimorphism. Let  $\alpha : \Ker f \to P$ be an arbitrary homomorphism with a projective $P$. Taking a pushout of  
$(\alpha, i)$, where $i := \ker f$, yields a commutative diagram 
\[
\xymatrix
	{
	0 \ar[r] & \Ker f \ar[r]^{i} \ar[d]^{\alpha} 
	& A \ar[d]^{g} \ar[r]^{f} & B \ar@{.>}[ld]_>>>>>>>>>{s}\ar@{=}	[d] \ar[r] & 0\\
	0 \ar[r] & P \ar[r]  & D  \ar[r]^{\pi} & B \ar[r] & 0
	}
\]  
of solid arrows with exact rows. Since $\underline{f}$ is normal, it 
is a cokernel of its weak kernel~$\underline{i}$, and hence there is $s : B \to D$ such that $\underline{g} = \underline{sf}$. Since 
$\underline{f}$ is an epimorphism, $\underline{1_{B}} = \underline{\pi s}$. This means that $1_{B} = \pi s + h$ for some endomorphism $h$ of $B$ factoring through a projective. Since $\pi$ is an epimorphism, $h = \pi s'$ for some $s' : B \to D$. Then $s + s'$ is a splitting for $\pi$, showing that $\alpha [f]$ splits.
\end{proof}

\begin{lemma}\label{weak-cokernel}
Let $\Lambda$ be an arbitrary ring and $f:A\twoheadrightarrow B$ an epimorphism of $\Lambda$-modules. If the equivalent conditions of Lemma~\ref{splitting} hold for any $\alpha : \Ker f \to P$ with a projective $P$, then $\underline{f}$ is a weak cokernel of $\underline{\ker f}$.
\end{lemma}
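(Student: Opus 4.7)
My plan is to verify the weak cokernel property directly, using condition (3) of Lemma~\ref{splitting} as the key input.

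Suppose $\underline{g} : A \to Y$ satisfies $\underline{g}\,\underline{\ker f} = 0$, i.e., the composition $g \circ \ker f$ factors through a projective module. I would write this factorization as $g \circ (\ker f) = \beta \alpha$, with $\alpha : \Ker f \to P$ for some projective $P$ and $\beta : P \to Y$. Our task is to produce a homomorphism $s : B \to Y$ with $\underline{s}\,\underline{f} = \underline{g}$.

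The plan is to manufacture a diagram to which condition (3) of Lemma~\ref{splitting} applies, with $\alpha$ as the left vertical map. Take the bottom row to be
\[
P \xrightarrow{\ \beta\ } Y \xrightarrow{\ \gamma\ } Y/\beta(P) \to 0,
\]
which is exact by construction; here $X := P$, $\delta := \beta$, $Z := Y/\beta(P)$, and $\gamma$ is the canonical projection. Since $\gamma g \circ \ker f = \gamma \beta \alpha = 0$, the map $\gamma g : A \to Z$ vanishes on $\Ker f$ and so factors through $f$: there exists $h : B \to Z$ with $h f = \gamma g$. This yields a commutative diagram of solid arrows
\[
\xymatrix
	{
	0 \ar[r] & \Ker f \ar[r]^{\ker f} \ar[d]^{\alpha} & A \ar[d]^{g} \ar[r]^{f} & B \ar[d]^{h} \ar[r] & 0 \\
	& P \ar[r]^{\beta} & Y \ar[r]^{\gamma} & Y/\beta(P) \ar[r] & 0
	}
\]
of exactly the shape considered in condition (3) of Lemma~\ref{splitting}.

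By the standing hypothesis, the equivalent conditions of Lemma~\ref{splitting} hold for the map $\alpha : \Ker f \to P$ into the projective module $P$. Applying condition (3) to the diagram above, we obtain homomorphisms $s : B \to Y$ and $s' : A \to P$ with
\[
g = sf + \beta s'.
\]
Since $\beta s'$ factors through the projective $P$, this equation reads $\underline{g} = \underline{s}\,\underline{f}$ in the stable category, so $\underline{s}$ is the required lift. This establishes the weak cokernel property for $\underline{f}$ with respect to $\underline{\ker f}$, and I do not need to worry about uniqueness of $\underline{s}$ since "weak cokernel" only demands existence. The only nonobvious step is setting up the bottom row and the map $h$ so that condition (3) becomes applicable, but this is forced by the factorization of $g \circ \ker f$ through a projective.
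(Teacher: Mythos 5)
Your proof is correct and follows essentially the same route as the paper's: both take the factorization $g\circ\ker f = \beta\alpha$ through a projective $P$, form the exact bottom row $P \to Y \to Y/\beta(P) \to 0$ with the induced map on $B$, and invoke condition (3) of Lemma~\ref{splitting} to get $g = sf + \beta s'$, whence $\underline{g} = \underline{s}\,\underline{f}$. The only difference is that you spell out the construction of the induced map $h : B \to Y/\beta(P)$, which the paper leaves implicit in its diagram.
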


\begin{proof}
Suppose we have $g : A \to X$ such that $gi = \delta \alpha$, where $i := \ker f$ and the codomain $P$ of~$\alpha$ is projective. This results in a commutative diagram
\[
\xymatrix
	{
	0 \ar[r] 
	& \Ker f \ar[r]^{i} \ar[d]^{\alpha} 
	& A \ar@{.>}[dl]_{s'} \ar[d]_{g} \ar[r]^{f} 
	& B \ar@{.>}[ld]_>>>>>>>{s} \ar[d] \ar[r] & 0
\\
	& P \ar[r]^{\delta}
	& Y \ar[r]
	&  Z \ar[r] & 0
	}
\] 
of solid arrows with exact rows. By Lemma~\ref{splitting}, there are maps $s$ and $s'$ such that $g = sf + \delta s'$, showing that $\underline{g} = \underline{sf}$.
\end{proof}

\begin{theorem}\label{normal-epi}
 Let $\Lambda$ be a {left hereditary} ring and 
 $f:A\twoheadrightarrow B$ an epimorphism of $\Lambda$-modules. The following conditions are equivalent:
 
\begin{enumerate}

\item $\underline{f}$ is a normal epimorphism;
\smallskip

\item the equivalent conditions of Lemma~\ref{splitting} hold for any  $\alpha : \Ker f \to P$ with a projective $P$.

\end{enumerate} 

\end{theorem}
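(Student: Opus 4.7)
The plan is as follows. The implication $(1) \Rightarrow (2)$ is Proposition~\ref{pushout-split}, which holds over any ring, so the substance is in $(2) \Rightarrow (1)$, where left-hereditarity is used. My strategy combines two facts already in hand. First, Lemma~\ref{weak-cokernel} shows that condition~(2) already makes $\underline{f}$ a weak cokernel of $\underline{\ker f}$. Second, by Lemma~\ref{cokernel-of-weak-kernel}, $\underline{f}$ is a normal epimorphism if and only if it is a cokernel (not merely a weak cokernel) of $\underline{\ker f}$. Since a weak cokernel is a cokernel exactly when the morphism is epic, the whole proof reduces to showing that condition~(2) forces $\underline{f}$ to be an epimorphism in the stable category.

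To prove $\underline{f}$ is epic, I will take $\phi : B \to Y$ such that $\phi f$ factors through a projective, say $\phi f = \gamma \beta$ with $\beta : A \to P$, $\gamma : P \to Y$, and $P$ projective, and show $\phi$ factors through a projective. Since $f$ is epic, $\phi(B) \subseteq \gamma(P)$, so $\phi$ corestricts to $\bar{\phi} : B \to \gamma(P)$. Writing $\bar{\gamma} : P \twoheadrightarrow \gamma(P)$ for the corestriction and noting $\gamma \beta i = \phi f i = 0$ (with $i := \ker f$), we see that $\beta i$ factors through $\ker \gamma \subseteq P$. This is the one place left-hereditarity is used: as a submodule of a projective, $\ker \gamma$ is projective.

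These choices produce the chain map
\[
\xymatrix{
0 \ar[r] & \Ker f \ar[r]^{i} \ar[d]^{\beta i} & A \ar[r]^{f} \ar[d]^{\beta} & B \ar[r] \ar[d]^{\bar{\phi}} & 0 \\
0 \ar[r] & \ker \gamma \ar[r] & P \ar[r]^{\bar{\gamma}} & \gamma(P) \ar[r] & 0
}
\]
with exact rows. Applying hypothesis~(2) of the theorem to $\alpha := \beta i : \Ker f \to \ker \gamma$, a map into a projective, I get that $\alpha[f]$ splits; by Lemma~\ref{splitting} the displayed chain map is then null-homotopic. Condition~(3) of Lemma~\ref{splitting} supplies $s : B \to P$ with $\bar{\phi} = \bar{\gamma} s$; composing with the inclusion $\gamma(P) \hookrightarrow Y$ yields $\phi = \gamma s$, exhibiting $\phi$ as a map factoring through the projective $P$, as required.

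The main obstacle is spotting the right chain map to feed into Lemma~\ref{splitting}: the naive choice of putting $\gamma$ itself in the bottom row fails because $\gamma$ need not be surjective, and once one corestricts to $\gamma(P)$ one has to verify that $\ker \gamma$ is projective, which is precisely where left-hereditarity is indispensable. Everything else --- the weak-cokernel half of the argument and the deduction that an epic weak cokernel is a cokernel --- is purely formal.
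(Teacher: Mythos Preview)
Your proof is correct and follows the same overall architecture as the paper: $(1)\Rightarrow(2)$ is Proposition~\ref{pushout-split}, and for $(2)\Rightarrow(1)$ you invoke Lemma~\ref{weak-cokernel} to get a weak cokernel and then reduce to showing $\underline{f}$ is epic, using left-hereditarity to guarantee that a certain submodule of a projective is projective.

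The one genuine difference is in how the epimorphism claim is established. The paper appeals to the criterion of Theorem~\ref{char-epi}: for each $h:A\to Q$ with $Q$ projective it forms the pushout square, observes that $\Ker(h\ulcorner^{f})\subseteq Q$ is projective by left-hereditarity, and then applies hypothesis~(2) to the induced $\alpha:\Ker f\to \Ker(h\ulcorner^{f})$ to get the null-homotopy required by Theorem~\ref{char-epi}. You instead argue straight from the definition: given $\phi f=\gamma\beta$ through a projective $P$, you corestrict to $\gamma(P)$, use left-hereditarity on $\ker\gamma\subseteq P$, and feed the resulting chain map into Lemma~\ref{splitting} directly. Your route is slightly more self-contained (it avoids the detour through Theorem~\ref{char-epi}), while the paper's route makes the parallel with the epimorphism criterion more visible; the underlying idea---left-hereditarity supplies the projective target needed to trigger hypothesis~(2)---is identical in both.
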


\begin{proof}
(1) $\Rightarrow$ (2).  This is Proposition~\ref{pushout-split}.
\smallskip

(2) $\Rightarrow$ (1). As Lemma~\ref{weak-cokernel} shows that 
$\underline{f}$ is a weak cokernel of the class of the kernel~$i$ of
$f$, we only need to show that $\underline{f}$ is an epimorphism. For that, we shall use Theorem~\ref{char-epi}. Taking an arbitrary $h : A \to Q$ with $Q$ projective and constructing $h \ulcorner^{f}$, we have a commutative diagram 

\[
\xymatrix
	{
	0 \ar[r] 
	& \Ker f \ar[r] \ar[d]^{\alpha} 
	& A \ar[d]^{h} \ar[r]^{f} 
	& B \ar[d] \ar[r] 	
	& 0
\\
	0 \ar[r] 
	& P \ar[r]  
	& Q  \ar[r]^{h \ulcorner^{f}} 
	& D \ar[r] 
	& 0
	}
\] 
with exact rows.  Since $\Lambda$ is left hereditary, $P = \Ker (h \ulcorner^{f})$ is projective. By the assumption, the chain map $\langle h \ulcorner^{f}\rangle$ is null-homotopic. The result now follows from Theorem~\ref{char-epi}.
\end{proof}

Recall that a category is said to be \texttt{conormal} (in the sense of \cite{Mi}) if any epimorphism is normal.

\begin{lemma}\label{conormal}
Let $\Lambda$ be a left hereditary ring. If the injective envelope of
$_{\Lambda}\Lambda$ is projective, then $\Lambda$-$\Modst$ is conormal.
\end{lemma}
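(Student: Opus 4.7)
The plan is to verify the hypothesis of Theorem~\ref{normal-epi} for every epimorphism in the stable category. Given an arbitrary epimorphism $\underline{f}$, I first replace it with a representative that is a module epimorphism $f : A \twoheadrightarrow B$, using the lemma (just after Lemma~\ref{criteria}) that such a representative always exists. Fixing $\alpha : \Ker f \to P$ with $P$ projective, it suffices by Theorem~\ref{normal-epi} to show that $\alpha[f]$ splits, which amounts to extending $\alpha$ along $i := \ker f$ to some $\hat{\alpha} : A \to P$ (the universal property of the pushout then converts such an extension into a retraction of the pushout inclusion $P \hookrightarrow D$).

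The key input is the Colby-Rutter result~\cite[Theorem 3.2]{CR}, already invoked in the proof of Theorem~\ref{normal}: under our hypothesis, the injective envelope of any projective module is projective. Let $\iota : P \hookrightarrow I$ be the injective envelope of $P$; then $I$ is simultaneously injective and projective. First, injectivity of $I$ lets me extend $\iota \alpha$ to some $\beta : A \to I$ with $\beta i = \iota \alpha$. Next, form the pushout of $(\beta, f)$:
\[
\xymatrix
	{
	A \ar[r]^{f} \ar[d]_{\beta}
	& B \ar[d]^{\beta'} \ar@{.>}[dl]_{s}
\\
	I \ar[r]_{\beta\ulcorner^{f}}
	& D'
	}
\]
Because $I$ is projective and $\underline{f}$ is an epimorphism, Theorem~\ref{char-epi}(2) together with Lemma~\ref{0-homotopy}(3) supplies a diagonal $s : B \to I$ with $\beta \ulcorner^{f} s = \beta'$.

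The final step is to extract $\hat{\alpha}$ from the difference $\beta - sf : A \to I$. Post-composing with $\beta\ulcorner^{f}$ and using $\beta\ulcorner^{f} \beta = \beta' f$ gives $\beta\ulcorner^{f}(\beta - sf) = \beta' f - \beta' f = 0$, so $(\beta - sf)(A) \subseteq \Ker(\beta\ulcorner^{f})$; a direct computation with the explicit pushout identifies this kernel with $\beta(\Ker f) = \iota \alpha (\Ker f) \subseteq \iota(P)$. Hence $\beta - sf$ factors uniquely as $\iota \hat{\alpha}$ for some $\hat{\alpha} : A \to P$, and restricting the equality $\iota \hat{\alpha} = \beta - sf$ to $\Ker f$ (using $sfi = 0$ and the injectivity of $\iota$) yields $\hat{\alpha} i = \alpha$. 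Thus $\alpha$ extends, $\alpha[f]$ splits, and Theorem~\ref{normal-epi} gives the normality of $\underline{f}$. The main subtlety is the twofold role of $I$: its injectivity is needed to construct $\beta$, while its projectivity (precisely what the hypothesis buys us) is needed to apply the epimorphism criterion of Theorem~\ref{char-epi} to the auxiliary pushout.
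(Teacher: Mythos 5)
Your argument is correct and takes essentially the same route as the paper's: both reduce to splitting $\alpha[f]$ via Theorem~\ref{normal-epi}, invoke Colby--Rutter to make the injective envelope $I$ of $P$ projective, extend $\iota\alpha$ to a map $A \to I$ (your $\beta$ coincides with the paper's composite $\varphi h$), and apply Theorem~\ref{char-epi} to the pushout of that map along $f$ to obtain $s : B \to I$. The only difference is cosmetic: the paper concludes by noting that the middle row is a pullback of a sequence along a map that lifts over its epimorphism, hence split, whereas you extract the extension $\hat{\alpha}$ (equivalently, the retraction) explicitly from $\beta - sf$.
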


\begin{proof}
Let $f:A\rightarrow B$ be an epimorphism such that 
$\underline{f}$ is also an epimorphism. To prove that 
$\underline{f}$ is normal we use Theorem~\ref{normal-epi}. Thus, we need to show that $\alpha[f]$ splits for any 
$\alpha : \Ker f \to P$ with a projective $P$. Taking a pushout of 
$(\alpha, i)$ we have a commutative diagram 
\[
\xymatrix
	{
	0 \ar[r] & \Ker f \ar[r]^{i} \ar[d]^{\alpha} 
	& A \ar[d]_{h} \ar[r]^{f} 
	& B \ar@{=}[d] \ar[r] 
	& 0\\
	0 \ar[r] & P \ar[r]^{\beta}  & D  \ar[r]^{\pi} & B \ar[r] & 0
	}
\]  
with exact rows. Let $\iota : P \to I$ be the injective envelope of 
$P$. Extending $\iota$ over $\beta$, we have a map $\varphi : D \to I$. Taking a pushout of $(\varphi h, f)$, we have a commutative diagram
\[ 
\xymatrix
	{
	0 \ar[r] 
	& \Ker f \ar[r]^{i} \ar[d]^{\alpha} 
	& A \ar[d]_{h} \ar[rr]^{f} 
	&& B \ar@{=}[d] \ar[r] \ar@{.>}[llddd]_{s}
	& 0\\
	0 \ar[r] 
	& P \ar[r]^{\beta} \ar@{=}[dd] 
	& D \ar[rr]^>>>>>>>>>>>>>>>{\pi} \ar[dd]_{\varphi} 
	&& B \ar[r] \ar[dd]^{\varphi'} \ar[dl] 
	& 0\\
	  &&&Y \ar@{.>}[dr]^{u} &&\\
	0 \ar[r]
	& P \ar[r]^{\iota}
	& I \ar[rr]^>>>>>>{\pi'} \ar[ur]_{(\varphi h) \ulcorner^{f}}
	&& X \ar[r]
	& 0
	}
\]  
of solid arrows, where $X : = \mathrm{Coker}\,\iota$. The two triangles incident with the dotted arrow $u : Y \to X$, which arises by the universal property of pushouts, also commute. As we remarked before, $I$ is projective by \cite{CR}. By Theorem~\ref{char-epi}, there is a dotted arrow $s : B \to I$ making the square $BIYB$ commute.
A simple diagram chase now shows that $\pi' s = \varphi'$. This means that the middle row, which is a pullback of the bottom row, is split. Hence $\underline{f}$ is a normal epimorphism, showing that the stable category of $\Lambda$ is conormal.
\end{proof}

Now we are going to deal with the issue whether the converse of the above theorem is true. First, we give

\begin{lemma}\label{proj-embed}
Let $\Lambda$ be a left hereditary ring, $A$ a stable $\Lambda$-module, $P$ a nonzero projective submodule of $A$, and $p : A \to A/P$ the canonical projection. Then~$\underline{p}$ is a bimorphism (i.e., both an epimorphism and a monomorphism) in the stable category, but not a split monomorphism, and hence not an isomorphism.
\end{lemma}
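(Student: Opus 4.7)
The plan is to verify three separate assertions about $\underline{p}$: that it is a monomorphism, that it is an epimorphism, and that it fails to be a split monomorphism (from which non-isomorphism is then immediate, since every isomorphism is in particular a split monomorphism). The first two reduce at once to results already in hand; the third is where the combination of hypotheses really does work.

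For the monomorphism claim, I would note that $\ker p : P \to A$ has projective domain, so it trivially factors through a projective, and since $p$ is surjective, Proposition~\ref{mono}(b) yields that $\underline{p}$ is a monomorphism. For the epimorphism claim, Corollary~\ref{epi-cases}(a) applies directly: $A$ is stable and $p$ is an epimorphism of modules, hence $\underline{p}$ is an epimorphism in the stable category. (Equivalently, one can invoke Theorem~\ref{hered-epi} and observe that stability of $A$ forces the only relevant decomposition to be $A = A \oplus 0$, whence the decomposition $A/P = (A/P) \oplus 0$ verifies the criterion trivially.)

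For the non-split-monomorphism claim I would argue by contradiction. The key preparatory remark is Lemma~\ref{hered-stable}: over a left hereditary ring, every stable module is totally stable, so $A$ is totally stable. If $\underline{p}$ were a split monomorphism, then by Lemma~\ref{split-mono} (which is applicable precisely because $A$ is totally stable) the homomorphism $p$ itself would split in $\Lambda$-$\mathrm{Mod}$. But $p$ is surjective, and a split monomorphism that is also an epimorphism is an isomorphism; this would force $P = \Ker p = 0$, contradicting the hypothesis $P \neq 0$. The ``hence not an isomorphism'' conclusion is then automatic.

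I expect no substantive obstacle: the proof is a straightforward assembly of earlier results, with the only subtlety being to identify where each hypothesis is used. Left hereditariness of $\Lambda$ enters through Lemma~\ref{hered-stable} (to upgrade \emph{stable} to \emph{totally stable}); stability of $A$ is needed to invoke that lemma and also to get the epimorphism conclusion via Corollary~\ref{epi-cases}(a); projectivity of $P$ gives the monomorphism via Proposition~\ref{mono}(b); and $P \neq 0$ produces the final contradiction.
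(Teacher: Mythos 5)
Your proof is correct and follows essentially the same route as the paper: Corollary~\ref{epi-cases}(a) for the epimorphism claim, and Lemma~\ref{hered-stable} together with Lemma~\ref{split-mono} to rule out a split monomorphism (the paper cites Theorem~\ref{characterization} for the monomorphism step where you cite Proposition~\ref{mono}(b), but these are interchangeable here since $\Ker p = P$ is projective). No gaps.
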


\begin{proof}
By Theorem~\ref{characterization}, $\underline{p}$ is a monomorphism. Corollary~\ref{epi-cases}~(a) shows that 
$\underline{p}$ is an epimorphism. By Lemma~\ref{split-mono} and Lemma~\ref{hered-stable}, $\underline{p}$ is not a split monomorphism.
\end{proof}

We shall now describe a large class of left hereditary rings for which a pair $(P,A)$ satisfying the conditions of 
Lemma~\ref{proj-embed} exists.

\begin{theorem}\label{existence}
 Suppose that $\Lambda$ is a left hereditary ring such that:
\begin{itemize}
 \item[(a)] $\Lambda$ has the DCC on direct summands of 
 $_{\Lambda}\Lambda$, and
 \smallskip
 \item[(b)] the injective envelope of $_{\Lambda}\Lambda$ is not projective.
\end{itemize}
Then there is a nonzero projective $\Lambda$-module with a stable injective envelope.
\end{theorem}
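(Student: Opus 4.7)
The plan is to use hypothesis (a) to decompose $_{\Lambda}\Lambda$ into indecomposable projective summands, select one whose injective envelope is not projective, and then descend by iteratively intersecting with ``stable parts'' of successive envelopes, producing a strictly descending chain of projective direct summands of $\Lambda$ that by DCC terminates at a module with stable envelope.

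Concretely, by (a) write $_{\Lambda}\Lambda = P_1 \oplus \cdots \oplus P_n$ with each $P_i$ indecomposable projective (via a complete set of primitive orthogonal idempotents). Since $E(_{\Lambda}\Lambda) = \bigoplus_i E(P_i)$ is not projective by (b), some $E(P_j)$ is not projective; set $P^{(0)} := P_j$. I would then construct inductively a strictly descending sequence $P^{(0)} \supsetneq P^{(1)} \supsetneq \cdots$ of nonzero projective direct summands of $\Lambda$ by the following peeling procedure. Given $P^{(k)}$ with $E(P^{(k)})$ not projective and not stable, pick a decomposition $E(P^{(k)}) = Q_{k+1} \oplus R_{k+1}$ with $Q_{k+1}$ a nonzero projective direct summand; the non-projectivity of $E(P^{(k)})$ forces $R_{k+1} \neq 0$. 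Set $P^{(k+1)} := P^{(k)} \cap R_{k+1}$. Essentiality of $P^{(k)}$ in $E(P^{(k)})$, together with $Q_{k+1}, R_{k+1} \neq 0$, gives $0 \neq P^{(k+1)} \subsetneq P^{(k)}$. The key input of hereditariness is the observation that the natural injection $P^{(k)}/P^{(k+1)} \hookrightarrow E(P^{(k)})/R_{k+1} \cong Q_{k+1}$ presents $P^{(k)}/P^{(k+1)}$ as a submodule of a projective, whence it is itself projective, and the short exact sequence $0 \to P^{(k+1)} \to P^{(k)} \to P^{(k)}/P^{(k+1)} \to 0$ splits; thus $P^{(k+1)}$ is a direct summand of $P^{(k)}$ and, transitively, of $_{\Lambda}\Lambda$.

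By hypothesis (a), this descending chain of direct summands of $_{\Lambda}\Lambda$ must terminate at some $P^{(K)}$. The construction can fail to extend at stage $K$ only if $E(P^{(K)})$ admits no nonzero projective direct summand, i.e.\ is stable; taking $P' := P^{(K)}$ thus produces the required nonzero projective module with stable injective envelope.

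The principal obstacle will be preserving the invariant ``$E(P^{(k)})$ is not projective'' throughout the iteration: if at some step $E(P^{(k+1)})$ inadvertently turns out to be projective, the next peeling collapses ($R_{k+2} = 0$ and $P^{(k+2)} = 0$) and the chain terminates prematurely. I would handle this by rechoosing the decomposition of $E(P^{(k)})$ at the relevant stage so as to absorb any projective direct summand of $R_{k+1}$ into $Q_{k+1}$, iterating this absorption until $Q_{k+1}$ is a maximal projective direct summand of $E(P^{(k)})$; the existence of such a maximal summand is the most delicate point of the argument, but once established, $R_{k+1}$ is stable and $E(P^{(k+1)})$, being a direct summand of this stable injective module, is automatically stable, terminating the construction.
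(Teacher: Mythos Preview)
Your peeling procedure is essentially the same as the paper's (the paper starts with $P^{(0)}=\Lambda$ rather than an indecomposable summand, but this is cosmetic). The difference is that the ``principal obstacle'' you flag is a phantom, and your proposed workaround introduces a genuine difficulty where none existed.

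The point you are missing is that $P^{(k+1)}$ is \emph{essential} in $R_{k+1}$, so that $E(P^{(k+1)}) = R_{k+1}$ on the nose. Indeed, if $0\neq X\subseteq R_{k+1}\subseteq E(P^{(k)})$, then essentiality of $P^{(k)}$ gives $0\neq X\cap P^{(k)}\subseteq R_{k+1}\cap P^{(k)}=P^{(k+1)}$. Since $E(P^{(k)})=Q_{k+1}\oplus R_{k+1}$ with $Q_{k+1}$ projective and $E(P^{(k)})$ not projective, $R_{k+1}=E(P^{(k+1)})$ is automatically not projective, and the invariant propagates for free. The chain is then strictly descending in direct summands of $\Lambda$, DCC forces termination, and termination can only occur when $E(P^{(K)})$ has no nonzero projective summand, i.e.\ is stable. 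This is exactly the paper's argument.

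By contrast, your fix---absorbing projective summands until $Q_{k+1}$ is a \emph{maximal} projective direct summand of $E(P^{(k)})$---asks for something you correctly sense is delicate (it is not clear a priori that such a maximal summand exists, nor that the absorption terminates), and it is simply not needed. Drop the search for a maximal projective summand, record the essentiality of $P^{(k+1)}$ in $R_{k+1}$, and your proof is complete.
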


\begin{proof}
 Let $P_{0} := \Lambda$ and let $i_{0} : P_{0} \to I_{0}$ be an injective envelope. If $I_{0}$ is stable, we are done. If not, then 
 $I_{0} = I_{1} \oplus Q_{0}$, where $Q_{0}$ is a nonzero projective injective and, by (b), $I_{1}$ is not projective, and hence nonzero. Let $f_{0}$ be the composition $P_{0} \overset{i_{0}}{\to} I_{1} \oplus Q_{0} \to Q_{0}$, where the last map is the canonical projection, and let $P_{0} \overset{e}{\to} f_{0}(P_{0}) \overset{m}{\to} Q_{0}$ be the mono-epi factorization of $f_{0}$. Since $\Lambda$ is left hereditary and $Q_{0}$ is projective, $f_{0}(P_{0})$ is projective too, and we have a commutative diagram 

\[
\xymatrix
	{
	&0 \ar[d] 
	&0\ar[d] 
	&0 \ar[d] 
	& 
\\
	0 \ar[r] 
	& P_{1} \ar[d] \ar[r]^{i_{1}} 
	& I_{1} \ar[d] \ar[r] 
	& I_{1}/P_{1} \ar[d] \ar[r] 
	& 0
\\
	0 \ar[r] 
	& P_{0} \ar[d]^{e} \ar[r]^{i_{0}} 
	& I_{1} \oplus Q_{0} \ar[d] \ar[r] 
	& I_{0}/P_{0} \ar[d] \ar[r] 
	& 0 
\\
	0 \ar[r] 
	& f_{0}(P_{0}) \ar[d] \ar[r]^{m} 
	& Q_{0} \ar[d] \ar[r] 
	& Q_{0}/f_{0}(P_{0}) \ar[d] \ar[r] 
	& 0 
\\
	&0  
	&0 
	&0 
	&
	}
\] 
with exact rows and columns. Now we list some of the  properties of $P_{1}$ and $i_{1}$.

\begin{itemize}
\item Since $f_{0}(P_{0})$ is projective, the first column is split exact, i.e., $P_{1}$ is a direct summand of $P_{0}$.
 \item $i_{1} : P_{1} \to I_{1}$ is an injective envelope.
 Indeed, a simple diagram chase shows that since $i_{0}$ is essential, so is $i_{1}$.
 \item Since $I_{1}$ is a nonzero module, the previous observation shows that so is $P_{1}$. 
 \item $P_{1}$ is a proper direct summand of $P_{0}$. If not, then the inclusion $P_{1} \to P_{0}$ becomes an equality, making both $i_{0}$ and $i_{1}$ injective envelopes of the same module, contradicting the maximality of the essential extension $i_{1}$ (since $Q_{0}$ is nonzero). 
\end{itemize} 
Thus, $P_{1}$ is a nonzero (projective) summand of $P_{0}$ whose injective envelope $I_{1}$ is not projective, and we can iterate the above construction. As a result, we get a strictly descending chain of direct summands $P_{0} \supset P_{1}  \supset \ldots $.
When it stabilizes, we have a short exact sequence 
$0 \to P_{n} \to I_{n} \to I_{n}/P_{n} \to 0$, where $P_{n}$ is a nonzero projective and $I_{n}$ is a stable injective.
\end{proof}

\begin{remark}
 It is not difficult to see that $\Lambda$ has the DCC on direct summands of $_{\Lambda}\Lambda$ if and only if it has the ACC on direct summands of $_{\Lambda}\Lambda$. Thus this class of rings includes left noetherian rings.
\end{remark}

\begin{proposition}
Let $\Lambda$ be a left hereditary ring with a conormal stable category. Then either the injective envelope of $_{\Lambda}\Lambda$ is projective or $\Lambda$ does not satisfy the DCC on direct summands of $_{\Lambda}\Lambda$.
\end{proposition}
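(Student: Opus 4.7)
The plan is to argue by contrapositive: assume $\Lambda$ is left hereditary, its stable category is conormal, \emph{and} $\Lambda$ satisfies the DCC on direct summands of $_{\Lambda}\Lambda$, and show that the injective envelope of $_{\Lambda}\Lambda$ must then be projective. Suppose for contradiction that it is not. Under the DCC assumption, Theorem~\ref{existence} applies and produces a nonzero projective $\Lambda$-module $P$ together with an injective envelope $i : P \hookrightarrow I$ in which $I$ is stable.

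The next step is to invoke Lemma~\ref{proj-embed} applied to the stable module $I$ and its nonzero projective submodule $i(P)$. The lemma tells us that the canonical projection $p : I \to I/i(P)$ represents a bimorphism $\underline{p}$ in $\Lambda\text{-}\Modst$ which is \emph{not} an isomorphism. This supplies the concrete obstruction we will play off against conormality.

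Finally, exploit the conormality hypothesis. Since $\underline{p}$ is an epimorphism, conormality upgrades it to a normal epimorphism, so $\underline{p}$ is a cokernel of some morphism $\underline{k} : X \to I$ in the stable category. Because $\underline{p}\,\underline{k}=0$ and $\underline{p}$ is a monomorphism, we conclude $\underline{k}=0$. The cokernel of a zero morphism is (isomorphic to) the identity on its codomain, forcing $\underline{p}$ to be an isomorphism, in direct contradiction with the conclusion drawn from Lemma~\ref{proj-embed}.

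The main conceptual work is already encapsulated in Theorem~\ref{existence} (whose construction requires the DCC) and in Lemma~\ref{proj-embed} (which uses Theorem~\ref{characterization} together with Corollary~\ref{epi-cases}); the argument above merely combines them. The only genuinely categorical observation needed is the routine fact that a normal epimorphism which is simultaneously a monomorphism in a pointed category is an isomorphism, so no serious obstacle is expected in the assembly.
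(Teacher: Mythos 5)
Your proposal is correct and follows essentially the same route as the paper: invoke Theorem~\ref{existence} under the DCC hypothesis to produce a nonzero projective $P$ with stable injective envelope $I$, apply Lemma~\ref{proj-embed} to get a non-isomorphism bimorphism $\underline{p}:I\to I/P$, and derive a contradiction with conormality. The only difference is that you spell out the routine verification that a monomorphism which is a normal epimorphism must be an isomorphism, which the paper states without proof.
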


\begin{proof}
Assume that the injective envelope of $\Lambda$ is not projective
and $\Lambda$ satisfies the DCC on the direct summands of $\Lambda$. By Theorem~\ref{existence}, there is a nonzero projective module $P$ with a stable injective envelope $I$. Let $p$ be the canonical projection $I \rightarrow I/P$. By Lemma~\ref{proj-embed}, $\underline{p}$ is both a monomorphism and an epimorphism. By assumption, $\underline{p}$ is a normal epimorphism. But being a monomorphism and a normal epimorphism, $\underline{p}$ must be an isomorphism, which contradicts Lemma~\ref{proj-embed}.
\end{proof}

In regard to the main result of this paper (Theorem~\ref{main} below), recall the well-known fact that any morphism in an abelian category factors as an epimorphism followed by a monomorphism, and such factorizations are functorial in a certain sense. A  conceptual context to discuss this phenomenon is provided by the notion of \texttt{factorization system} in a category, as introduced by Freyd-Kelly~\cite{FK}. It is defined as a pair of morphism classes $(\mathbb{E},\mathbb{M})$ such that $\mathbb{E}$ and $\mathbb{M}$ contain all isomorphisms, are closed under composition with them, and satisfy the following conditions:

\begin{enumerate}
 \item every morphism $\alpha$ admits an
$(\mathbb{E},\mathbb{M})$-factorization, i.e., there are morphisms
$e\in \mathbb{E}$ and $m\in \mathbb{M}$ with $\alpha=me$.

\item\label{arrow-down} for each $e \in \mathbb{E}$, $m\in \mathbb{M}$, and a commutative square
\[
\xymatrix
	{
	A \ar[r]^{e} \ar[d]_{\alpha} 
	& B \ar[d]^{\beta} \ar@{.>} [ld]_{\delta}
	\\ C \ar[r]^{m} 
	&D
	}
\]
of solid arrows, there exists a unique $\delta : B \to C$ with $\alpha=\delta e$ and $\beta=m\delta$.\footnote{For more on factorization systems, see \cite{AHS}, \cite{CHK}, and \cite{Z}.}
\end{enumerate}

\smallskip

In this language, the morphism pair $(Epi, Mono)$ is a factorization system in any abelian category (here, $Epi$ and $Mono$ denote the classes of epimorphisms and, respectively, monomorphisms). However, there are non-abelian categories where the pair $(Epi, Mono)$ forms a factorization system, too. 

Condition~(\ref{arrow-down}) above shows that the intersection of  $\mathbb{E}$ and $\mathbb{M}$ is contained in (and hence coincides with) the class of isomorphisms for any factorization system $(\mathbb{E,M})$. Combining this with Lemma~\ref{proj-embed} and Theorem~\ref{existence}, we now have

\begin{proposition}
Let $\Lambda$ be a left hereditary ring, and suppose the pair of morphism classes $(Epi,Mono)$ be a factorization system on the stable category. Then either the injective envelope of $_{\Lambda}\Lambda$ is projective or $\Lambda$ does not satisfy the DCC on 
direct summands of $_{\Lambda}\Lambda$. \qed
\end{proposition}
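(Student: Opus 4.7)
The plan is to argue by contradiction, exploiting the fact (recorded in the paragraph just before the statement) that for any factorization system $(\mathbb{E},\mathbb{M})$ the intersection $\mathbb{E}\cap\mathbb{M}$ coincides with the class of isomorphisms. Specializing to $(\mathbb{E},\mathbb{M})=(Epi,Mono)$, the hypothesis forces every bimorphism in $\Lambda$-$\Modst$ to be an isomorphism. So the goal reduces to producing, under the negation of the conclusion, a bimorphism in the stable category which is \emph{not} an isomorphism.

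Suppose, for contradiction, that the injective envelope of ${}_{\Lambda}\Lambda$ is not projective and that $\Lambda$ does satisfy the DCC on direct summands of ${}_{\Lambda}\Lambda$. Then the hypotheses of Theorem~\ref{existence} are met, yielding a nonzero projective $\Lambda$-module $P$ whose injective envelope $I$ is stable. In particular, $P$ embeds as a nonzero projective submodule of the stable module $I$, so the pair $(P,I)$ satisfies the assumptions of Lemma~\ref{proj-embed}.

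Applying Lemma~\ref{proj-embed} to the canonical projection $p\colon I\to I/P$, we conclude that $\underline{p}$ is a bimorphism in $\Lambda$-$\Modst$ which is not an isomorphism (indeed, it is not even a split monomorphism). On the other hand, by the observation recalled above, the factorization system assumption forces every bimorphism to be an isomorphism, contradicting the properties of $\underline{p}$. Hence the assumption is untenable, and either the injective envelope of ${}_{\Lambda}\Lambda$ is projective or $\Lambda$ fails the DCC on direct summands of ${}_{\Lambda}\Lambda$.

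The argument is essentially a one-step synthesis, so there is no real obstacle: all the work has been carried out in Theorem~\ref{existence} (the delicate iterative construction of a projective with stable injective envelope) and in Lemma~\ref{proj-embed} (the verification that $\underline{p}$ is simultaneously monic and epic but not an isomorphism). The only thing to check is the general categorical fact $\mathbb{E}\cap\mathbb{M}=\mathrm{Iso}$, which follows immediately from the unique-diagonal-fill condition~(\ref{arrow-down}): given a bimorphism $f\in Epi\cap Mono$, applying the diagonal fill to the square $f\circ 1 = 1\circ f$ produces a two-sided inverse of $f$.
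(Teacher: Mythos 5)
Your proposal is correct and follows exactly the route the paper intends: the paper's own ``proof'' is the sentence ``Combining this [the fact that $\mathbb{E}\cap\mathbb{M}$ consists of isomorphisms] with Lemma~\ref{proj-embed} and Theorem~\ref{existence}, we now have\dots'', which is precisely your synthesis of a non-isomorphic bimorphism from those two results. Your verification that the pair $(P,I)$ from Theorem~\ref{existence} meets the hypotheses of Lemma~\ref{proj-embed}, and your diagonal-fill argument for $Epi\cap Mono=\mathrm{Iso}$, are both accurate.
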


\section{Main theorem}\label{Cokernels}

Recall that a set of objects $S$ of a category $\mathbf{C}$ is said to be \texttt{cogenerating} if, for any pair of distinct morphisms $h,h':A\rightarrow B$, there is an object $X$ from $S$ and a morphism $s:B\rightarrow X$ such that $sh\neq sh'$. As is well-known, if $\mathbf{C}$ has products, then the equivalent condition is that for any object $C$ there are a set $I$  and a monomorphism $C\rightarrowtail \prod_{i\in I}X_{i}$, where the $X_{i}$ are in $S$. The next result is common knowledge, but for the convenience of the reader, we provide a proof.

\begin{lemma}\label{cocomplete}
Any complete well-powered category $\mathbf{C}$ with a cogenerating set is cocomplete.
\end{lemma}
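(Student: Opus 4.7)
The plan is to apply the Special Adjoint Functor Theorem (SAFT), already invoked in the paper (see the proof of \thmref{left-adjoint}), to the constant-diagram functor $\Delta_{\mathbf{J}} : \mathbf{C} \to \mathbf{C}^{\mathbf{J}}$ for each small category $\mathbf{J}$. By the very definition of colimit, $\Delta_{\mathbf{J}}$ admits a left adjoint if and only if $\mathbf{C}$ has all colimits of shape $\mathbf{J}$, and in that case the left adjoint is $\mathrm{colim}_{\mathbf{J}}$. Thus producing a left adjoint to $\Delta_{\mathbf{J}}$ for every small $\mathbf{J}$ is equivalent to cocompleteness of $\mathbf{C}$.

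First I would verify that $\Delta_{\mathbf{J}}$ preserves small limits. This is immediate: limits in $\mathbf{C}^{\mathbf{J}}$ are computed pointwise, so the limit of a small diagram of constant functors $\{\Delta_{\mathbf{J}}(C_{k})\}$ is the constant functor at $\lim_{k} C_{k}$, which is precisely $\Delta_{\mathbf{J}}(\lim_{k} C_{k})$ equipped with the evident cone.

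Next I would invoke SAFT in the form: a limit-preserving functor out of a complete, locally small, well-powered category with a small cogenerating set has a left adjoint. By hypothesis, $\mathbf{C}$ satisfies all four conditions, so $\Delta_{\mathbf{J}}$ admits a left adjoint $L_{\mathbf{J}}$. For $D \in \mathbf{C}^{\mathbf{J}}$, the object $L_{\mathbf{J}}(D)$ is then canonically the colimit of $D$; since $\mathbf{J}$ was arbitrary, $\mathbf{C}$ is cocomplete.

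The one genuine piece of work is SAFT itself, which the paper treats as a black box. Its substantive content is Freyd's initial-object theorem applied to each comma category $(D \downarrow \Delta_{\mathbf{J}})$: completeness of that comma category is inherited from $\mathbf{C}$, while well-powered-ness together with the cogenerating set is used to cut its class of objects down to a weakly initial set, via monomorphic factorizations through products of cogenerators. This is the only step that would require effort in a self-contained treatment; given that SAFT is already used in \thmref{left-adjoint}, I would simply cite it and conclude in a few lines.
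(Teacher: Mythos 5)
Your argument is exactly the paper's: apply the Special Adjoint Functor Theorem to the diagonal functor $\Delta : \mathbf{C} \to \mathbf{C}^{\mathbf{X}}$ for each small $\mathbf{X}$, noting it preserves limits since limits in the functor category are computed pointwise, and identify the resulting left adjoint with the colimit functor. The proposal is correct and matches the paper's proof, differing only in spelling out the SAFT hypotheses and its internal mechanism in more detail.
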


\begin{proof}
 For any small category $\mathbf{X}$, consider the diagonal functor $\Delta: \mathbf{C}\rightarrow \mathbf{C}^{\mathbf{X}}$, where~$\mathbf{C}^{\mathbf{X}}$ is the functor category. Obviously, $\Delta$ preserves limits. Thus, the special adjoint functor theorem implies that $\Delta$ has a left adjoint $L$. It is clear that $L(F)$ is the colimit of $F$ for any functor 
$F : \mathbf{X} \rightarrow \mathbf{C}$.
\end{proof}

\begin{proposition}\label{lp-rc}
Any left hereditary ring $\Lambda$ whose injective envelope is
projective, is left perfect and right coherent.
\end{proposition}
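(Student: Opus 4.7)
The plan is to invoke the Colby--Rutter structure theorem quoted in Remark~\ref{colby-rutter} and reduce the assertion to a classical fact about artinian rings. This converts a statement about products of projectives (which is what ``left perfect and right coherent'' really means, by Chase's theorem used in Proposition~\ref{products}) into a purely structural statement about the ring at hand.

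First I would apply Remark~\ref{colby-rutter} to write $\Lambda \cong \Lambda_1\times\cdots\times\Lambda_n$, where each $\Lambda_j$ is the ring of $D_j$-linear endomorphisms of a finite-dimensional $D_j$-space $V_j$ preserving a fixed finite chain of $D_j$-subspaces, for some division ring $D_j$. Since $\Lambda_j$ is a $D_j$-subspace of $\mathrm{End}_{D_j}(V_j)$, which itself has finite $D_j$-dimension, $\Lambda_j$ is a finitely generated $D_j$-module on both sides. Any one-sided $\Lambda_j$-ideal is a fortiori a $D_j$-submodule (using that scalar matrices from $D_j$ normalize $\Lambda_j$), hence has bounded $D_j$-length; this gives at once that each $\Lambda_j$ is two-sided artinian.

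Next I would invoke two classical implications: a left artinian ring is left perfect (the Jacobson radical is nilpotent and the semisimple quotient splits idempotents), and a right artinian ring is right noetherian by Hopkins--Levitzki, hence trivially right coherent. Thus every $\Lambda_j$ is left perfect and right coherent. Both properties are manifestly preserved by finite direct products (projective modules over the product correspond componentwise to projective modules over the factors, and the class of left perfect and right coherent rings is closed under finite products), and therefore $\Lambda$ itself is left perfect and right coherent.

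There is no real obstacle here; the only conceptual step is the appeal to Colby--Rutter, everything else is classical. An alternative route that avoids Colby--Rutter would be to verify Chase's condition directly, i.e.\ to prove that an arbitrary product of projective left $\Lambda$-modules is projective using left hereditariness together with the hypothesis on the injective envelope of ${}_\Lambda\Lambda$; but this seems to require essentially rebuilding the Colby--Rutter argument, so the structure-theoretic route is by far the shortest.
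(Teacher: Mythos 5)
Your argument is correct and follows essentially the same route as the paper: apply the Colby--Rutter structure theorem to reduce to a finite product of complete blocked triangular matrix rings over division rings, observe these are two-sided artinian, and then use that left artinian implies left perfect and right artinian implies right noetherian (hence right coherent). The only difference is that you spell out the artinian-ness of each factor in detail, which the paper leaves implicit.
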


\begin{proof}
By Remark~\ref{colby-rutter}, any such ring is a finite direct product of complete blocked triangular rings over a division ring. This implies that it is left and right artinian, and hence left and right perfect. Since any right artinian ring is right noetherian, it is right coherent, too.
\end{proof}

\begin{proposition}\label{cogenerating}
Let $\Lambda$ be a left hereditary, left perfect, and right coherent ring. Then $\Lambda$-$\Modst$ has a cogenerating set.
\end{proposition}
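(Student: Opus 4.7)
The plan is to use the characterization of a cogenerating set recalled at the start of this section: in a category with products, a set is cogenerating if and only if every object admits a monomorphism into a product of its members. By Corollary~\ref{small-products}, the stable category $\Lambda$-$\Modst$ has products under the hypotheses in question, so it suffices to exhibit a single object $\underline{W}$ into whose powers every object of $\Lambda$-$\Modst$ embeds.

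My candidate is $\underline{W}$, where $W$ is any injective cogenerator of $\Lambda$-$\Mod$; such a $W$ exists because $\Lambda$-$\Mod$ is a Grothendieck abelian category (concretely, one may take $W$ to be the injective envelope of $\bigoplus_I \Lambda/I$, with $I$ ranging over the left ideals of $\Lambda$). The defining property of an injective cogenerator gives, for every $\Lambda$-module $A$, a set $S$ (say $S = \Hom(A,W)$) and a monomorphism $A \rightarrowtail W^{S}$ in $\Lambda$-$\Mod$.

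The key step is to push this embedding down to the stable category, and for this I would invoke the two preservation properties of $\mathcal{Q}$ that are built into the hypotheses: since $\Lambda$ is left hereditary, Theorem~\ref{characterization} shows that $\mathcal{Q}$ preserves monomorphisms; since $\Lambda$ is left perfect and right coherent, Proposition~\ref{products} shows that $\mathcal{Q}$ preserves products, so $\underline{W^{S}}$ already represents the stable product of $|S|$ copies of $\underline{W}$. Applying $\mathcal{Q}$ to $A \rightarrowtail W^{S}$ therefore yields a monomorphism $\underline{A} \rightarrowtail \underline{W}^{S}$ in $\Lambda$-$\Modst$, whence $\{\underline{W}\}$ is cogenerating.

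I do not anticipate a genuine obstacle here: the argument is essentially a packaging of the two preservation properties of $\mathcal{Q}$ already established, and the three hypotheses of the proposition were bundled together precisely so that both apply simultaneously. The only mildly delicate point to double-check is that the monomorphism-in-$\Lambda$-$\Mod$ side of the argument is available for an \emph{arbitrary} module $A$ (and not just, say, a finitely generated one), which is why one takes the full injective cogenerator rather than a small ad hoc substitute.
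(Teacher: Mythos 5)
Your argument is correct and is essentially the paper's own proof spelled out: the authors likewise take a cogenerating object of $\Lambda$-$\Mod$, embed an arbitrary module into a power of it, and push the embedding down to the stable category using Proposition~\ref{products} (preservation of products) and Theorem~\ref{characterization} (preservation of monomorphisms). No gap.
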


\begin{proof}
By Corollary~\ref{hered-complete},  $\Lambda$-$\Modst$ is complete. Now it suffices to apply the well-known fact that the category of modules has a cogenerating object, together with Proposition~\ref{products} and Theorem~\ref{characterization}.
\end{proof}

Lemma~\ref{cocomplete}, Corollary~\ref{hered-complete}, Corollary~\ref{well-powered}, Proposition~\ref{lp-rc}, and Proposition~\ref{cogenerating} imply

\begin{proposition}\label{stable-cocomplete}
Let $\Lambda$ be a left hereditary ring, and suppose the injective envelope of $\Lambda$ is projective. Then $\Lambda$-$\Modst$ is cocomplete. \qed
\end{proposition}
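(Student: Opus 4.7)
The plan is to assemble the proposition as an immediate consequence of the preceding results, observing that the hypotheses trigger, in sequence, each of the structural properties required by Lemma~\ref{cocomplete}. First I would invoke Proposition~\ref{lp-rc} to upgrade the standing hypothesis on $\Lambda$ (left hereditary with projective injective envelope) to the stronger statement that $\Lambda$ is simultaneously left hereditary, left perfect, and right coherent. With these three ring-theoretic properties in hand, Corollary~\ref{hered-complete} then yields that $\Lambda$-$\Modst$ is complete.

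Next I would establish the remaining two ingredients needed to apply Lemma~\ref{cocomplete}. Corollary~\ref{well-powered}, whose hypothesis is precisely that $\Lambda$ is left hereditary and the injective envelope of $_{\Lambda}\Lambda$ is projective, gives the well-poweredness of $\Lambda$-$\Modst$. Meanwhile, Proposition~\ref{cogenerating}, applicable because $\Lambda$ satisfies all three of the properties extracted in the previous paragraph, provides a cogenerating set in $\Lambda$-$\Modst$.

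Finally, I would feed the collected data — completeness, well-poweredness, and the existence of a cogenerating set — into Lemma~\ref{cocomplete}, thereby obtaining the cocompleteness of $\Lambda$-$\Modst$. Since the proposition is essentially a bookkeeping step that aggregates results proved earlier, there is no genuine obstacle to overcome; the only substantive point is to check that the hypotheses of each invoked result are literally satisfied, which they are by direct inspection. The one place a reader might pause is in the chain that derives cogeneration in the stable category from the existence of a cogenerator in $\Lambda$-$\Mod$, but this is handled inside the proof of Proposition~\ref{cogenerating} via Proposition~\ref{products} and Theorem~\ref{characterization}, so nothing further is required here.
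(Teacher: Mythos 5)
Your proposal is correct and follows exactly the paper's own argument: the paper derives Proposition~\ref{stable-cocomplete} by citing Proposition~\ref{lp-rc}, Corollary~\ref{hered-complete}, Corollary~\ref{well-powered}, and Proposition~\ref{cogenerating}, and then feeding completeness, well-poweredness, and the cogenerating set into Lemma~\ref{cocomplete}. Nothing is missing.
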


Now we are ready to prove the main result of this paper.

\begin{theorem}\label{main}
Let $\Lambda$ be a left hereditary ring. The stable category  
$\Lambda$-$\Modst$ of left $\Lambda$-modules is abelian if and only if the injective envelope of $_{\Lambda}\Lambda$ is projective. The class of such rings consists precisely of finite direct products of complete blocked triangular matrix algebras over division rings.
\end{theorem}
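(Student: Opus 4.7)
The plan is to gather the machinery from the previous sections: almost everything the theorem asserts has already been proved, and what remains is bookkeeping, so the proof will essentially be a citation-driven assembly split into the two implications together with an appeal to Colby--Rutter for the structural description.

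For the ``only if'' direction I would argue in one line: if $\Lambda$-$\Modst$ is abelian then in particular every monomorphism is normal (being a kernel in an abelian category), so Theorem~\ref{normal} forces the injective envelope of $_{\Lambda}\Lambda$ to be projective.

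For the ``if'' direction, assume the injective envelope is projective. Additivity of $\Lambda$-$\Modst$, together with the existence of a zero object, was recorded in Section~\ref{Notation}. To conclude that the category is abelian I plan to verify the four remaining conditions: existence of kernels (Proposition~\ref{kernel}), existence of cokernels (contained in Proposition~\ref{stable-cocomplete}), normality of every monomorphism (the ``if'' part of Theorem~\ref{normal}), and normality of every epimorphism (Lemma~\ref{conormal}). The only categorical fact invoked at the end is the standard one that a pointed additive category with kernels and cokernels in which every monomorphism is a kernel and every epimorphism is a cokernel is abelian.

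The main obstacle in this assembly, conceptually, is cocompleteness: it unwinds through a nontrivial chain using Proposition~\ref{lp-rc} (such a $\Lambda$ is left perfect and right coherent, via Colby--Rutter), Corollary~\ref{hered-complete} (completeness), Corollary~\ref{well-powered} (well-poweredness), Proposition~\ref{cogenerating} (a cogenerating set), and Lemma~\ref{cocomplete} (special adjoint functor theorem), all of which are now in place. Once assembled, the final structural description is immediate from Remark~\ref{colby-rutter}, which records~\cite[Theorem~3.2]{CR}: the left hereditary rings whose injective envelope is projective are precisely finite direct products of complete blocked triangular matrix rings over division rings.
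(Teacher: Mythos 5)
Your proposal is correct and follows essentially the same route as the paper: the ``only if'' direction via Theorem~\ref{normal}, and the ``if'' direction by assembling Proposition~\ref{kernel}, Proposition~\ref{stable-cocomplete}, Theorem~\ref{normal}, and Lemma~\ref{conormal} against the standard characterization of abelian categories, with the structural description read off from Remark~\ref{colby-rutter}. The paper states the characterization as ``zero object, finitely complete and cocomplete, normal and conormal,'' which for an additive category is equivalent to the kernels-and-cokernels formulation you use.
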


\begin{proof}
Recall that a category is abelian if and only if it has a zero
object, is finitely complete and finitely cocomplete, and, moreover, is both normal and conormal. Now apply Theorem~\ref{normal}, Lemma~\ref{conormal}, Proposition~\ref{kernel}, and Proposition~\ref{stable-cocomplete}. The last assertion of the theorem follows from Remark~\ref{colby-rutter}.
\end{proof}


\begin{thebibliography}{99}

\bibitem{AHS}
J.~Ad\'amek, H.~Herrlich, and G.~Strecker,
Abstract and concrete categories. The joy of cats. John Wiley \& Sons, Inc., New York, 1990.

\bibitem{AF}
F.~W. Anderson and K.~R. Fuller,
\newblock {\em Rings and categories of modules}, volume~13 of {\em Graduate
  Texts in Mathematics}.
\newblock Springer-Verlag, New York, second edition, 1992.

\bibitem{Aus69}
M.~Auslander,
\newblock Comments on the functor {${\rm Ext}$}.
\newblock {\em Topology}, 8:151--166, 1969.


\bibitem{B}
F.~Borceux,
\newblock {\em Handbook of categorical algebra. 1}, volume~50 of {\em   Encyclopedia of Mathematics and its Applications}.
\newblock Cambridge University Press, Cambridge, 1994.
\newblock Basic category theory.

\bibitem{CHK}
C.~Cassidy, M.~H{\'e}bert, and G.~M. Kelly,
\newblock Reflective subcategories, localizations and factorization systems.
\newblock {\em J. Austral. Math. Soc. Ser. A}, 38(3):287--329, 1985.

\bibitem{Chase}
S.~U. Chase,
\newblock Direct products of modules.
\newblock {\em Trans. Amer. Math. Soc.}, 97:457--473, 1960.

\bibitem{CR}
R.~R.~Colby and E.~A.~Rutter, Jr.,  \emph{Generalizations of QF-3 algebras},  Trans. Amer. Math. Soc. \textbf{153} (1971), 371--386.

\bibitem{FK}
P.~J. Freyd and G.~M. Kelly,
\newblock Categories of continuous functors. {I}.
\newblock {\em J. Pure Appl. Algebra}, 2:169--191, 1972.

\bibitem{H2} A. Heller, The loop-space functor in homological algebra, {\em Trans. Amer. Math. Soc. } {\bf 96} (1960), 382-394.

\bibitem{H}
P.~Hilton,
\newblock {\em Homotopy theory and duality}.
\newblock Gordon and Breach Science Publishers, New York-London-Paris, 1965.

\bibitem{HR}
P.~J. Hilton and D.~Rees.
\newblock Natural maps of extension functors and a theorem of {R}. {G}. {S}wan.
\newblock {\em Proc. Cambridge Philos. Soc.}, 57:489--502, 1961.


\bibitem{M}
S.~Mac~Lane,
\newblock {\em Categories for the working mathematician}, volume~5 of {\em   Graduate Texts in Mathematics}.
\newblock Springer-Verlag, New York, second edition, 1998.

\bibitem{Mar}
A.~Martsinkovsky.
\newblock 1-torsion of finite modules over semiperfect rings.
\newblock {\em J. Algebra}, 324(10):2595--2607, 2010.

\bibitem{Mi}
B.~Mitchell.
\newblock {\em Theory of categories}.
\newblock Pure and Applied Mathematics, Vol. XVII. Academic Press, New York-London, 1965.

\bibitem{RT}
C.~M. Ringel and H.~Tachikawa,
\newblock {${\rm QF}$-$3$} rings.
\newblock {\em J. Reine Angew. Math.}, 272:49-72, 1974.


\bibitem{Z}
D.~Zangurashvili,
\newblock Several constructions for factorization systems.
\newblock {\em Theory Appl. Categ.}, 12:326--354, 2004.

\end{thebibliography}
\end{document}